\documentclass{article}

\usepackage{xspace}
\usepackage{marvosym}
\usepackage{microtype}
\microtypesetup{tracking,kerning,spacing}
\microtypecontext{spacing=nonfrench}

\usepackage{hyperref}
\usepackage{amsfonts,amsmath,amssymb,amsthm}
\usepackage{graphicx}
\usepackage{tikz}
\usepackage{pgfplots}
\usepackage{caption}
\usepackage[labelfont=rm]{subcaption}

\usepackage{dsfont}
\usepackage{wasysym}


\usepackage{array}
\usepackage{float}
\usepackage{multirow}
\usepackage{stackrel}

\newtheorem{theorem}{Theorem}[section]
\newtheorem{lemma}[theorem]{Lemma}
\newtheorem{proposition}[theorem]{Proposition}
\newtheorem{corollary}[theorem]{Corollary}
\newcounter{mainTheorem}

\newtheorem{maintheorem}[mainTheorem]{Theorem}
\theoremstyle{definition}
\newtheorem{definition}[theorem]{Definition}

\theoremstyle{remark}
\newtheorem{remark}[theorem]{Remark}

\newtheorem{question}[theorem]{Question}
\newtheorem{example}[theorem]{Example}

\newtheorem{conjecture}[theorem]{Conjecture}


\usetikzlibrary{positioning}

\tikzset{EdgeStyle/.style = {color=black!60, thick}}

\tikzset{VertexStyle/.style = {
    circle, fill=white, draw=black,
    text           = black,
    inner sep      = 2pt,
    outer sep      = 0pt,
    minimum size   = 10 pt}}

\tikzset{BoxVertex/.style = {rectangle, fill=white, draw=black, text = black, inner sep = 2.5pt, outer sep = 0pt, minimum size = 10pt}}

\newcommand{\bigraphtwofourcoord}[5]{
\coordinate (v1) at (#1,#2+1.5*#3);
\coordinate (v2) at (#1,#2+0.5*#3);
\coordinate (v3) at (#1,#2-0.5*#3);
\coordinate (v4) at (#1,#2-1.5*#3);

\coordinate (w1) at (#1-#5,#2+#4);
\coordinate (w2) at (#1-#5,#2-#4);
}
\newcommand{\bigraphtwofournodes}{
\node[BoxVertex] (1) at (v1){1};
\node[BoxVertex] (2) at (v2){2};
\node[BoxVertex] (3) at (v3){3};
\node[BoxVertex] (4) at (v4){4};

\node[VertexStyle] (11) at (w1){1};
\node[VertexStyle] (22) at (w2){2};
}

\newcommand{\bigraphthreetwocoord}[5]{
\coordinate (v1) at (#1,#2+#3);
\coordinate (v2) at (#1,#2);
\coordinate (v3) at (#1,#2-#3);

\coordinate (w1) at (#1-#5,#2+#4);
\coordinate (w2) at (#1-#5,#2-#4);
}
\newcommand{\bigraphthreetwonodes}{
\node[BoxVertex] (1) at (v1){1};
\node[BoxVertex] (2) at (v2){2};
\node[BoxVertex] (3) at (v3){3};

\node[VertexStyle] (11) at (w1){1};
\node[VertexStyle] (22) at (w2){2};
}

\newcommand{\bigraphthreefourcoord}[5]{
\coordinate (w1) at (#1-#5,#2+#4);
\coordinate (w2) at (#1-#5,#2);
\coordinate (w3) at (#1-#5,#2-#4);

\coordinate (v1) at (#1,#2+1.5*#3);
\coordinate (v2) at (#1,#2+0.5*#3);
\coordinate (v3) at (#1,#2-0.5*#3);
\coordinate (v4) at (#1,#2-1.5*#3);
}
\newcommand{\bigraphthreefournodes}{
\node[VertexStyle] (11) at (w1){1};
\node[VertexStyle] (22) at (w2){2};
\node[VertexStyle] (33) at (w3){3};

\node[BoxVertex] (1) at (v1){1};
\node[BoxVertex] (2) at (v2){2};
\node[BoxVertex] (3) at (v3){3};
\node[BoxVertex] (4) at (v4){4};
}

\newcommand{\bigraphthreefivecoord}[5]{
\coordinate (w1) at (#1-#5,#2+#4);
\coordinate (w2) at (#1-#5,#2);
\coordinate (w3) at (#1-#5,#2-#4);

\coordinate (v1) at (#1,#2+2*#3);
\coordinate (v2) at (#1,#2+#3);
\coordinate (v3) at (#1,#2);
\coordinate (v4) at (#1,#2-#3);
\coordinate (v5) at (#1,#2-2*#3);
}
\newcommand{\bigraphthreefivenodes}{
\node[VertexStyle] (11) at (w1){1};
\node[VertexStyle] (22) at (w2){2};
\node[VertexStyle] (33) at (w3){3};

\node[BoxVertex] (1) at (v1){1};
\node[BoxVertex] (2) at (v2){2};
\node[BoxVertex] (3) at (v3){3};
\node[BoxVertex] (4) at (v4){4};
\node[BoxVertex] (5) at (v5){5};
}

\newcommand{\bigraphtwofivecoord}[5]{
\coordinate (w1) at (#1-#5,#2+#4);
\coordinate (w2) at (#1-#5,#2-#4);

\coordinate (v1) at (#1,#2+2*#3);
\coordinate (v2) at (#1,#2+#3);
\coordinate (v3) at (#1,#2);
\coordinate (v4) at (#1,#2-#3);
\coordinate (v5) at (#1,#2-2*#3);
}
\newcommand{\bigraphtwofivenodes}{
\node[VertexStyle] (11) at (w1){1};
\node[VertexStyle] (22) at (w2){2};

\node[BoxVertex] (1) at (v1){1};
\node[BoxVertex] (2) at (v2){2};
\node[BoxVertex] (3) at (v3){3};
\node[BoxVertex] (4) at (v4){$\bar{1}$};
\node[BoxVertex] (5) at (v5){$\bar{2}$};
}

\newcommand{\bigraphthreesixcoord}[5]{
\coordinate (w1) at (#1-#5,#2+#4);
\coordinate (w2) at (#1-#5,#2);
\coordinate (w3) at (#1-#5,#2-#4);

\coordinate (v1) at (#1,#2+2.5*#3);
\coordinate (v2) at (#1,#2+1.5*#3);
\coordinate (v3) at (#1,#2+0.5*#3);
\coordinate (v4) at (#1,#2-0.5*#3);
\coordinate (v5) at (#1,#2-1.5*#3);
\coordinate (v6) at (#1,#2-2.5*#3);
}
\newcommand{\bigraphthreesixnodes}{
\node[VertexStyle] (11) at (w1){1};
\node[VertexStyle] (22) at (w2){2};
\node[VertexStyle] (33) at (w3){3};

\node[VertexStyle] (1) at (v1){1};
\node[VertexStyle] (2) at (v2){2};
\node[VertexStyle] (3) at (v3){3};
\node[BoxVertex] (4) at (v4){1};
\node[BoxVertex] (5) at (v5){2};
\node[BoxVertex] (6) at (v6){3};
}


\newcommand\transpose[1]{{#1}^{\top}}

\DeclareMathOperator{\supp}{supp}
\DeclareMathOperator{\conv}{conv}
\DeclareMathOperator{\sign}{sign}

\newcommand\RR{{\mathbb R}}

\newcommand\Topes{\mathcal{T}\!\!o}

\newcommand\Bcal{\mathcal{B}}

\newcommand\Ical{\mathcal{I}}
\newcommand\Mcal{\mathcal{M}}

\newcommand\modMcal{\widetilde{\mathcal{M}}}
\newcommand\ground{\mathsf{E}}
\newcommand\rows{\mathsf{R}}
\newcommand\augground{\widetilde{\mathsf{E}}}
\newcommand\omm{\mathfrak{O}}
\DeclareRobustCommand{\rchi}{{\mathpalette\irchi\relax}}
\newcommand{\irchi}[2]{\raisebox{\depth}{$#1\chi$}}
\newcommand{\ssimplex}{\triangle}

\global\long\def\ee{\mathbf{e}}

\global\long\def\supp{\mathrm{supp}}

\global\long\def\conv{\mathrm{conv}}

\global\long\def\orientedmatroid{\mathcal{M}}
\global\long\def\omtwo{\mathcal{N}}

\global\long\def\sign{\mathrm{sign}}

\global\long\def\neighbourhood{\mathcal{N}}
\global\long\def\ground{\mathsf{E}}
\global\long\def\rows{\mathsf{R}}

\global\long\def\Rin{\mathtt{Rin}(3,9)}


\makeatletter
\def\input@path{{./IMG1/}{./}}
\makeatother

\graphicspath{{IMG1/}}

\title{Oriented Matroids from Triangulations\\ of Products of Simplices}





\author{Marcel Celaya \\
  Technische Universit\"at Berlin, Institut f\"ur Mathematik \\
  \and
  Georg Loho \\
  London School of Economics, Department of Mathematics\\
  \and
  Chi Ho Yuen \\
  Brown University, Division of Applied Mathematics\\
}




\begin{document}


\maketitle

\begin{abstract}
  We introduce a construction of oriented matroids from a triangulation of a product of two simplices.
  For this, we use the structure of such a triangulation in terms of polyhedral matching fields.
  The oriented matroid is composed of compatible chirotopes on the cells in a matroid subdivision of the hypersimplex, which might be of independent interest.
  In particular, we generalize this using the language of matroids over hyperfields, which gives a new approach to construct matroids over hyperfields.
  A recurring theme in our work is that various tropical constructions can be extended beyond tropicalization with new formulations and proof methods.
\end{abstract}





\section{Introduction}

\subsection{Oriented Matroids and Matching Fields}

An {\em oriented matroid} is a combinatorial object abstracting linear dependence over $\RR$, and can be thought as a matroid with sign data, i.e., with signs attached to its bases that satisfy certain exchange axioms.
The standard example of an oriented matroid is given by the signs of the maximal minors of a real matrix, but not all oriented matroids are {\em realizable}, meaning they arise this way.
Oriented matroids play an important role in discrete and computational geometry as well as optimization, ranging from the study of geometric configurations to linear programming; they also make appearances in algebraic geometry and topology \cite[Chapter 1 \& 2]{BLSWZ:1993}.

A {\em matching field} is a collection of matchings of the complete bipartite graph $K_{\rows,\ground}\cong K_{d,n}$, one perfect matching between $\rows$ and $\sigma$ for every subset $\sigma\subset\ground$ of size $d$;
here $\rows$ is a set of size $d$ \footnote{The notation stands for any of: ``rows", ``realization coordinates", or ``rank".}, and $\ground$ is a set of size $n\geq d$ which we refer to as the {\em ground set}.
The simplest construction of a matching field is by taking all weight-maximal $(d\times d)$ matchings selected by a generic matrix on the complete bipartite graph, yielding a \emph{coherent} matching field. 
They were introduced by Sturmfels and Zelevinsky in \cite{SturmfelsZelevinsky:1993} to capture the combinatorics of the leading terms of maximal minors: a matching field is essentially given by choosing a term from each maximal minor of a generic matrix.
Along with follow-up works such as \cite{BernsteinZelevinsky:1993, LohoSmith:2020}, it was demonstrated that much of the Gr\"{o}bner theory of maximal minors can be deduced from the purely combinatorial {\em linkage} property.
This is analogous to the exchange property of matroids and leads to a generalization of coherent matching fields. 

Signed tropicalization shows that coherent matching fields induce realizable oriented matroids. 
Sturmfels and Zelevinsky noted this in their paper, thereby illustrating the aforementioned analogy between the linkage property and the general exchange axiom. 
Motivated by their remarks, we study the relation between linkage matching fields and oriented matroids, and show that the linkage property is not enough to guarantee an oriented matroid in Example~\ref{ex:SZ_example}.
Nevertheless, our first main result is that the statement is true for {\em polyhedral} matching fields.
Such matching fields arise from triangulations of the product of two simplices.
It is known that every coherent matching field is polyhedral while every polyhedral matching field is linkage, so our result extends the relation between coherent matching fields and realizable oriented matroids to an appropriate generality.
We elaborate more on why polyhedral matching fields form an interesting and important intermediate class of matching fields in the next subsections, but first we state our main theorem.

By fixing an arbitrary ordering for $\rows$ and $\ground$, every matching in a matching field can be thought as a permutation.
We define the sign of the matching as the sign of the permutation.
\begin{maintheorem}[Theorem~\ref{thm:main}] \label{thm:main+theorem+intro}
Given a polyhedral matching field $(M_\sigma)$ and a full $d\times n$ sign matrix~$A$, the sign map $\rchi:\binom{\ground}{d}\rightarrow\{+,-\}$ defined by
\begin{equation}
  \sigma\mapsto\sign(M_\sigma)\prod_{e\in M_\sigma}A_e \enspace
\end{equation}
is the chirotope of a uniform oriented matroid (Definition~\ref{def:chirotope}).
\end{maintheorem}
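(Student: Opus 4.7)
My plan follows the strategy previewed in the abstract: build $\rchi$ as a uniform chirotope by assembling compatible \emph{local} chirotopes on the cells of a matroid subdivision of the hypersimplex, and then verify the global Grassmann--Pl\"ucker axiom. Concretely, the target axiom is that for every $(d-1)$-subset $X = \{x_1, \ldots, x_{d-1}\}$ and every $(d+1)$-subset $Y = \{y_1, \ldots, y_{d+1}\}$ of $\ground$, the multiset
$$
\Bigl\{(-1)^i \, \rchi(X \cup \{y_i\}) \cdot \rchi(Y \setminus \{y_i\}) : i = 1, \ldots, d+1 \Bigr\}
$$
contains both $+$ and $-$. This is the uniform case of the signed exchange, since every $\rchi(\sigma)$ is a nonzero product of $\pm 1$'s, and hence suffices to define a chirotope of a uniform oriented matroid.

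The first step is to set up the local picture. Since $(M_\sigma)$ comes from a triangulation $T$ of $\Delta_\rows \times \Delta_\ground$, it induces a matroid subdivision $\Sigma$ of the hypersimplex $\Delta(d,n)$: each maximal cell $P \in \Sigma$ is a matroid polytope for a rank-$d$ matroid $\mathcal{N}_P$ on $\ground$, and every $\sigma \in \binom{\ground}{d}$ is a basis of some $\mathcal{N}_P$. For each cell $P$ I would define a local chirotope $\rchi_P$ on the bases of $\mathcal{N}_P$ using the same formula, with the matchings $M_\sigma$ provided by the triangulation combined with the sign entries of $A$. The key technical claim is that inside a single matroid cell the combinatorics of the matching field is compatible with a realization of $\mathcal{N}_P$, so that $\rchi_P$ is genuinely a chirotope of $\mathcal{N}_P$.

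With the local chirotopes in hand, I would establish two further facts. First, the family $\{\rchi_P\}$ is mutually \emph{compatible}: whenever $\sigma$ is a basis of both $\mathcal{N}_P$ and $\mathcal{N}_Q$, the values $\rchi_P(\sigma)$ and $\rchi_Q(\sigma)$ must agree, which holds by construction because $M_\sigma$ is determined by $\sigma$ independently of the cell. Second, given any Grassmann--Pl\"ucker configuration $X \cup Y$, I would show that a single cell $P$ (or a short chain of adjacent cells glued along their shared bases) already contains enough of the bases $X \cup \{y_i\}$ and $Y \setminus \{y_i\}$ that the global three-term sign exchange can be detected within $\rchi_P$. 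The local chirotope axiom on $\rchi_P$ then forces the global sign mixing required for $\rchi$, completing the argument.

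The main obstacle is the technical claim that each $\rchi_P$ is a chirotope: Example~\ref{ex:SZ_example} shows that the weaker linkage property already fails to yield oriented matroids, so the polyhedral hypothesis must be used essentially at exactly this point. I expect the proof to proceed by analyzing the simplices of $T$ lying over the cell $P$, extracting from them a cyclic/tree structure on a small bipartite graph that relates the matchings $M_\sigma$ for different bases $\sigma$ of $\mathcal{N}_P$; tracking how the entries of $A$ appear as $\sigma$ varies along this structure then reduces the Grassmann--Pl\"ucker relation for $\rchi_P$ to a genuine determinant identity on a well-chosen representation of $\mathcal{N}_P$. The other ingredients --- the restriction of the triangulation to the prism over $X \cup Y$, the compatibility gluing across cells, and the sign bookkeeping via the matching permutations --- should then follow from standard facts about triangulations of products of simplices and matroid polytopes.
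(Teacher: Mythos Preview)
Your high-level architecture matches the paper's: one proves Theorem~\ref{thm:main} via Lemma~\ref{lem:TM_real} (the restriction $\rchi_T$ to each cell of the matroid subdivision is a chirotope) followed by Theorem~\ref{thm:local_global} (a local-to-global principle over a matroid subdivision). However, you have the relative difficulty of the two steps inverted, and this leads to a genuine gap in the global half.

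The local step is not the obstacle. Each maximal cell of the matroid subdivision in Theorem~\ref{thm:matroid+subdivision+from+triangulation} is the transversal matroid $\Mcal(T)$ of a single spanning tree $T\in\Ical$ of the triangulation. Because $T$ is a tree, every basis of $\Mcal(T)$ admits a \emph{unique} perfect matching inside $T$, so any matrix $A(T)$ supported on $T$ with the prescribed signs has exactly one nonzero term in each maximal minor. Hence $\rchi_T$ is the chirotope realized by $A(T)$; this is the entirety of Lemma~\ref{lem:TM_real}. No delicate ``cyclic/tree structure relating matchings'' is needed, and the polyhedral hypothesis is not used here; it is used earlier, to guarantee that the trees form a triangulation (so that Proposition~\ref{prop:char-triang}(3) forces all matchings $M_\sigma$ to agree across trees) and that a matroid subdivision exists at all. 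Example~\ref{ex:SZ_example} fails not because any local $\rchi_T$ is bad, but because without polyhedrality different trees can contain \emph{different} matchings on the same $\sigma$, so the local chirotopes are incompatible.

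The real content is the global step, and your plan there is too vague to succeed as stated. You target the full $(d{+}1)$-term GP relation and hope that ``a single cell (or a short chain of adjacent cells) contains enough of the bases.'' For the full relation the involved $d$-sets can be spread across many cells, and no such containment holds. The paper instead invokes Proposition~\ref{prop:3GP}: since $\underline{\rchi}=U_{d,n}$ is a matroid, it suffices to verify the \emph{3-term} GP relation. The six bases appearing in a 3-term instance are exactly the vertices of an octahedron face of the hypersimplex, and the induced matroid subdivision of an octahedron is either trivial or one of the three splittings into two pyramids. In every case some maximal cell $M_i$ contains at least two antipodal pairs of vertices, so setting at most one of the three terms to zero still leaves a violation inside $\rchi_{M_i}$, contradicting Lemma~\ref{lem:TM_real}. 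This octahedron-face reduction is the step your proposal is missing.
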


Conceptually, our theorem says that instead of taking the signs of maximal minors, we can pick out only one term per determinant (carefully) and still obtain an oriented matroid.
This result indicates that the matchings in a polyhedral matching field take the role of signed bases of an oriented matroid.
We show that this correspondence goes even further by considering other special graphs associated with a polyhedral matching field, such as the \emph{linkage pd-graphs}, which are local unions of matchings, and the \emph{Chow pd-graphs}, which are the minimal transversals of the matchings~\cite{LohoSmith:2020, SturmfelsZelevinsky:1993}.
We describe how these graphs directly yield the signed circuits, signed cocircuits, and more generally covectors of the oriented matroid in Theorem~\ref{thm:main+theorem+intro}.
We also develop a notion of {\em duality} for matching fields that descends to the duality of oriented matroids via Theorem~\ref{thm:main+theorem+intro}.

\subsection{Connections to Complexity Questions} \label{sec:connection+complexity}

Our work adds a new piece to the connection between two major open complexity questions.
On one hand, Smale's 9th problem asks for a strongly polynomial algorithm in linear programming.
Oriented matroids play an important role for this as it is the framework for the simplex method~\cite{Fukuda:1982} which is still a natural candidate for such a strongly polynomial algorithm.
On the other hand, determining the winning states of a mean payoff game is a problem in NP~$\cap$~co-NP~\cite{GKK:1988} but no polynomial time algorithm is known.
The latter problem is also equivalent to deciding feasibility of a tropical linear program~\cite{AkianGaubertGuterman:2012} and it turns out that matching fields are the combinatorial framework for describing tropical linear programming~\cite{Loho:2016}.
From the viewpoint of mean payoff games, the matchings can be considered as partial strategies.
While the tropicalization of the simplex method~\cite{ABGJ-Simplex:A,Benchimol:2014} based on sign patterns already gave a connection between pivoting and strategy iteration, we directly derive the correspondence on the level of oriented matroids.
The interplay between classical and tropical linear programming has already lead to a proof that a wide class of interior point methods can not be strongly polynomial~\cite{AllamigeonBenchimolGaubertJoswig:2018}, and we elaborate further in Section~\ref{sec:conclusion} how our work can contribute to the understanding of this interplay. 

\subsection{Triangulations of  $\ssimplex_{d-1}\times\ssimplex_{n-1}$ and Matroid Subdivisions, with Signs}
Triangulations of a product of two simplices are fundamental in combinatorics and algebraic geometry~\cite{ArdilaBilley:2007,DeLoeraRambauSantos:2010, GelfandKapranovZelevinsky:1994}, and via the connection explained in Section~\ref{sec:connection+complexity} also in computer science, just to name a few references. 
{\em Matroid subdivisions} are fundamental objects emerging from the context of valuated matroids~\cite{DressWenzel:1992}, tropical linear spaces~\cite{SpeyerSturmfels:2004}, and discrete convex analysis~\cite{Murota:2003}. 

Drawing various motivations from the literature, we take a crucial new point of view and directly connect triangulations of products of simplices, matroid subdivisions and oriented matroids.
Besides questions from complexity, we were inspired by the connection between regular subdivisions of products of simplices and tropical convexity established in~\cite{DevelinSturmfels:2004}.
This already lead to the concept of a \emph{tropical oriented matroid} by Ardila and Develin~\cite{ArdilaDevelin:2009}, which is equivalent to the (not-necessarily regular) subdivisions of a product of two simplices~\cite{Horn1,OhYoo:2011}.
Polyhedral matching fields, or more precisely the {\em pointed} version thereof, give yet another equivalent description in the generic case of triangulations \cite{LohoSmith:2020, OhYoo-ME:2013}.

In Section 5 of their seminal paper, Ardila and Develin asked for a connection between oriented matroids and tropical oriented matroids, especially for exploring questions related to non-realizability.
The extraction of a realizable oriented matroid from a regular triangulations of $\ssimplex_{d-1}\times\ssimplex_{n-1}$ with signs is implicit in~\cite{Benchimol:2014}, but with our perspective, we finally manage to formalize such speculated connection in full generality. 
As asymptotically almost every triangulation of the product of two simplices is non-regular~\cite{Santos:2005}, our work allows a vast generalization. 
In particular, we derive Ringel's non-realizable oriented matroid of rank 3 on 9 elements from a non-regular triangulation of $\ssimplex_2 \times \ssimplex_5$ in Section~\ref{sec:ringel}, breaking new ground for the study of realizability of oriented matroids. 

Now we sketch our proof of Theorem~\ref{thm:main+theorem+intro}.
Every triangulation of $\ssimplex_{d-1}\times\ssimplex_{n-1}$ gives rise to a matroid subdivision of the hypersimplex by transversal matroid polytopes~\cite{HerrmannJoswigSpeyer:2014}. 
Considering matroid polytopes allows us to manipulate the sign map $\rchi$ geometrically; in particular, the restriction of $\rchi$ to each subpolytope is a realizable chirotope (Lemma~\ref{lem:TM_real}).
To finish the proof, we establish a general local-to-global principle (Theorem~\ref{thm:local_global}) for oriented matroids to show $\rchi$ is a chirotope from local information.
The interaction between oriented matroids and matroid subdivisions seems to be largely unexplored, although some very recent works on positive Dressians and positive tropical Grassmannians \cite{AHrkaniLamSpradlin:2020, LukowskiParisiWilliams:2020, SpeyerWilliams:2020} share some common ideas with ours.
Even more, Section~\ref{sec:compatibility+chirotopes} provides a starting point for understanding the interplay between sign patterns of chirotopes and compatible matroid subdivisions beyond the positivity condition for positroids. 

\subsection{Matroids over Hyperfields}

In \cite{BakerBowler:2019}, Baker and Bowler introduced the theory of {\em matroids over hyperfields}, which provides a common framework for many ``matroids with extra data'' theories.
The base of the theory is the notion of {\em hyperfields}, which can be thought as ordinary fields with multi-valued addition.
Besides unifying parallel notions and propositions among these theories, the new language allows one to explain features in a particular matroid theory using the property of its base hyperfield, thereby finding the correct generality for those features to hold.
We do the same in Theorem~\ref{thm:main_HF}, where we extend Theorem~\ref{thm:main+theorem+intro} to all matroid theories whose base hyperfield has the {\em inflation property}, first introduced in the literature with different motivations \cite{Anderson:2019, Massouros:1991}.
Unlike the special case of oriented matroids, Theorem~\ref{thm:main_HF} is new even for coherent matching fields.
In view of the connection between hyperfields and tropicalization (see the survey by Viro \cite{Viro:2010}), we ask if there can be tropicalization theories along this direction beyond the degree one case.

Taking maximal minors of a matrix over a field yields a Grassmann--Pl\"{u}cker function.
Hence one can construct matroids over common hyperfields, such as the sign hyperfield or the tropical hyperfield (corresponding to oriented matroids and valuated matroids, respectively), using the observation that those hyperfields are images of fields under hyperfield morphisms.
However, such an approach does not work in general: not every hyperfield comes from a field (indeed, the inflation property was introduced in \cite{Massouros:1991} to provide such examples), while taking determinant over hyperfields is usually multi-valued.
Hence, as a contribution to the theory, our work gives a new approach to construct matroids over hyperfields.
Moreover, we use hyperfield theory to provide a weaker statement (Corollary~\ref{coro:main_HF}) for {\em any} hyperfield that our construction gives a matroid over a possibly perturbed base hyperfield.

\subsection{Organization of the Paper}

In Section~\ref{sec:background}, we collect essential definitions and background for the central objects in this paper.
We prove Theorem~\ref{thm:main+theorem+intro} in Section~\ref{sec:main}, together with other axiomatic connections between matching fields and oriented matroids.
The study of the uniform oriented matroids arising via our construction is initiated in Section~\ref{sec:OM_MF}.
We extend our work to matroids over hyperfields in Section~\ref{sec:hyperfield}, where we also give a brief introduction to the theory.
Finally, we conclude with a few open problems in Section~\ref{sec:conclusion}.

The follow-up of our paper will focus on the topological aspect of our construction.
In particular, we derive a topological representation of the oriented matroid in Theorem~\ref{thm:main+theorem+intro} using the polyhedral structure of the triangulation and Viro's patchworking.
Such a construction generalizes the example in Section~\ref{sec:ringel}.

\section{Background} \label{sec:background}

Throughout the paper, we fix a ground set $\ground$ of size $n$ and a set $\rows$ of size $d \leq n$.
We often identify $\ground,\rows$ with $[n] = \{1,2,\dots,n\}$ and $[d]$, hence fixing an ordering for them; we use $\{+,-,0\}$ and $\{1,-1,0\}$ for signs interchangeably, and we adopt the ordering $+,->0$ of signs.

\subsection{Oriented Matroids} \label{sec:OM}

We refer the reader to \cite{BLSWZ:1993} for a comprehensive survey on oriented matroids.
As in the case of matroids, there are multiple equivalent axiom systems for oriented matroids, but we mainly use the following definition.

\begin{definition} \label{def:chirotope}
A {\em chirotope} on $\ground$ of rank $d$ is a non-zero, alternating map $\rchi:\ground^d\rightarrow\{+,-,0\}$ that satisfies the {\em Grassmann--Pl\"{u}cker (GP) relation}:\\
For any $x_1,\ldots,x_{d-1},y_1,\ldots, y_{d+1}\in \ground$, the $d+1$ expressions 
\begin{equation*}
(-1)^k \rchi(x_1,\ldots,x_{d-1},y_k)\rchi(y_1,\ldots,\widehat{y_k},\ldots,y_{d+1}), \qquad k=1,\ldots, d+1,
\end{equation*}
either contain both a positive and a negative term, or are all zeros.
An {\em oriented matroid} on $\ground$ of rank $d$ is specified by a chirotope up to global sign change.
\end{definition}

With the ordering on $\ground$, we can specify a chirotope by its values over $\binom{\ground}{d}$ using the alternating property, here $\rchi(\sigma):=\rchi(\sigma_1,\ldots,\sigma_d)$ where $\sigma=\{\sigma_1<\ldots<\sigma_d\}$.

\begin{example} \label{ex:realizable+oriented+matroid}
 A chirotope is the generalization of the signs of maximal minors of a real matrix (oriented matroids coming from this way are said to be {\em realizable}).
  More precisely, let $A \in \RR^{d \times n}$ be a rectangular matrix with rank~$d$.
  Then
  \[
  \rchi(j_1,j_2,\dots,j_d) := \sign\det\left(a^{(j_1)},a^{(j_2)},\dots,a^{(j_d)}\right) ,
  \]
  where $a^{(j_k)}$ denotes columns of $A$, is the chirotope of an oriented matroid of rank $d$.
  Note that we can normalize $A$ to the form $(I_{d,d}|B)$ for some $B \in \RR^{d \times (n-d)}$ by multiplying with the inverse of a full-rank submatrix of $A$. 
 This incurs only a global sign change of the chirotope. 
\end{example}


To state an important characterization of chirotopes, we briefly recall the definition of a matroid.

\begin{definition} \label{def:matroid}
  Let $\Bcal(M)$ be a non-empty subset of $\binom{\ground}{d}$.
  Setting ${\bf e}_B:=\sum_{i\in B}{\bf e}_i\in\mathbb{R}^\ground$ for each $B\in\Bcal(M)$, $M$ is a \emph{matroid} with \emph{bases} $\Bcal(M)$ if the convex hull of $\{{\bf e}_B:B\in\Bcal(M)\}$ has only edge directions ${\bf e}_i -{\bf e}_j$ for unit vectors ${\bf e}_i, {\bf e}_j$.  
  The polytope itself is the \emph{matroid polytope} of $M$ and the \emph{rank} of $M$ is $d$.

  $\binom{\ground}{d}$ itself is the collection of bases of a matroid, known as the {\em uniform} matroid $U_{d,n}$; in such case the matroid polytope is the {\em hypersimplex}.
\end{definition}

Now if we know in advance that there is a matroid underneath, we can check whether a sign map is a chirotope locally \cite[Theorem~3.6.2]{BLSWZ:1993}.

\begin{proposition}\label{prop:3GP}
Suppose $\rchi:\ground^d\rightarrow\{+,-,0\}$ is an alternating map.
Then $\rchi$  satisfies the GP relation if and only if $\underline{\rchi}:=\{\sigma\in\binom{\ground}{d}: \rchi(\sigma)\neq 0\}$ is the collection of bases of some matroid and that $\rchi$ satisfies the {\em $3$-term GP relation}:\\
For any $x_1,x_2,y_1,y_2\in \ground,X:=\{x_3,\ldots,x_d\}\subset \ground$, the three expressions
\begin{equation}\label{eq:3GP}
\rchi(x_1,x_2,X)\rchi(y_1,y_2,X), \rchi(x_1,y_1,X)\rchi(y_2,x_2,X), \rchi(x_1,y_2,X)\rchi(x_2,y_1,X),
\end{equation}
either contain both a positive and a negative term, or are all zeros.
\end{proposition}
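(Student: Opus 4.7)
The approach is to prove the two implications separately.

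The forward direction is a specialization argument. For the 3-term GP, given parameters $x_1, x_2, y_1, y_2$ and $X = \{z_3, \ldots, z_d\}$, I would invoke the full GP relation with the $x$-sequence $(x_1, z_3, z_4, \ldots, z_d)$ of length $d-1$ and the $y$-sequence $(x_2, y_1, y_2, z_3, z_4, \ldots, z_d)$ of length $d+1$. The last $d-2$ terms in the expansion vanish by the alternating property (each first factor repeats some $z_i$), leaving exactly the three terms of (\ref{eq:3GP}) up to global signs that are absorbed by the alternating property. That $\underline{\rchi}$ is a matroid basis system follows from a similar specialization: given bases $B_1, B_2$ and $e \in B_1 \setminus B_2$, apply full GP with $x$-sequence an ordering of $B_1 \setminus \{e\}$ and $y$-sequence starting with $e$ followed by an ordering of $B_2$. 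The $k=1$ term is non-zero, so some term with $k \geq 2$ must also be non-zero; the corresponding $y_k$ lies in $B_2 \setminus B_1$ and witnesses symmetric basis exchange $B_1 - e + y_k, B_2 - y_k + e \in \underline{\rchi}$.

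For the reverse direction, assume $\underline{\rchi}$ is the basis collection of a matroid and that the 3-term GP holds. I would proceed by induction on $d$, with base case $d = 2$ being the 3-term GP itself. For the inductive step, fix a full GP configuration $(x_1,\ldots,x_{d-1};y_1,\ldots,y_{d+1})$. The strategy is to relate the $d+1$ terms pairwise via 3-term GPs on common sets of size $d-2$ obtained by removing one $x_i$ from the $x$-sequence. Matroid basis exchange ensures that enough such pairings exist to connect all non-zero terms into a sign-coherent system, forcing both $+$ and $-$ to appear (or all terms zero).

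The main obstacle will be the case analysis when several terms of the full GP vanish. Each zero breaks the chain of local 3-term GPs, and one must argue that the surviving non-zero terms still realize both signs. A clean way to handle this is to reduce to the generic case by matroid basis exchange: whenever $\rchi(x_1,\ldots,x_{d-1},y_k) = 0$, one replaces some $x_i$ by an element that restores the missing term, tracking the effect via additional 3-term GPs. This sign bookkeeping is exactly the content of \cite[Theorem~3.6.2]{BLSWZ:1993}, whose induction and case analysis I would follow rather than reprove from scratch.
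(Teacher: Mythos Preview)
The paper does not give its own proof of this proposition; it simply cites \cite[Theorem~3.6.2]{BLSWZ:1993}. Your proposal ultimately defers to the same reference for the hard direction, so the two are aligned. Your sketch of the forward direction (specializing the full GP relation to obtain the 3-term relation, and reading off symmetric basis exchange) is correct and standard. Your outline of the reverse direction via induction on $d$ is plausible in spirit but, as you yourself note, the case analysis when several GP terms vanish is where the work lies; since you explicitly hand this off to \cite[Theorem~3.6.2]{BLSWZ:1993}, there is nothing to add beyond what the paper already does.
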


\subsection{Matching Fields}

A $(d,n)$-\emph{matching field} is a collection of perfect matchings $M_\sigma$'s on bipartite node sets $\rows \sqcup \sigma$, one for each $d$-subset $\sigma$ of $\ground$.
We use the terminology \emph{pd-graphs} to denote subgraphs of $K_{d,n}$ which are composed of edges of the matchings. \footnote{The `pd' stands for primal-dual as the two node classes of these bipartite graphs give rise to a dual pair of variables in the context of tropical linear programming. }.
For example, they can arise as unions (for linkage pd-graphs) or transversals (for Chow pd-graphs) of matchings. 


\begin{example}[{Diagonal matching field~\cite{SturmfelsZelevinsky:1993}}]
  The \emph{diagonal matching field} has exactly the edges $\{(1,j_1),\dots,(d,j_d)\}$ in the matching on the ordered subset $j_1 < j_2 < \dots < j_d$ of $\ground$.
  For $d=2$ and $n=4$ this is depicted in Figure~\ref {fig:four+two+matching+field}.
\end{example}

\begin{figure}[htb]
  \begin{center}
	\resizebox{\textwidth}{!}{
  \begin{tikzpicture}[scale=0.6]

\bigraphtwofourcoord{0}{0}{1.2}{0.6}{2.5};
\draw[EdgeStyle] (v1) to (w1);
\draw[EdgeStyle] (v2) to (w2);
\bigraphtwofournodes

\bigraphtwofourcoord{4}{0}{1.2}{0.6}{2.5};
\draw[EdgeStyle] (v1) to (w1);
\draw[EdgeStyle] (v3) to (w2);
\bigraphtwofournodes

\bigraphtwofourcoord{8}{0}{1.2}{0.6}{2.5};
\draw[EdgeStyle] (v1) to (w1);
\draw[EdgeStyle] (v4) to (w2);
\bigraphtwofournodes

\bigraphtwofourcoord{12}{0}{1.2}{0.6}{2.5};
\draw[EdgeStyle] (v2) to (w1);
\draw[EdgeStyle] (v3) to (w2);
\bigraphtwofournodes

\bigraphtwofourcoord{16}{0}{1.2}{0.6}{2.5};
\draw[EdgeStyle] (v2) to (w1);
\draw[EdgeStyle] (v4) to (w2);
\bigraphtwofournodes

\bigraphtwofourcoord{20}{0}{1.2}{0.6}{2.5};
\draw[EdgeStyle] (v3) to (w1);
\draw[EdgeStyle] (v4) to (w2);
\bigraphtwofournodes
\end{tikzpicture}
	}
  \caption{The diagonal $(2,4)$-matching field.}
  \label{fig:four+two+matching+field}
	\end{center}
\end{figure}
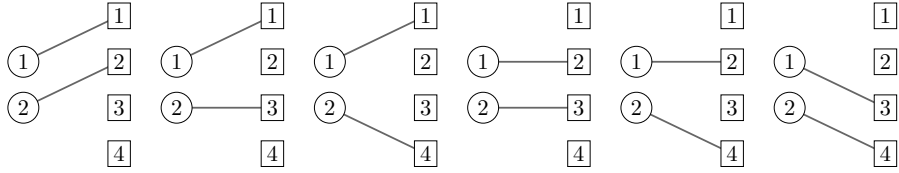

\begin{example}[Coherent matching field] \label{ex:coherent+matching+field}
  A $(d,n)$-matching field is \emph{coherent} if there is a \emph{generic} matrix $M \in \RR^{d \times n}$ such that the matching on $\rows \sqcup \sigma$ is the weight-maximal perfect matching induced by the entries of $M$ as weights on $K_{d,n}$.
  The genericity here means that the weight-maximal matching is unique.
  A diagonal matching field is coherent as it is induced by the weight matrix $((i-1) \cdot (j-1))_{(i,j) \in [d] \times [n]}$.
\end{example}

\begin{example}[Linkage matching field] \label{ex:linkage+matching+field}
  A matching field is \emph{linkage} if it fulfills a local compatibility condition.
  Namely, for every $(d+1)$-subset $\tau$ of $\ground$, the union of the matchings on $\tau$ is a spanning tree on $\rows \sqcup \tau$; such a tree is a \emph{linkage pd-graph}.
  These graphs were called `linkage covectors' in~\cite{LohoSmith:2020} but we show later that they can give rise to signed circuits. 
  Note that each node in $\rows$ of a linkage pd-graph has to have degree $2$ by a counting argument.
  Figure~\ref{fig:non+right+linkage+field} shows a linkage matching field which is the pointed extension (see Section~\ref{sec:matroid+subdivions+from+triang}) of the non-linkage matching field depicted in Figure~\ref{fig:non+linkage+field}.
  It was used in~\cite[Proposition~2.3]{SturmfelsZelevinsky:1993} to show the existence of non-coherent linkage matching fields.
\end{example}

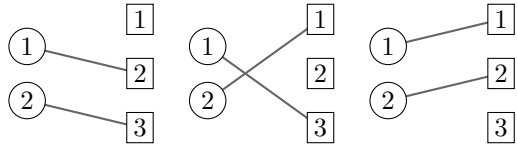
\begin{figure}[htb]
  \begin{center}
  \begin{tikzpicture}[scale=0.6]

\bigraphthreetwocoord{0}{0}{1.2}{0.6}{2.5};
\draw[EdgeStyle] (v2) to (w1);
\draw[EdgeStyle] (v3) to (w2);
\bigraphthreetwonodes

\bigraphthreetwocoord{4}{0}{1.2}{0.6}{2.5};
\draw[EdgeStyle] (v1) to (w2);
\draw[EdgeStyle] (v3) to (w1);
\bigraphthreetwonodes

\bigraphthreetwocoord{8}{0}{1.2}{0.6}{2.5};
\draw[EdgeStyle] (v1) to (w1);
\draw[EdgeStyle] (v2) to (w2);
\bigraphthreetwonodes

\end{tikzpicture}
  \caption{This is the smallest matching field which is not linkage. }
  \label{fig:non+linkage+field}
	\end{center}
\end{figure}

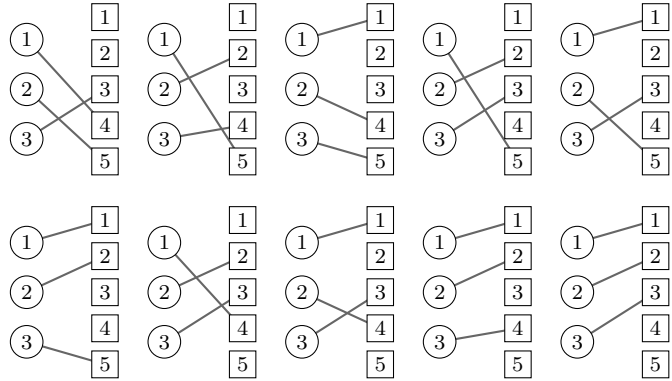
\begin{figure}[htb]
  \begin{center}
  \begin{tikzpicture}[scale=0.3]

  \footnotesize
  
  \bigraphthreefivecoord{0}{0}{1.6}{2.2}{3.47};
  \draw[EdgeStyle] (w1) to (v4);
  \draw[EdgeStyle] (w2) to (v5);
  \draw[EdgeStyle] (w3) to (v3);
  \bigraphthreefivenodes

  \bigraphthreefivecoord{6.1}{0}{1.6}{2.2}{3.47};
  \draw[EdgeStyle] (w1) to (v5);
  \draw[EdgeStyle] (w2) to (v2);
  \draw[EdgeStyle] (w3) to (v4);
  \bigraphthreefivenodes

  \bigraphthreefivecoord{12.2}{0}{1.6}{2.2}{3.47};
  \draw[EdgeStyle] (w1) to (v1);
  \draw[EdgeStyle] (w2) to (v4);
  \draw[EdgeStyle] (w3) to (v5);
  \bigraphthreefivenodes

  \bigraphthreefivecoord{18.3}{0}{1.6}{2.2}{3.47};
  \draw[EdgeStyle] (w1) to (v5);
  \draw[EdgeStyle] (w2) to (v2);
  \draw[EdgeStyle] (w3) to (v3);
  \bigraphthreefivenodes

  \bigraphthreefivecoord{24.4}{0}{1.6}{2.2}{3.47};
  \draw[EdgeStyle] (w1) to (v1);
  \draw[EdgeStyle] (w2) to (v5);
  \draw[EdgeStyle] (w3) to (v3);
  \bigraphthreefivenodes

  
  \bigraphthreefivecoord{0}{-9}{1.6}{2.2}{3.47};
  \draw[EdgeStyle] (w1) to (v1);
  \draw[EdgeStyle] (w2) to (v2);
  \draw[EdgeStyle] (w3) to (v5);
  \bigraphthreefivenodes

  \bigraphthreefivecoord{6.1}{-9}{1.6}{2.2}{3.47};
  \draw[EdgeStyle] (w1) to (v4);
  \draw[EdgeStyle] (w2) to (v2);
  \draw[EdgeStyle] (w3) to (v3);
  \bigraphthreefivenodes

  \bigraphthreefivecoord{12.2}{-9}{1.6}{2.2}{3.47};
  \draw[EdgeStyle] (w1) to (v1);
  \draw[EdgeStyle] (w2) to (v4);
  \draw[EdgeStyle] (w3) to (v3);
  \bigraphthreefivenodes

  \bigraphthreefivecoord{18.3}{-9}{1.6}{2.2}{3.47};
  \draw[EdgeStyle] (w1) to (v1);
  \draw[EdgeStyle] (w2) to (v2);
  \draw[EdgeStyle] (w3) to (v4);
  \bigraphthreefivenodes

  \bigraphthreefivecoord{24.4}{-9}{1.6}{2.2}{3.47};
  \draw[EdgeStyle] (w1) to (v1);
  \draw[EdgeStyle] (w2) to (v2);
  \draw[EdgeStyle] (w3) to (v3);
  \bigraphthreefivenodes

\end{tikzpicture}
  \caption{This is the smallest linkage matching field which is not polyhedral. }
  \label{fig:non+right+linkage+field}
	\end{center}
\end{figure}

\subsection{Triangulations of $\ssimplex_{d-1}\times\ssimplex_{n-1}$ and Polyhedral Matching Fields} \label{sec:triangulation}

We give a brief introduction to the necessary polyhedral notions and refer the reader to~\cite{DeLoeraRambauSantos:2010} for more details. 
We denote the $(k-1)$-simplex by $\ssimplex_{k-1}$, that is the convex hull of $k$ affinely independent points. 
Even if we are mainly interested in combinatorial properties, we use the standard embedding $\ssimplex_{k-1} = \conv\{{\bf e}_i \colon i \in [k]\} \subset \RR^k$.
The product $\ssimplex_{d-1}\times\ssimplex_{n-1}$ of a $(d-1)$-simplex and an $(n-1)$-simplex is the convex hull
\[
\ssimplex_{d-1}\times\ssimplex_{n-1} = \conv\{ ({\bf e}_i,{\bf e}_j) \colon i \in [d],j \in [n] \}\subset\mathbb{R}^d\times\mathbb{R}^n \enspace .
\]

A \emph{triangulation} of $\ssimplex_{d-1}\times\ssimplex_{n-1}$ is a collection of full-dimensional simplices $\mathcal{T}$ whose vertices form a subset of the vertices of $\ssimplex_{d-1}\times\ssimplex_{n-1}$, such that
\begin{enumerate}
\item (Union) the union of all simplices in $\mathcal{T}$ is $\ssimplex_{d-1}\times\ssimplex_{n-1}$, 
\item (Intersection) two simplices in $\mathcal{T}$ intersect in a common face. 
\end{enumerate}

Each simplex in $\mathcal{T}$ gives rise to a subgraph of the complete bipartite graph on the vertex set $\rows \sqcup \ground$ by identifying a vertex $({\bf e}_i,{\bf e}_j)$ with an edge.
Rephrasing the definition of a triangulation in terms of graphs leads to the following characterization.

\begin{proposition}[{\cite[Proposition 7.2]{ArdilaBilley:2007}}] \label{prop:char-triang} \leavevmode
  A set of graphs on the bipartite node set $\rows \sqcup \ground$ encodes the maximal simplices of a triangulation of $\ssimplex_{d-1}\times\ssimplex_{n-1}$ if and only if:
  \begin{enumerate} 
  \item Each graph is a spanning tree.
  \item For each tree $G$ and each edge $e$ of $G$, either $G - e$ has an isolated node or there is another tree $H$ containing $G - e$.
  \item \label{item:compatibility} If two trees $G$ and $H$ contain perfect matchings on $I \sqcup J$ for $I \subseteq \rows$ and $J \subseteq \ground$ with $|J| = |I|$, then the matchings agree. 
  \end{enumerate}
\end{proposition}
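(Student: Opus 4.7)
\emph{Proof proposal.} The plan rests on the standard identification between affinely independent subsets of vertices of $\ssimplex_{d-1}\times\ssimplex_{n-1}$ and acyclic subgraphs of $K_{d,n}$: the vertex $({\bf e}_i,{\bf e}_j)$ is identified with the edge $(i,j)$, and $k$ such vertices are affinely independent if and only if the corresponding edges form a forest. In particular, a full-dimensional simplex corresponds exactly to a spanning tree of $K_{\rows,\ground}$, since it must have $d+n-1$ vertices (matching the dimension) and a forest with that many edges on $d+n$ nodes is necessarily a spanning tree. This identification already yields condition~1 of the proposition.

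For the forward direction, I would argue as follows. A facet of $\sigma_T$ is $\sigma_{T-e}$ for some edge $e \in T$, and $T-e$ splits into two components. If one component is a singleton $\{v\}$, then $\sigma_{T-e}$ lies in the proper face of $\ssimplex_{d-1}\times\ssimplex_{n-1}$ obtained by deleting $v$; otherwise $\sigma_{T-e}$ is an interior facet, so by the triangulation property (each interior facet is shared by exactly two maximal cells) there exists another tree $H$ containing $T-e$. This gives condition~2. For condition~3, the key geometric input is that the barycenter of any perfect matching on $I\sqcup J$ with $|I|=|J|$ always equals $\tfrac{1}{|I|}({\bf e}_I,{\bf e}_J)$, independent of the matching. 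If trees $G,H$ contain perfect matchings $M_G,M_H$ on $I\sqcup J$, then this common barycenter lies in the relative interior of both $\sigma_{M_G}$ and $\sigma_{M_H}$. Since $\sigma_G \cap \sigma_H$ must be a common face of both simplices, the faces $\sigma_{M_G}$ and $\sigma_{M_H}$ must coincide, hence $M_G = M_H$.

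The converse direction is the main obstacle, as one now has to reconstruct the geometric triangulation from purely graph-theoretic data. My plan is to show that the simplicial complex $\mathcal{T}$ assembled from the given trees is a pure $(d+n-2)$-dimensional pseudomanifold whose boundary matches that of $\ssimplex_{d-1}\times\ssimplex_{n-1}$. Condition~2 is precisely the pseudomanifold property along interior facets. Condition~3 is used to verify that any two simplices $\sigma_G$ and $\sigma_H$ meet in a common face: if their relative interiors met nontrivially, projecting to the smallest face $\ssimplex(I)\times\ssimplex(J)$ of the ambient polytope containing the overlap and invoking the barycenter observation from above would produce perfect matchings on $I\sqcup J$ inside $G$ and $H$, forcing agreement. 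The final step is to combine this with a standard topological argument: a pure pseudomanifold embedded in the ball $\ssimplex_{d-1}\times\ssimplex_{n-1}$, with the correct boundary behavior, must fill the ball, which supplies the Union condition and completes the proof.
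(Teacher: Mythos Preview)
The paper does not prove this proposition at all: it is stated with a citation to \cite[Proposition~7.2]{ArdilaBilley:2007} and used as a black box, with no proof environment following it. So there is nothing in the paper to compare your attempt against.

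That said, a brief assessment of your sketch on its own merits: the forward direction is fine and essentially matches the standard argument. The converse is where the real work lies, and your outline has the right shape but two soft spots. First, the intersection argument (``project to the smallest face $\ssimplex(I)\times\ssimplex(J)$ containing the overlap and produce perfect matchings'') is not quite complete as stated: you need to explain why a point in $\sigma_G\cap\sigma_H$ that is not in a common face forces the existence of two \emph{different} matchings on some $I\sqcup J$ inside $G$ and $H$; the usual route is to look at a cycle in $G\cup H$ with edges alternating between the two trees, which decomposes into two matchings on the same bipartite vertex set. Second, the ``standard topological argument'' that a pseudomanifold with the right boundary fills the ball is doing a lot of work and deserves at least a reference or a volume/degree argument; condition~2 alone does not guarantee connectedness of the dual graph, which you implicitly need. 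If you intend to include a proof, both of these points would need to be tightened.
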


  Starting from a triangulation of $\ssimplex_{d-1}\times\ssimplex_{n-1}$, we collect all $\rows$-saturating matchings (those covering all nodes in $\rows$) that appear as subgraphs of the trees corresponding to the simplices.
  By (3) of Proposition~\ref{prop:char-triang}, there is at most one matching for each $d$-subset of $\ground$. 
  Considering the barycentre of the subpolytope corresponding to $K_{\rows,\sigma}$ shows that there is indeed a matching for each $d$-subset \cite[Proposition~2.5]{LohoSmith:2020}.

\begin{definition}[Polyhedral matching field] \label{ex:polyhedral+matching+field}
  A \emph{polyhedral matching field} is the collection of all $\rows$-saturating matchings appearing in the trees encoding a triangulation of $\ssimplex_{d-1}\times\ssimplex_{n-1}$.
\end{definition}

 Every coherent matching field is the polyhedral matching field extracted from a regular triangulation of $\ssimplex_{d-1}\times\ssimplex_{n-1}$, induced by the same weight matrix.
  On the other hand, all polyhedral matching fields are linkage \cite[Section~3.1]{LohoSmith:2020}.
  We see in Example~\ref{ex:SZ_example} that the linkage matching field in Figure~\ref{fig:non+right+linkage+field} is not polyhedral.

\begin{example}
  Figure~\ref{fig:four+two+trees} depicts four trees comprising a triangulation of $\ssimplex_{1}\times\ssimplex_{3}$.
  The $2 \times 2$ matchings arising in these trees yield the matching field shown in Figure~\ref{fig:four+two+matching+field}.
\end{example}

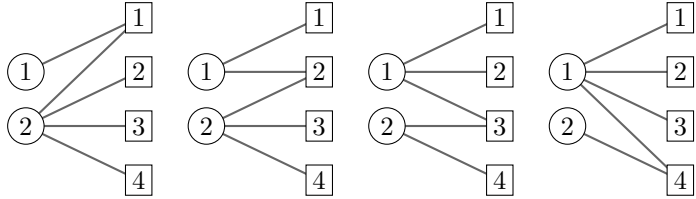
\begin{figure}[htb]
  \begin{center}
  \begin{tikzpicture}[scale=0.6]

\bigraphtwofourcoord{0}{0}{1.2}{0.6}{2.5};
\draw[EdgeStyle] (v1) to (w1);
\draw[EdgeStyle] (v1) to (w2);
\draw[EdgeStyle] (v2) to (w2);
\draw[EdgeStyle] (v3) to (w2);
\draw[EdgeStyle] (v4) to (w2);
\bigraphtwofournodes

\bigraphtwofourcoord{4}{0}{1.2}{0.6}{2.5};
\draw[EdgeStyle] (v1) to (w1);
\draw[EdgeStyle] (v2) to (w1);
\draw[EdgeStyle] (v2) to (w2);
\draw[EdgeStyle] (v3) to (w2);
\draw[EdgeStyle] (v4) to (w2);
\bigraphtwofournodes

\bigraphtwofourcoord{8}{0}{1.2}{0.6}{2.5};
\draw[EdgeStyle] (v1) to (w1);
\draw[EdgeStyle] (v2) to (w1);
\draw[EdgeStyle] (v3) to (w1);
\draw[EdgeStyle] (v3) to (w2);
\draw[EdgeStyle] (v4) to (w2);
\bigraphtwofournodes

\bigraphtwofourcoord{12}{0}{1.2}{0.6}{2.5};
\draw[EdgeStyle] (v1) to (w1);
\draw[EdgeStyle] (v2) to (w1);
\draw[EdgeStyle] (v3) to (w1);
\draw[EdgeStyle] (v4) to (w1);
\draw[EdgeStyle] (v4) to (w2);
\bigraphtwofournodes

\end{tikzpicture}
  \caption{Trees corresponding to maximal simplices in a triangulation of $\ssimplex_{1}\times\ssimplex_{3}$. }
  \label{fig:four+two+trees}
	\end{center}
\end{figure}

Using~\cite[Theorem~3.16]{LohoSmith:2020}, one can construct the collection $\Ical:=\Ical(\ssimplex_{d-1}\times\ssimplex_{n-1})$ of trees such that each node in $\rows$ has degree at least $2$ by iteratively taking linkage pd-graphs. 
On the other hand, for $n > d$, all the matchings of the matching field occur in at least one of these trees as one can see from the barycentre construction. 
These trees have another nice property which will be useful later. 

\begin{lemma} \label{lem:tranversal+full+rank}
Every tree $T$ in $\Ical$ contains an $\rows$-saturating matching.
\end{lemma}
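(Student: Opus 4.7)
The plan is to verify Hall's marriage condition for $T$, viewed as a bipartite graph on vertex set $\rows \sqcup S$, where $S \subseteq \ground$ denotes the set of $\ground$-side vertices incident to $T$. Since $T$ is a tree and every node of $\rows$ has degree at least $2$ in $T$ by the defining property of $\Ical$, this will be straightforward to check by an edge-counting argument.

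Suppose for contradiction that some $R' \subseteq \rows$ violates Hall's condition, i.e., $|N_T(R')| < |R'|$, where $N_T(R')$ denotes the set of neighbors of $R'$ in $T$. Consider the induced subgraph $T[R' \cup N_T(R')]$. All edges of $T$ incident to $R'$ are contained in this induced subgraph (any neighbor of a vertex in $R'$ belongs to $N_T(R')$), so the degree of each $r \in R'$ inside $T[R' \cup N_T(R')]$ equals its degree in $T$, which is at least $2$. Summing over $R'$, the induced subgraph has at least $2|R'|$ edges.

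On the other hand, $T[R' \cup N_T(R')]$ is a forest, being a subgraph of the tree $T$, so it contains at most $|R'| + |N_T(R')| - 1$ edges. Combining these two inequalities yields $2|R'| \leq |R'| + |N_T(R')| - 1$, that is $|N_T(R')| \geq |R'| + 1$, which contradicts the assumption $|N_T(R')| < |R'|$. Hence Hall's condition holds for every $R' \subseteq \rows$, and by Hall's theorem $T$ contains an $\rows$-saturating matching.

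No serious obstacle is expected: the only input is the degree hypothesis in the definition of $\Ical$, and everything else is a one-line forest edge count combined with the standard Hall criterion.
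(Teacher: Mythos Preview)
Your proof is correct. The edge-counting argument is sound: since every edge incident to $R'$ lands in $N_T(R')$, the induced subgraph carries at least $2|R'|$ edges, while the forest bound caps it at $|R'|+|N_T(R')|-1$, forcing $|N_T(R')|\ge|R'|+1$.

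The paper takes a different, more direct route: it observes that since every $\rows$-node has degree at least $2$, every leaf of the tree $T$ must lie in $\ground$. One then picks such a leaf $\ell$, includes the edge $(i,\ell)$ to its unique neighbour $i\in\rows$ in the matching, deletes both vertices, and repeats. The invariant that all remaining $\rows$-nodes keep degree $\ge 2$ is preserved because $\ell$ was only adjacent to $i$, so no surviving $\rows$-node loses an edge. Compared with your Hall-based argument, the paper's proof is constructive and yields an explicit matching via leaf-stripping; yours is slightly slicker as a one-shot contradiction and avoids the inductive bookkeeping, but it relies on Hall's theorem as a black box. Both hinge only on the degree hypothesis and the acyclicity of $T$, and neither is materially harder than the other.
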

\begin{proof}
  Since each node in $\rows$ has degree bigger than $1$, there is a leaf $\ell$ of $T$ in~$\ground$.
  Include the incident edge $(i,\ell)$ in the matching and iterate the procedure with $T-\{i,\ell\}$.
  This is possible as all remaining nodes in $\rows$ are still of degree at least~$2$.
\end{proof}

\smallskip

A special class of polyhedral matching fields are the \emph{pointed} ones (following the terminology in~\cite{SturmfelsZelevinsky:1993}), which comprise the full information of a triangulation of $\ssimplex_{d-1}\times\ssimplex_{n-1}$.
We augment the ground set $\ground$ by a copy $\widetilde{\rows}$ of $\rows$ to obtain a ground set $\augground$ of size $n+d$, and we set all elements of $\widetilde{\rows}$ to be smaller than all elements of $\ground$.
To take the full information of all trees into account, for each tree in $\mathcal{T}$ we add edges between each node in $\rows$ to its copy $\widetilde{\rows}$, yielding a set $\widetilde{\mathcal{T}}$ of trees on $\rows \sqcup \augground$.

\begin{definition} \label{def:pointed+polyhedral}
  The \emph{pointed polyhedral matching field} associated with a triangulation $\mathcal{T}$ of $\ssimplex_{d-1}\times\ssimplex_{n-1}$ is the collection of $\rows$-saturating matchings on $\rows \sqcup \augground$ in the trees in $\widetilde{\mathcal{T}}$. 
\end{definition}

\begin{figure}[htb]
  \begin{center}
  \begin{tikzpicture}[scale=0.3]

  \footnotesize
  
  \bigraphtwofivecoord{0}{0}{1.6}{1.2}{3.47};
  \draw[EdgeStyle] (w1) to (v1);
  \draw[EdgeStyle] (w2) to (v2);
  \bigraphtwofivenodes

  \bigraphtwofivecoord{6.1}{0}{1.6}{1.2}{3.47};
  \draw[EdgeStyle] (w1) to (v1);
  \draw[EdgeStyle] (w2) to (v3);
  \bigraphtwofivenodes

  \bigraphtwofivecoord{12.2}{0}{1.6}{1.2}{3.47};
  \draw[EdgeStyle] (w1) to (v4);
  \draw[EdgeStyle] (w2) to (v1);
  \bigraphtwofivenodes

  \bigraphtwofivecoord{18.3}{0}{1.6}{1.2}{3.47};
  \draw[EdgeStyle] (w1) to (v1);
  \draw[EdgeStyle] (w2) to (v5);
  \bigraphtwofivenodes

  \bigraphtwofivecoord{24.4}{0}{1.6}{1.2}{3.47};
  \draw[EdgeStyle] (w1) to (v3);
  \draw[EdgeStyle] (w2) to (v2);
  \bigraphtwofivenodes


  \bigraphtwofivecoord{0}{-9}{1.6}{1.2}{3.47};
  \draw[EdgeStyle] (w1) to (v4);
  \draw[EdgeStyle] (w2) to (v2);
  \bigraphtwofivenodes

  \bigraphtwofivecoord{6.1}{-9}{1.6}{1.2}{3.47};
  \draw[EdgeStyle] (w1) to (v2);
  \draw[EdgeStyle] (w2) to (v5);
  \bigraphtwofivenodes

  \bigraphtwofivecoord{12.2}{-9}{1.6}{1.2}{3.47};
  \draw[EdgeStyle] (w1) to (v4);
  \draw[EdgeStyle] (w2) to (v3);
  \bigraphtwofivenodes

  \bigraphtwofivecoord{18.3}{-9}{1.6}{1.2}{3.47};
  \draw[EdgeStyle] (w1) to (v3);
  \draw[EdgeStyle] (w2) to (v5);
  \bigraphtwofivenodes

  \bigraphtwofivecoord{24.4}{-9}{1.6}{1.2}{3.47};
  \draw[EdgeStyle] (w1) to (v4);
  \draw[EdgeStyle] (w2) to (v5);
  \bigraphtwofivenodes
  
\end{tikzpicture}
  \caption{The pointed polyhedral matching field encoding the triangulation of the prism in Figure~\ref{fig:regular-subdivision-prism} as visualization of Definition~\ref{def:pointed+polyhedral}. }
  \label{fig:pointed+polyhedral+for+triangulation}
	\end{center}
\end{figure}
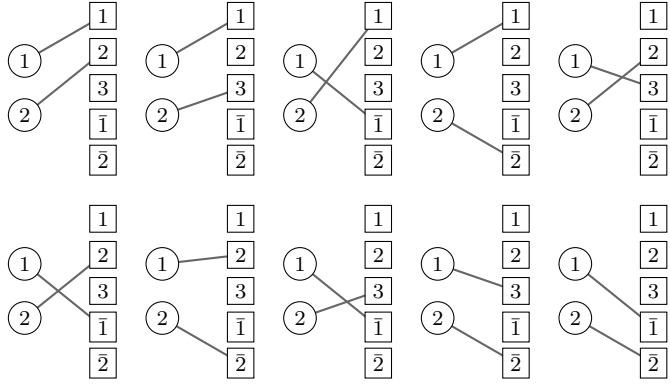

The discussion after \cite[Theorem 3.16]{LohoSmith:2020} shows that the latter construction actually yields a correspondence between triangulations and matching fields.

One can see that pointed polyhedral matching fields are actually polyhedral matching fields by extending the original triangulation of $\ssimplex_{d-1}\times\ssimplex_{n-1}$ to a triangulation of $\ssimplex_{d-1}\times\ssimplex_{n+d-1}$ by using a placing triangulation as in~\cite[Lemma~4.3.2]{DeLoeraRambauSantos:2010}.
Note that, on the other hand, each polyhedral matching field is a sub-matching field of a pointed polyhedral matching field.
We use both points of view as they allow different constructions as we see in Section~\ref{sec:topes+covectors} and the follow-up of our paper.
It is similar to the relation between transversal matroids and fundamental transversal matroids, and it is reminiscent of the correspondence between matroids and linking systems~\cite{Schrijver:1979}.

\bigskip


Cutting appropriately through a triangulation of $\ssimplex_{d-1}\times\ssimplex_{n-1}$ yields a \emph{fine mixed subdivision} of $n\ssimplex_{d-1}$.
This is formalized as the \emph{Cayley trick}, see~\cite{Santos:2005}. 
A direct way to identify the polyhedral pieces in such a subdivision of $n\ssimplex_{d-1}$ is the following.
For each tree $G$ corresponding to a simplex in the triangulation $\mathcal{T}$, we form the Minkowski sum
\[
\sum_{j \in \ground} \conv \{ {\bf e}_i \colon i \in \mathcal{N}_G(j) \} \enspace ,
\]
where $\neighbourhood_G(j)$ is the neighbourhood of an element $j \in \ground$ in $G$.
The collection of these Minkowski sums tiles the dilated simplex $n\ssimplex_{d-1}$.

The dual polyhedral complexes to these mixed subdivisions were introduced as \emph{tropical pseudohyperplane arrangements} in~\cite{ArdilaDevelin:2009}.
This draws from the correspondence between tropical hyperplane arrangements and regular subdivisions of products of simplices established in~\cite{DevelinSturmfels:2004}.
After starting from an axiomatic study of \emph{tropical oriented matroids} in~\cite{ArdilaDevelin:2009}, it was subsequently shown that these combinatorial objects are indeed \emph{cryptomorphic} to subdivisions of $\ssimplex_{d-1}\times\ssimplex_{n-1}$ and tropical pseudohyperplane arrangements, partially in~\cite{OhYoo:2011} and finished in~\cite{Horn1}.

\begin{figure}[htb]
  \centering
  \includegraphics[scale=0.5]{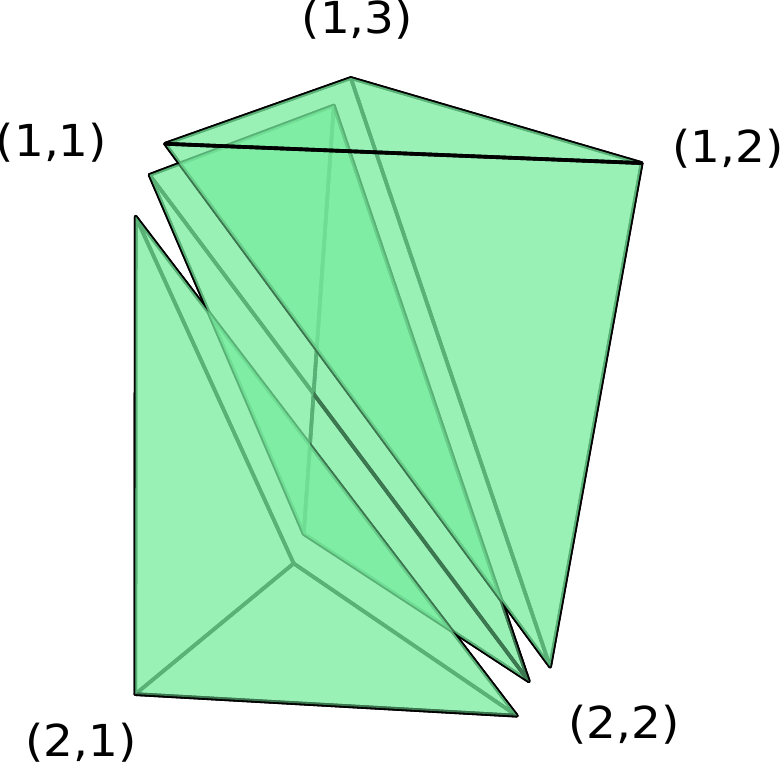}
  \caption{A triangulation of $\ssimplex_1\times\ssimplex_2$. The vertices are labeled by the corresponding edges in $K_{2,3}$. This picture was created with \texttt{polymake} \cite{DMV:polymake}.}
  \label{fig:regular-subdivision-prism}
\end{figure}

\section{Polyhedral Matching Fields Induce\\ Oriented Matroids}\label{sec:main}

\subsection{Matroid Subdivisions from Triangulations} \label{sec:matroid+subdivions+from+triang}

We first briefly recall the definition of a matroid subdivision.

\begin{definition} 
A collection of matroids is a {\em matroid subdivision} of a matroid $M$ if they have the same ground set and rank as $M$, and their matroid polytopes (together with their faces) form a subdivision of the matroid polytope of $M$.
\end{definition}

In the proof of~\cite[Proposition~2.2]{Speyer:2008}, the argument for (1) $\Rightarrow$ (2) does not depend on the actual tropical Pl{\"u}cker vector but only on the induced subdivision of the octahedron.
Together with the equivalence with (3), this shows the following result, which also applies to non-regular subdivisions. 
The proof of our main theorem in the next section is in a similar spirit. 

\begin{proposition}
A polyhedral subdivision of matroid polytope is a matroid subdivision if and only if the induced subdivision of each octahedron face is a matroid subdivision.
\end{proposition}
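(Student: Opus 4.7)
The plan is to use the characterization in Definition~\ref{def:matroid}: a polytope whose vertices lie among those of the hypersimplex is a matroid polytope if and only if its edges are parallel to some $\ee_i - \ee_j$. Verifying that $\Pcal$ is a matroid subdivision then amounts to showing that no cell of $\Pcal$ has an edge of the form $\ee_A - \ee_B$ with $|A| = |B| \geq 2$.

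The \emph{only if} direction parallels the $(1) \Rightarrow (2)$ step in \cite[Proposition~2.2]{Speyer:2008}: if $\Pcal$ is a matroid subdivision and $O$ is an octahedron face of $P_M$, then the cells of $\Pcal$ restricted to $O$ are intersections of matroid polytopes with the matroid polytope $O$, which themselves remain matroid polytopes.

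For the \emph{if} direction, I would argue by contradiction. Suppose some cell $C \in \Pcal$ has an edge from $v_1 = \ee_{X \cup A}$ to $v_2 = \ee_{X \cup B}$ with $X, A, B$ pairwise disjoint and $|A| = |B| = k \geq 2$. In the base case $k = 2$, write $A \cup B = \{a,b,c,d\}$. Combining matroid exchange applied to the pair $v_1, v_2$ with the observation that $v_1 v_2 \subseteq C \subseteq P_M$ cannot be an edge of $P_M$ --- so must lie in a face of $P_M$ of dimension at least two --- I would show that the six vectors $\ee_{X \cup \{i,j\}}$ for $\{i,j\} \subset A \cup B$ all lie in $P_M$. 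Their convex hull is then an octahedron face $O$ of $P_M$, and the cell of the induced subdivision on $O$ that contains $v_1 v_2$ still has this forbidden edge, contradicting the hypothesis.

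The main obstacle is reducing the case $k \geq 3$ to $k = 2$: for large $k$, the segment $v_1 v_2$ need not lie in any single octahedron face of $P_M$. My strategy is to slice $C$ by affine hyperplanes of the form $\sum_{i \in S} x_i = \text{const}$ for subsets $S$ overlapping both $A$ and $B$ properly; such hyperplanes respect the face structure of matroid polytopes, and the bad edge descends to one with strictly smaller $k$ in a lower-dimensional face of $P_M$. Iterating drives $k$ down to $2$, whereupon the base-case argument yields the desired contradiction. The delicate point is ensuring that each slicing step transports the octahedron-face hypothesis to the smaller face of $P_M$, so that the inductive argument goes through.
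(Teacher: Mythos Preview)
Your reduction step has a genuine gap. A hyperplane $\sum_{i\in S} x_i = c$ cuts out a \emph{face} of $P_M$ only when $c$ is extremal; for intermediate values the slice passes through the interior of $P_M$, and its vertices are no longer $0/1$ vectors. Thus the intersection is not a matroid polytope, it is not a face of $P_M$, and there is no smaller ambient matroid polytope to which the octahedron-face hypothesis can be transported. If instead you restrict to a genuine face by fixing coordinates in $I\cap J$ to~$1$ and coordinates outside $I\cup J$ to~$0$, the hypothesis does carry over, but $k=|A|=|B|$ is unchanged, so no progress is made. In short, there is no evident inductive mechanism that drives $k$ down to~$2$ while keeping you inside a face of $P_M$; this is exactly the ``delicate point'' you flag, and it is where the argument breaks.

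Your base case also has a gap for general matroid polytopes. From the two bases $X\cup A$ and $X\cup B$ the exchange axiom only yields \emph{one} further antipodal pair among the six sets $X\cup\{i,j\}$, not all four. If only four of the six are bases, the relevant face of $P_M$ is a square, not an octahedron, and the hypothesis says nothing about it; one can then subdivide that square along the bad diagonal without violating any octahedron condition. This issue disappears when $P_M$ is the hypersimplex---which is the only case the paper actually needs---but your write-up claims the general statement.

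For comparison, the paper does not give an independent proof: it observes that Speyer's argument for $(1)\Rightarrow(2)$ in \cite[Proposition~2.2]{Speyer:2008} uses only the combinatorics of the induced octahedron subdivisions, not the lifting heights, and therefore applies verbatim to non-regular subdivisions; the equivalence with~$(3)$ then finishes. Speyer's route is not an induction on $k$ but a direct $2$-face analysis around the bad edge that produces a single violated octahedron.
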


The construction of matroid subdivisions from triangulations of products of simplices goes back to~\cite{Kapranov:1993} and was further refined in~\cite{FinkRincon:2015,HerrmannJoswigSpeyer:2014,Rincon:2013}.
We take a more combinatorial point of view described in~\cite{LohoSmith:2020}.
Given a subgraph $G$ of $K_{\rows,\ground}$, the {\em transversal matroid} $\Mcal(G)$
is the matroid whose bases are the subsets of $\ground$ which are matched by some $\rows$-saturating matching of $G$.
Starting from the set of all trees corresponding to the full-dimensional simplices in a triangulation of $\ssimplex_{d-1}\times\ssimplex_{n-1}$, we restrict to the subset $\Ical$ of all trees where each node in $\rows$ has degree at least $2$.
By Lemma~\ref{lem:tranversal+full+rank}, the transversal matroid of each such tree has rank $d$.

\begin{theorem}[{\cite[\S 5.2]{FinkRincon:2015},\cite[\S 2]{HerrmannJoswigSpeyer:2014}, \cite{Kapranov:1993,Rincon:2013}}] \label{thm:matroid+subdivision+from+triangulation}
  The matroids $\Mcal(T)$ ranging over all $T \in \Ical$ form
  a matroid subdivision of $U_{d,n}$. 
\end{theorem}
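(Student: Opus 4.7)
The plan is to verify the three defining features of a matroid subdivision directly: each piece $P_{\Mcal(T)}$ is a matroid polytope, the pieces cover the matroid polytope of $U_{d,n}$ (the hypersimplex), and any two pieces intersect in a common face. The first feature is built in: by Lemma~\ref{lem:tranversal+full+rank}, every $T\in\Ical$ admits an $\rows$-saturating matching, so the transversal matroid $\Mcal(T)$ has rank $d$ on ground set $\ground$, and its polytope sits inside the hypersimplex.

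For the covering property, I would exploit the correspondence between the polyhedral matching field and the triangulation: for every $B\in\binom{\ground}{d}$, the matching $M_B$ of the matching field occurs in at least one tree $T\in\Ical$, as recorded by the barycentre argument noted after Lemma~\ref{lem:tranversal+full+rank}. Hence $B$ is a basis of $\Mcal(T)$, so $\ee_B\in P_{\Mcal(T)}$ and every vertex of the hypersimplex is reached. The key input for pairwise intersections is Proposition~\ref{prop:char-triang}(\ref{item:compatibility}): whenever $B$ is a common basis of $\Mcal(T)$ and $\Mcal(T')$, the $\rows$-saturating matchings on $\rows\sqcup B$ inside $T$ and $T'$ must coincide. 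This matching coherence is the combinatorial fingerprint of the two simplices $\Delta_T$ and $\Delta_{T'}$ meeting cleanly inside $\ssimplex_{d-1}\times\ssimplex_{n-1}$, and it is what one uses to identify $P_{\Mcal(T)}\cap P_{\Mcal(T')}$ with the polytope of the common-basis matroid via the edge-direction criterion of Definition~\ref{def:matroid}.

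The main obstacle I anticipate is promoting this vertex-level coherence to a bona fide face statement: even when two pieces share the same set of basis vertices, one must rule out overlap through their interiors and certify that the intersection is a true affine face of each. The cleanest route is to invoke the local criterion stated immediately before the theorem, which reduces matroid-ness of a polyhedral subdivision to the analogous statement on each octahedral $2$-face of the hypersimplex. Restricting the triangulation to the neighborhood of such an octahedron---obtained by fixing a $(d-2)$-subset $X\subset\ground$ and letting two further elements vary over a $4$-subset---produces only finitely many local configurations, each of which can be checked directly. This local-to-global philosophy mirrors the one used later in the paper for the chirotope condition and keeps the combinatorial bookkeeping tractable.
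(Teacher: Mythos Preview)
The paper does not supply its own proof of this theorem: it is stated as a known result, attributed to Kapranov, Rinc\'on, Fink--Rinc\'on, and Herrmann--Joswig--Speyer, with no argument given in the text. So there is no in-paper proof to compare your proposal against.

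On its own merits, your proposal has a genuine gap. The octahedron criterion you plan to invoke (the proposition immediately preceding the theorem) takes as hypothesis that one already has a \emph{polyhedral subdivision} of the matroid polytope, and then characterizes when that subdivision is a matroid subdivision. You have not established the polyhedral-subdivision hypothesis. Your covering argument shows only that every \emph{vertex} of the hypersimplex lies in some $P_{\Mcal(T)}$; this does not by itself show that the union of the $P_{\Mcal(T)}$ is the whole hypersimplex, nor that their relative interiors are disjoint. Your intersection argument, as you yourself flag, is only vertex-level coherence from Proposition~\ref{prop:char-triang}(\ref{item:compatibility}), and you have not turned it into the statement that $P_{\Mcal(T)}\cap P_{\Mcal(T')}$ is a face of each. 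So the local criterion cannot be applied until you have done precisely the global work you hope it will let you skip; the reduction is circular as written.

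The arguments in the cited references do not proceed by directly verifying the subdivision axioms for the collection $\{P_{\Mcal(T)}\}$. They instead use what is now called the (tropical or combinatorial) Stiefel map, which carries subdivisions of $\ssimplex_{d-1}\times\ssimplex_{n-1}$ to subdivisions of the hypersimplex in a way that the polyhedral-subdivision property is inherited structurally from the triangulation rather than rebuilt from scratch. The paper itself sketches a closely related geometric mechanism right after Corollary~\ref{cor:matroid+subdivision+from+pointed} for the pointed case (coning from $\ee_{\widetilde{\rows}}$ over an embedded copy of the triangulation and slicing by the hypersimplex), which again produces a polyhedral subdivision by construction.
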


Note that the subdivision of $U_{d,n}$ is regular if and only if the corresponding subdivision of $\ssimplex_{d-1}\times\ssimplex_{n-1}$ is regular, see~\cite[\S 2]{HerrmannJoswigSpeyer:2014} for more details. 

\begin{example}[Matroid subdivisions of an octahedron] \label{ex:matroid+subdivision+octahedron}
  An octahedron is the matroid polytope of the uniform matroid $U_{2,4}$.
  There are exactly two types of matroid subdivisions of an octahedron.
  The trivial subdivision has only the octahedron as a cell.
  Apart from that, one can subdivide it into two pyramids as depicted in Figure~\ref{fig:matroid-subdivision}; there are three such subdivisions.
  While one can also subdivide the octahedron into four tetrahedra, it is not a matroid subdivision by the edge direction criterion.
  \smallskip

  The non-trivial subdivision in Figure~\ref{fig:matroid-subdivision} arises through the construction of Theorem~\ref{thm:matroid+subdivision+from+triangulation} from Example~\ref{ex:polyhedral+matching+field} and Figure~\ref{fig:four+two+trees}.
  We restrict our attention to the two trees with $\rows$-degree vector $[2,3]$ and $[3,2]$.
  The combinatorial Stiefel map gives the two sets $\{12,13,14,23,24\}$ and $\{13,14,23,24,34\}$, which form a matroid subdivision of $U_{2,4}$.
\end{example}

\begin{figure}[htb]
  \centering
  \includegraphics[scale=0.5]{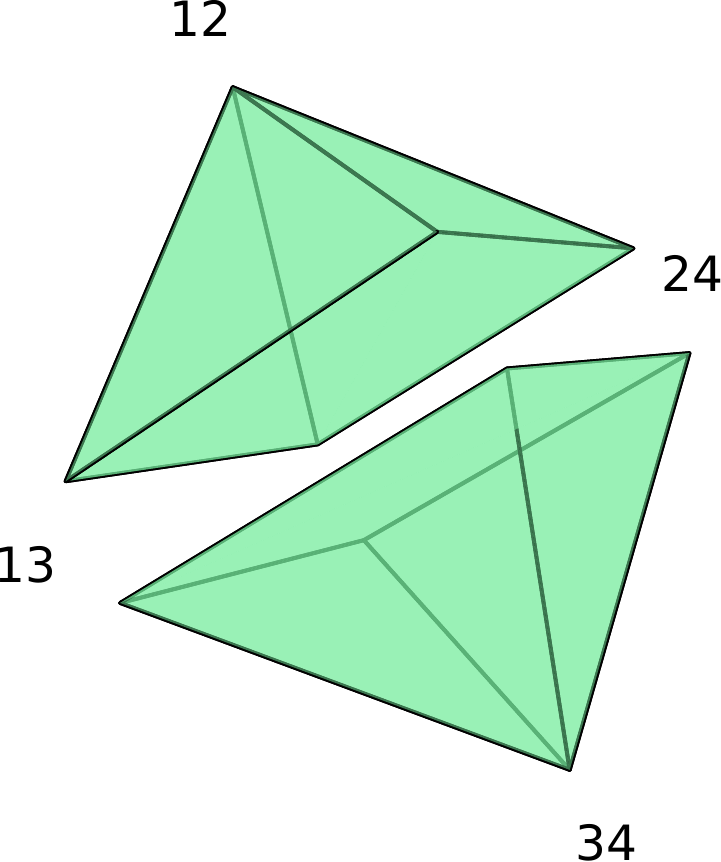}
  \caption{The matroid subdivision described in Example~\ref{ex:matroid+subdivision+octahedron}.}
  \label{fig:matroid-subdivision}
\end{figure}

Recall from Section~\ref{sec:triangulation} the construction of the set $\widetilde{\mathcal{T}}$ of trees on $\rows \sqcup \augground$. 
For each tree $T$ in $\widetilde{\mathcal{T}}$, we get an induced transversal matroid $\modMcal(T)$ on $\augground = \widetilde{\rows} \cup \ground $. 
In particular, the trees give rise to a matroid subdivision of $U_{d,d+n}$. 

\begin{corollary} \label{cor:matroid+subdivision+from+pointed}
  The matroids $\modMcal(T)$ ranging over all trees $T$ comprising a triangulation of $\ssimplex_{d-1}\times\ssimplex_{n-1}$ form the maximal cells for a matroid subdivision of~$U_{d,d+n}$.
\end{corollary}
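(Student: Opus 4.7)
The plan is to reduce Corollary~\ref{cor:matroid+subdivision+from+pointed} directly to Theorem~\ref{thm:matroid+subdivision+from+triangulation} by exhibiting the pointed matching field as a genuine polyhedral matching field on an enlarged product of simplices. This strategy is already hinted at in the paragraph immediately preceding the corollary, where it is noted that ``pointed polyhedral matching fields are actually polyhedral matching fields'' via a placing extension, so our work is to make the bookkeeping between the extended triangulation and $\widetilde{\mathcal{T}}$ precise.

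First, I would extend the triangulation $\mathcal{T}$ of $\ssimplex_{d-1}\times\ssimplex_{n-1}$ to a triangulation $\mathcal{T}'$ of $\ssimplex_{d-1}\times\ssimplex_{n+d-1}$ via a placing triangulation as in~\cite[Lemma~4.3.2]{DeLoeraRambauSantos:2010}, taking the $d$ new vertices of $\ssimplex_{n+d-1}$ to be indexed by $\widetilde{\rows}$ and placed one at a time in the chosen order. In this placing, the new vertex corresponding to $\tilde i \in \widetilde{\rows}$ is only joined (at the level of trees) via the pendant edge $(i,\tilde i)$, since $\tilde i$ is placed ``beyond'' all previous vertices with respect to a facet structure that isolates it along the $i$-th fibre. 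Consequently, each maximal simplex of $\mathcal{T}'$ is encoded by a tree on $\rows \sqcup \augground$ that either already lies in $\widetilde{\mathcal{T}}$ (namely a tree of $\mathcal{T}$ with all pendant edges $(i,\tilde i)$ attached) or else arises from a simplex in which some $\tilde i$ remains attached only via $(i,\tilde i)$ while others have been ``absorbed'' into lower-dimensional cells.

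Second, I would verify that $\widetilde{\mathcal{T}}$ coincides with the set $\Ical(\mathcal{T}')$, i.e.\ the trees of $\mathcal{T}'$ in which every row-node has degree at least $2$. One inclusion is immediate: for any $T \in \mathcal{T}$, every row-node already has degree at least $1$ (since $T$ is spanning), and appending the pendant edges adds one more, giving degree at least $2$. For the converse, any tree in $\mathcal{T}'$ in which some $\tilde i$ is attached only through the pendant edge $(i,\tilde i)$ must be obtainable as a placing extension of a tree in $\mathcal{T}$, and the degree-$\ge 2$ condition at each row-node forces every $\tilde i$ to be such a pendant leaf (otherwise some $i$ receives no edge in $\ground$, making its degree equal to $1$). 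This identifies the trees exactly.

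Finally, I would apply Theorem~\ref{thm:matroid+subdivision+from+triangulation} to $\mathcal{T}'$ to conclude that the transversal matroids $\Mcal(T)$ for $T \in \Ical(\mathcal{T}')$ form a matroid subdivision of $U_{d,n+d}$ whose polytopes are the maximal cells. By the identification established above these are exactly the $\modMcal(T)$ for $T \in \widetilde{\mathcal{T}}$, proving the claim. The main technical obstacle is step two: one must argue rigorously that the placing extension respects the tree encoding in the way described, which amounts to checking the criteria of Proposition~\ref{prop:char-triang} against the combinatorial effect of the pendant edges. Provided this identification is handled carefully, the rest is a routine application of Theorem~\ref{thm:matroid+subdivision+from+triangulation}.
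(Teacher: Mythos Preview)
Your proposal is correct and follows precisely the route the paper itself indicates: the paper does not give a separate proof of the corollary, but immediately before it recalls the construction of $\widetilde{\mathcal{T}}$, and in Section~\ref{sec:triangulation} it already remarks that pointed polyhedral matching fields are polyhedral matching fields via the placing extension of~\cite[Lemma~4.3.2]{DeLoeraRambauSantos:2010}, so the corollary is meant to be read as a direct consequence of Theorem~\ref{thm:matroid+subdivision+from+triangulation} applied to the extended triangulation. Your acknowledged technical point in step two is real---the one-line justification you give (``otherwise some $i$ receives no edge in $\ground$'') is not quite a valid argument as stated---but note that you do not actually need the equality $\widetilde{\mathcal{T}}=\Ical(\mathcal{T}')$ of trees, only that the two collections induce the same transversal matroids, which follows once the $\rows$-saturating matchings agree; alternatively, the paper's geometric description immediately after the corollary (coning from $\ee_{\widetilde{\rows}}$ over the triangulated vertex figure) gives another way to see the subdivision directly.
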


There is another way to think about this subdivision more geometrically.
Following \cite[\S4]{Rincon:2013}, all cells in this subdivision are principal (or fundamental) transversal matroids with fundamental basis $\widetilde{\rows}$.
Indeed, one can obtain this subdivision concretely as follows:
In the matroid polytope of $U_{d,d+n}$, the vertices adjacent to $\ee_{\widetilde{\rows}}$ are in natural bijection with the vertices of $\ssimplex_{d-1}\times\ssimplex_{n-1}$ and their convex hull is $\left(-\ssimplex_{d-1}\right)\times\ssimplex_{n-1}$ up to translation.
Triangulate the convex hull as the initial triangulation, and cone over the cells from $\ee_{\widetilde{\rows}}$.
The matroid subdivision in Corollary~\ref{cor:matroid+subdivision+from+pointed} is the intersection of these cones with the matroid polytope.

\subsection{Proof of Theorem~\ref{thm:main+theorem+intro}}

Fix a triangulation of $\ssimplex_{d-1}\times\ssimplex_{n-1}$, and as before let $\mathcal{I}$ denote the trees of the triangulation of whose nodes in $\rows$ each have degree at least 2. Let $(M_{\sigma})$ denote the polyhedral matching field extracted from $\mathcal{I}$. Fix a sign matrix $A\in\{+,-,0\}^{\rows\times\ground}$ such that an entry is non-zero whenever the corresponding edge appeared in some matching from $(M_{\sigma})$ (the collection of edges appeared in the matchings is the {\em support} of the matching field).


\begin{definition}[Sign of a matching] \label{def:sign+of+a+matching}
Let $M_{\sigma}$ be a perfect matching between $\rows$ and $\sigma\subset\ground$ in $K_{\rows,\ground}$.
Define $\sign(M_{\sigma})$ to be the sign of the permutation
\[
[d]\longrightarrow \rows\longrightarrow \sigma \longrightarrow[d],
\]
where the first map is the given and last maps are order-preserving bijections, with $\sigma$ inheriting the order from $\ground$, and
the middle map is the bijection determined by $M_{\sigma}$. 

\end{definition}


With the terminology in place we restate our first main Theorem~\ref{thm:main+theorem+intro}. 

\begin{theorem} \label{thm:main}
The sign map $\rchi:\binom{\ground}{d}\rightarrow\{+,-\}$ given by
\begin{equation} \label{eq:sign+map+matching+field}
  \sigma\mapsto\sign(M_\sigma)\prod_{e\in M_\sigma}A_e \enspace ,
\end{equation}
is the chirotope of a uniform oriented matroid. 
\end{theorem}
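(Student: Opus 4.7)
The plan is to reduce the global 3-term Grassmann--Pl\"ucker check for $\rchi$ to a collection of local checks on cells of the matroid subdivision of $U_{d,n}$ induced by the triangulation, and then to handle each local check via the realizability of the cell chirotope.

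\textbf{Reduction to 3-term GP.} Since $\rchi(\sigma)\in\{+,-\}$ for every $\sigma\in\binom{\ground}{d}$, the support of $\rchi$ is all of $\binom{\ground}{d}$, i.e.\ the basis family of $U_{d,n}$. By Proposition~\ref{prop:3GP} it therefore suffices to verify the 3-term GP relation at every choice of $X=\{x_3,\ldots,x_d\}\subset\ground$ and distinct $x_1,x_2,y_1,y_2\in\ground\setminus X$. The six bases $X\cup\{a,b\}$, $a\ne b\in\{x_1,x_2,y_1,y_2\}$, form the vertex set of an octahedral face $O$ of the hypersimplex, and the three GP products index the three antipodal pairs of $O$.

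\textbf{Using the matroid subdivision.} Theorem~\ref{thm:matroid+subdivision+from+triangulation} provides a matroid subdivision of the hypersimplex into the polytopes of the transversal matroids $\Mcal(T)$, $T\in\Ical$. Restricting it to $O$ yields a matroid subdivision of that octahedron, and by Example~\ref{ex:matroid+subdivision+octahedron} the only possibilities are the trivial subdivision or a pyramidal split along one antipodal pair. In either case I can pick a cell $\Mcal(T)$ containing at least five of the six vertices of $O$; in the split case the unique missing vertex is one endpoint of the diagonal along which the split occurs.

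\textbf{Exploiting local realizability.} The heart of the argument is Lemma~\ref{lem:TM_real}: the restriction of $\rchi$ to the bases of $\Mcal(T)$ (extended by $0$ outside) is a realizable, hence bona fide, chirotope, so it satisfies the 3-term GP relation at $(X;x_1,x_2,y_1,y_2)$. In the unsplit case all six bases lie in $\Mcal(T)$ and this is exactly the desired relation for $\rchi$. In the split case the cell chirotope vanishes on the one GP product indexed by the splitting diagonal, so the GP relation inside the cell forces the other two products to carry opposite signs; since $\rchi$ agrees with the cell chirotope on the five present bases, these same two products carry opposite signs globally, and the 3-term GP for $\rchi$ holds regardless of the sign of the third product. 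This is the local-to-global principle alluded to in the introduction (Theorem~\ref{thm:local_global}).

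\textbf{Main obstacle.} The substantive work is Lemma~\ref{lem:TM_real}. The natural plan is to construct, for each fixed $T\in\Ical$, an explicit real matrix whose column-signs come from $A$ and whose entries are perturbed by parameters read off from the combinatorics of $T$, in such a way that for every basis $\sigma$ of $\Mcal(T)$ the submatrix indexed by $\sigma$ has a determinant dominated by the term coming from $M_\sigma$, with sign $\sign(M_\sigma)\prod_{e\in M_\sigma}A_e$. Arranging this simultaneously for all bases in a single cell requires the tight coherence among the matchings of the polyhedral matching field that lie within a common tree; the forthcoming Example~\ref{ex:SZ_example} is advertised precisely to show that mere linkage is not enough, so the polyhedral hypothesis must be used in full at this step.
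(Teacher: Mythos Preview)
Your overall architecture is exactly the paper's: matroid subdivision of $U_{d,n}$ by the transversal matroids $\Mcal(T)$ (Theorem~\ref{thm:matroid+subdivision+from+triangulation}), local realizability on each cell (Lemma~\ref{lem:TM_real}), and the local-to-global step via octahedral faces (Theorem~\ref{thm:local_global}, which you essentially inline). So the proposal is correct in outline.

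Where you go astray is in your ``Main obstacle'' paragraph. Lemma~\ref{lem:TM_real} is not the hard step at all, and no perturbation or domination argument is needed. The point is simply that $T$ is a \emph{tree}: for any $\sigma\in\binom{\ground}{d}$ there is at most one perfect matching between $\rows$ and $\sigma$ inside $T$ (two distinct matchings would create a cycle). Hence in the Leibniz expansion of $\det(A(T)_{|\sigma})$ there is at most one nonzero term, and when $\sigma$ is a basis of $\Mcal(T)$ that term is precisely $\sign(M_\sigma)\prod_{e\in M_\sigma}A_e=\rchi(\sigma)$. Any choice of nonzero real entries with the prescribed signs works; there is nothing to balance. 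The polyhedral hypothesis is not used here: it is consumed earlier, in guaranteeing (via Proposition~\ref{prop:char-triang}(3)) that the unique matching in $T$ agrees with the matching field's $M_\sigma$, and in producing the matroid subdivision in the first place. Example~\ref{ex:SZ_example} illustrates the failure of this \emph{compatibility across trees} for merely linkage matching fields, not a failure of the single-tree realization.
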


Let $T$ be a tree in $\Ical$ and let $A(T)$ be any matrix obtained by setting every entry of $A$ not in $T$ to $0$ and replacing every remaining non-zero entry by an arbitrary real number of the same sign.
Furthermore, let $\rchi_{T}$ be the map $\rchi$ restricted to the bases of the transversal matroid $\Mcal(T)$.

\begin{lemma} \label{lem:TM_real}
  The map $\rchi_{T}$ is the chirotope of the rank $d$ oriented matroid realized by $A(T)$.
\end{lemma}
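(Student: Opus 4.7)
The plan is to compute $\sign\det A(T)_\sigma$ directly via the Leibniz expansion and exploit the fact that $T$ is a tree to show that only one term can be non-zero. By definition, the chirotope of the rank $d$ oriented matroid realized by $A(T)$ sends $\sigma \in \binom{\ground}{d}$ to $\sign\det A(T)_\sigma$, so verifying that this agrees with $\rchi$ on bases of $\Mcal(T)$ and vanishes elsewhere suffices.

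First I would expand
\begin{equation*}
\det A(T)_\sigma \;=\; \sum_{\pi} \sign(\pi)\prod_{i\in\rows} A(T)_{i,\sigma_{\pi(i)}},
\end{equation*}
where $\pi$ ranges over bijections $\rows\to\sigma$. Since the support of $A(T)$ is contained in the edge set of $T$, the non-zero terms are in bijection with the $\rows$-saturating matchings on $\rows\sqcup\sigma$ that lie inside $T$. By the definition of transversal matroid, such a matching exists if and only if $\sigma\in\Mcal(T)$, so $\det A(T)_\sigma = 0$ precisely when $\sigma$ is not a basis, matching $\rchi_T$'s (tacit) extension by zero.

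The key structural observation is that when $\sigma \in \Mcal(T)$, this matching is unique. Indeed, if two distinct $\rows$-saturating matchings $M,M'\subseteq T\cap K_{\rows,\sigma}$ both existed, then $M \triangle M'$ would be a non-empty subgraph in which every vertex of $\rows\sqcup\sigma$ has degree $0$ or $2$; as a non-empty $2$-regular bipartite graph it would contain a cycle, contradicting the fact that $T$ is a tree. This unique matching must coincide with $M_\sigma$: by Proposition~\ref{prop:char-triang}(\ref{item:compatibility}) and the definition of the polyhedral matching field, $M_\sigma$ is the unique $\rows$-saturating matching on $\rows\sqcup\sigma$ appearing in any tree of the triangulation, hence in any $T \in \Ical$ built from unions of such trees.

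Consequently, for $\sigma \in \Mcal(T)$ the Leibniz expansion collapses to a single term corresponding to $M_\sigma$, and by Definition~\ref{def:sign+of+a+matching} the sign of the associated permutation is exactly $\sign(M_\sigma)$. Therefore
\begin{equation*}
\sign\det A(T)_\sigma \;=\; \sign(M_\sigma)\prod_{e\in M_\sigma}\sign\bigl(A(T)_e\bigr) \;=\; \sign(M_\sigma)\prod_{e\in M_\sigma} A_e \;=\; \rchi(\sigma),
\end{equation*}
where the middle equality uses that the non-zero entries of $A(T)$ were chosen to have the same signs as the corresponding entries of $A$. This shows the chirotope of $A(T)$ coincides with $\rchi_T$, completing the argument. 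The only non-routine step is the identification of the tree-matching with $M_\sigma$; the rest is standard determinantal bookkeeping, and crucially no cancellation can occur in any $\det A(T)_\sigma$ because the forest structure of $T$ guarantees that only one Leibniz term is ever non-zero, so the conclusion is independent of the arbitrary choice of real entries of $A(T)$.
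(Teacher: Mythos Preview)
Your proof is correct and follows essentially the same approach as the paper: expand the determinant, use the tree structure to see at most one term survives, and identify that term with $M_\sigma$. You supply more detail than the paper does (the symmetric-difference argument for uniqueness and the explicit appeal to Proposition~\ref{prop:char-triang}(\ref{item:compatibility})), whereas the paper simply asserts ``there is a unique matching, namely $M_\sigma$''. Two small remarks: the paper also invokes Lemma~\ref{lem:tranversal+full+rank} to note that $\rchi_T$ is non-zero (equivalently that $A(T)$ has rank $d$), which you should state explicitly; and your phrase ``any $T\in\Ical$ built from unions of such trees'' is a slip---the trees in $\Ical$ are themselves trees of the triangulation (those with $\rows$-degree at least~$2$), not unions thereof, so your appeal to Proposition~\ref{prop:char-triang}(\ref{item:compatibility}) applies directly.
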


\begin{proof}
  By Lemma~\ref{lem:tranversal+full+rank}, $\rchi_T$ is non-zero.
If $\sigma\in\binom{\ground}{d}$ is not a basis of the transversal matroid, then there are no perfect matchings between $\rows$ and $\sigma$ in $T$, thus $\det(A(T)_{|\sigma})$ is $0$. 
Otherwise there is a unique matching, namely $M_\sigma$, between $\rows$ and $\sigma$ in $T$. 
This matching corresponds to the unique non-zero term in the expansion of $\det(A(T)_{|\sigma})$, whose sign coincides with $\rchi(\sigma)$ by definition.
\end{proof}

\begin{theorem}\label{thm:local_global}
Let $M_1,\ldots, M_k$ be a matroid subdivision of some matroid $M$. 
Let $\rchi$ be an alternating sign map supported on the bases $\Bcal(M)$.
Suppose each restriction $\rchi_i:\Bcal(M_i)\rightarrow\{+,-\}$ is a chirotope. 
Then $\rchi$ itself is a chirotope.
\end{theorem}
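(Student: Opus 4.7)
The plan is to apply Proposition~\ref{prop:3GP}. Since $\rchi$ is alternating and its support equals $\Bcal(M)$ (every vertex of the matroid polytope of $M$ lies in some cell of the subdivision, and $\rchi|_{\Bcal(M_i)} = \rchi_i$ is nowhere zero as a chirotope), it suffices to verify the $3$-term GP relation for $\rchi$. Fix $X \subset \ground$ with $|X| = d-2$ and distinct $x_1, x_2, y_1, y_2 \in \ground \setminus X$; in the presence of repetitions the expressions in~(\ref{eq:3GP}) vanish by alternation.

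Consider the face $F$ of the matroid polytope of $M$ spanned by those vertices $\ee_{X \cup \{a,b\}}$ with distinct $a,b \in \{x_1,x_2,y_1,y_2\}$ that are bases of $M$. Then $F$ is the matroid polytope of the rank-$2$ minor $M' := (M/X)|\{x_1,x_2,y_1,y_2\}$, and is a (possibly improper) face of the octahedron $U_{2,4}$. The subdivision $\{M_i\}$ restricts to a matroid subdivision of $F$ whose cells are matroid polytopes of minors of the $M_i$.

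The key combinatorial input I would establish is a short classification: among rank-$2$ matroid polytopes on at most $4$ elements, only the octahedron admits a non-trivial matroid subdivision, and every such subdivision splits it into exactly two square pyramids sharing a common square. This follows by enumerating the rank-$2$ matroid polytopes on up to $4$ elements (octahedra, square pyramids, squares, triangles, edges, vertices) and observing that a full-dimensional cell in such a subdivision must itself be $3$-dimensional, so only octahedra and pyramids qualify, while the pyramid has too few vertices to be tiled non-trivially by polytopes from this list.

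Given this lemma, I proceed by cases. If the induced matroid subdivision of $F$ is trivial --- which is automatic unless $F$ is the full octahedron --- then some single cell $M_i$ has a minor whose polytope is $F$, and the $3$-term GP for $\rchi_i$ immediately yields the relation for $\rchi$ on the bases in question. Otherwise $F$ is the full octahedron split into two pyramids, say along the square separating $\{x_1,x_2\}$ from $\{y_1,y_2\}$ (the other two possible splits are handled symmetrically). Choose a cell $M_{i_1}$ whose minor realizes the pyramid missing $\{y_1,y_2\}$. Extending $\rchi_{i_1}$ by $0$ outside its bases, the first of the three products in~(\ref{eq:3GP}) vanishes for $\rchi_{i_1}$ (its factor $\rchi_{i_1}(X \cup \{y_1,y_2\})$ is zero), while the factors in the other two products are all bases of $M_{i_1}$ and hence agree with those of $\rchi$. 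The $3$-term GP for $\rchi_{i_1}$ then forces these two products to be nonzero with opposite signs, and the remaining product $\rchi(X \cup \{x_1,x_2\})\rchi(X \cup \{y_1,y_2\})$ is globally nonzero, so the three products in~(\ref{eq:3GP}) contain both a positive and a negative value. The main obstacle is the classification lemma together with the sign bookkeeping in this octahedron-split subcase; both reduce to finite checks once the geometric picture is fixed.
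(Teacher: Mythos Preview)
Your proposal is correct and follows essentially the same route as the paper's proof: reduce to the $3$-term GP relation via Proposition~\ref{prop:3GP}, pass to the face $F$ of the matroid polytope carrying the six relevant vertices, use the (easy) classification of matroid subdivisions of rank-$2$ polytopes on four elements, and in the octahedron case pick a cell $M_i$ whose restriction to $F$ is a pyramid so that one product vanishes while the other two survive with opposite signs. The only cosmetic differences are that the paper argues by contradiction and singles out $F$ via an explicit linear functional $\mathbf{c}$ (with large values off $X\cup\{x_1,x_2,y_1,y_2\}$), whereas you identify $F$ directly as the polytope of the minor $(M/X)|\{x_1,x_2,y_1,y_2\}$; both descriptions yield the same face.
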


\begin{proof}
Suppose $\rchi$ is not a chirotope. 
Since $\underline{\rchi}$ is by assumption a matroid, by Proposition~\ref{prop:3GP}, there exist $x_1,x_2,y_1,y_2,x_3,\ldots,x_d\in \ground$ such that (\ref{eq:3GP}) is violated. 
It is routine to check that all $d+2$ elements must be distinct here, and the six $d$-element subsets involved in (\ref{eq:3GP}) form three ``antipodal'' pairs. 
Moreover, at least one such pair consists of bases of $M$ as at least a term in (\ref{eq:3GP}) is non-zero.

\smallskip

Construct a vector ${\bf c}\in\mathbb{R}^{\ground}$ where ${\bf c}(x_1)={\bf c}(x_2)={\bf c}(y_1)={\bf c}(y_2)=1$, ${\bf c}(x_i)=0, \forall i=3,\ldots,d$, and ${\bf c}(\ell)$'s are sufficiently large numbers for every other $\ell\in \ground$. 
Consider the face $F_{\bf c}$ of the matroid polytope of $M$ that minimizes ${\bf x}\mapsto {\bf c}^{\top}{\bf x}$. 
A vertex of the matroid polytope of $M$ achieves the minimum if and only if it corresponds to one of the aforementioned subsets while being a basis of $M$. 
$F_{\bf c}$~is a matroid polytope and the global matroid subdivision restricts to a matroid subdivision of it. 
We distinguish two cases depending on the shape of $F_{\bf c}$:

\smallskip

Case I: $F_{\bf c}$ is an octahedron, i.e., all three terms of (\ref{eq:3GP}) are non-zero. 
There are four possible matroid subdivisions of an octahedron as discussed in Ex.~\ref{ex:matroid+subdivision+octahedron}, the trivial one and three that subdivide it into two pyramids. 
In either case, there is a 3-dimensional cell $C'$ that contains at least two pairs of antipodal vertices, pick a full-dimensional cell $C:=C_{M_i}$ of the global matroid subdivision that contains $C'$. 
Then restricting $\rchi$ to $M_i$  means that we are setting at most one term out of the three terms in (\ref{eq:3GP}) to zero, which still yields a violation of the 3-term GP relation in~$\rchi_{M_i}$.

\smallskip

Case II: $F_{\bf c}$ is not an octahedron (thus either a square or a pyramid). 
The only possible matroid subdivision of $F_{\bf c}$ is the trivial one. 
Similar to Case I, we pick a cell $C:=C_{M_i}$ containing $F_{\bf c}$, and $\rchi_{M_i}$ will violate (\ref{eq:3GP}) exactly like $\rchi$.
\end{proof}

\begin{proof}[{Proof of Theorem~\ref{thm:main}}]
By Corollary~\ref{cor:matroid+subdivision+from+pointed}, there is a matroid subdivision of the uniform matroid $U_{d,n}$ consisting of transversal matroids, one for each spanning tree in $\Ical$.
By Lemma~\ref{lem:TM_real}, the sign maps $\rchi_T$'s are chirotopes. 
Hence $\rchi$ itself is a chirotope by Theorem~\ref{thm:local_global}.
\end{proof}

\begin{example} \label{ex:diagonal+2+4+signs}
  The diagonal matching field depicted in Figure~\ref{fig:four+two+matching+field} with the sign-matrix
  \[
  \begin{pmatrix}
    + & - & + & -\\
    + & - & - & +
  \end{pmatrix}
  \]
  gives rise to the chirotope
  \[
  (12,-),(13,-),(14,+),(23,+),(24,-),(34,+) \enspace .
  \]
  This is shown in Figure~\ref{fig:octahedron+signs+trees}. 
\end{example}

\begin{figure}[htb]
    \begin{center}
 \input{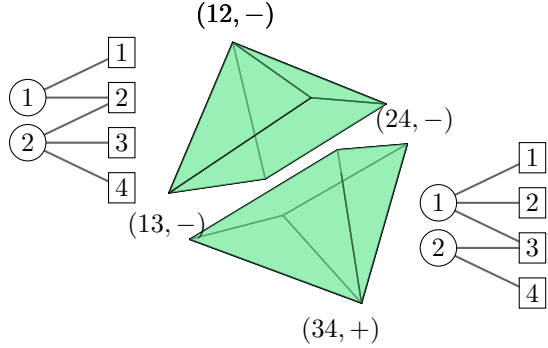}
  \caption{A matroid subdivision with signs induced by the trees and signs from Example~\ref{ex:diagonal+2+4+signs}. }
  \label{fig:octahedron+signs+trees}
    \end{center}
\end{figure}

\begin{remark}
  Instead of the full polytope $\ssimplex_{d-1} \times \ssimplex_{n-1}$, one could equally start with a subpolytope of it.
  The whole construction then yields a chirotope for the transversal matroid of the subgraph of $K_{d,n}$ corresponding to the subpolytope.
  However, it is no restriction to study only the full polytope as each triangulation can be extended to the full product of simplices.
  This is further explained in~\cite{DeLoeraRambauSantos:2010} and (also with signs) in~\cite{Loho:2016}.
\end{remark}

\begin{example}
The assumption in Theorem~\ref{thm:main} that the support of $A$ contains the support of the matching field is important.
Consider the following sign matrix together with the $(3,5)$-diagonal matching field:
$$
\begin{pmatrix}
+ & + & 0 & 0 & 0\\
0 & + & 0 & + & 0\\
0 & 0 & + & 0 & +
\end{pmatrix}.
$$
The support of the induced $\rchi$ is $\{123, 125, 145, 245\}$, which is not the collection of bases of any matroid.
\end{example}

\begin{example} \label{ex:local_global_converse}
We note that the converse of Theorem~\ref{thm:local_global} is not true in general.
Consider the assignment of signs
  \[
  (12,+),(13,+),(14,+),(23,+),(24,-),(34,-) \enspace ,
  \]
together with the matroid subdivision as in Figure~\ref{fig:octahedron+signs+trees}.
It is routine to check that the assignment is a valid chirotope.
However, when restricted to the upper piece (the pyramid containing the basis $12$ but not $34$), the assignment no longer satisfies the 3-term GP relation.
\end{example}

\begin{example} \label{ex:SZ_example}
  As the linkage property was introduced in~\cite{SturmfelsZelevinsky:1993} in analogy with the basis exchange axiom for matroids, it is tempting to assume that a linkage matching field gives rise to an oriented matroid, generalizing our construction for polyhedral matching fields. 
  However, the linkage but not polyhedral matching field depicted in Figure~\ref{fig:non+right+linkage+field} shows that this is not correct.
  Take the sign matrix to be all positive. We have the following matchings (each tuple is a subset $\{i_1<i_2<i_3\}\in\binom{[5]}{3}$ followed by the permutation $\sigma(i_1),\sigma(i_2),\sigma(i_3)$ of $[3]$ and the sign of the permutation):

\begin{center}
\begin{tabular}{lll}
  $125$&$ 123$&$ +$\\
  $135$&$ 132$&$ -$\\
  $145$&$ 123$&$ +$\\
  $235$&$ 231$&$ +$\\
  $245$&$ 231$&$ +$\\
  $345$&$ 231$&$ +$.\\
\end{tabular}
\end{center}

Take $(x_1,x_2,y_1,y_2,X)=(1,2,3,4,\{5\})$ in~\eqref{eq:3GP}.
Since all products
\begin{align*}
  \rchi(1,2,5)\cdot\rchi(3,4,5)=+\cdot + = + \\
  \rchi(1,3,5)\cdot\rchi(4,2,5)=-\cdot - = + \\
  \rchi(1,4,5)\cdot\rchi(2,3,5)=+\cdot + = +
\end{align*}
  are positive, this contradicts the $3$-term GP relation. 

  \smallskip
  
We elaborate more on this.
A linkage matching field also determines a collection of spanning trees \cite{LohoSmith:2020}, but those trees might contain new matchings.
For example, in \cite[Fig.~11]{LohoSmith:2020}, the top tree and the left tree contain different matchings on $3456$.
In such a case, while each tree still yields a chirotope, their values need not agree.
The values on some of the bases correspond to distinct matchings which means geometrically that these chirotopes can not be ``glued'' together.
\end{example}

\subsection{Compatibility of chirotopes} \label{sec:compatibility+chirotopes}

It is interesting to ask when does the converse of Theorem~\ref{thm:local_global} hold. 
In particular, in light of Example~\ref{ex:local_global_converse}, this asks for a classification of those matroid subdivisions which are compatible with a prescribed chirotope. 
For example, in the very recent works on positive Dressians and positive tropical Grassmannians \cite{AHrkaniLamSpradlin:2020, LukowskiParisiWilliams:2020, SpeyerWilliams:2020}, they consider the following special case of the problem:
Assign $+$ to all (increasingly ordered) bases of some uniform matroid (it is known that such a sign map is a chirotope, cf. Example~\ref{ex:alternating+matroid}).
Given a matroid subdivision of that matroid, when does the restriction of this chirotope to every piece also form a chirotope?
These works mostly consider regular matroid subdivisions, but in \cite{SpeyerWilliams:2020} they also constructed a non-regular matroid subdivision of $U_{3,12}$  by transversal matroids with such property.

Using a similar approach as in the proof of Theorem~\ref{thm:local_global}, we provide a criterion for deciding if, given a chirotope and a matroid subdivision of some matroid, its converse holds.

Let $\rchi$ be a chirotope supported on $U_{2,4}$.
There are exactly two terms among $\rchi(1,2)\rchi(3,4), -\rchi(1,3)\rchi(2,4), \rchi(1,4)\rchi(2,3)$ having the same sign, hence exactly one matroid subdivision of $U_{2,4}$ (necessarily non-trivial) does not satisfy the converse of Theorem~\ref{thm:local_global} (see Example~\ref{ex:local_global_converse}).
We call such a subdivision {\em forbidden} with respect to~$\rchi$.

\begin{proposition} \label{prop:local_global_converse}
Let $\rchi$ be a chirotope supported on some matroid $M$ and let $M_1,\ldots, M_k$ be a matroid subdivision of $M$.
The restrictions $\rchi_i$'s of $\rchi$ to $M_i$'s are all chirotopes if and only if the restriction of the matroid subdivision to every octahedron face is not forbidden.
\end{proposition}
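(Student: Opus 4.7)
The plan is to mimic the proof structure of Theorem~\ref{thm:local_global}, using Proposition~\ref{prop:3GP} together with the auxiliary vector $\mathbf{c}$ and its minimizing face $F_{\mathbf{c}}$. The central idea is that a failure of the three-term GP for a restriction $\rchi_i$ is always localized to an octahedron face of $M$'s polytope, where it coincides precisely with a forbidden subdivision.

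For the forward direction, assume each $\rchi_i$ is a chirotope, and suppose for contradiction that some octahedron face $F$ of $M$'s polytope carries a forbidden subdivision. Choose $X, x_1, x_2, y_1, y_2$ so that $F = F_{\mathbf{c}}$ with potential bases $B_1,\ldots,B_6$, arranged so that the forbidden subdivision separates the antipodal pair $\{B_1, B_2\}$ into pyramids $P_1, P_2$ with apexes $B_1, B_2$; by the definition of forbidden, the two surviving GP products $T_2 = -\rchi(B_3)\rchi(B_4)$ and $T_3 = \rchi(B_5)\rchi(B_6)$ share a common nonzero sign. Pick a full-dimensional cell $M_{i_1}$ of the global subdivision whose polytope contains $P_1$. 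Since $B_2$ minimizes $\mathbf{c}$ on $M$'s polytope but is not a vertex of $P_1$, we have $B_2 \notin \Bcal(M_{i_1})$, so the three-term GP of $\rchi_{i_1}$ at this data reads $T_1' = 0$, $T_2' = T_2$, $T_3' = T_3$ with $T_2', T_3'$ of matching nonzero signs---a GP violation contradicting the chirotope hypothesis on $\rchi_{i_1}$.

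For the reverse direction, I argue by contrapositive. Suppose $\rchi_i$ violates the three-term GP at some $X, x_1, x_2, y_1, y_2$ and build $\mathbf{c}$ as in Theorem~\ref{thm:local_global}. If all three $T_k'$ are nonzero with matching signs, then all six potential bases lie in $\Bcal(M_i) \subseteq \Bcal(M)$, giving $T_k' = T_k$ and propagating the violation to $\rchi$---impossible. Otherwise two products $T_{k_1}', T_{k_2}'$ are nonzero with matching signs while $T_k' = 0$; the possibility $T_k = 0$ already forces $\rchi$ to violate GP, so $T_k \neq 0$. All six potential bases then lie in $\Bcal(M)$, making $F_{\mathbf{c}}$ an octahedron face, while $F_{\mathbf{c}}^i := F_{\mathbf{c}} \cap M_i$ contains the four vertices supporting $T_{k_1}', T_{k_2}'$ and omits at least one of the pair $\{B_k, B_k'\}$. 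The geometric step described below rules out the possibility that $F_{\mathbf{c}}^i$ is only the planar square shared by the two pyramids, leaving $F_{\mathbf{c}}^i$ to be a pyramid cell of a subdivision separating $\{B_k, B_k'\}$; the hypothesis that $T_{k_1}, T_{k_2}$ share a sign is then exactly the forbidden condition for this subdivision.

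The main obstacle is the geometric step just invoked: showing that $F_{\mathbf{c}}^i$ cannot be the 2-dimensional interior square that lies between the two pyramids. This follows because $F_{\mathbf{c}}^i$ is the entire $\mathbf{c}$-minimizing face of $M_i$'s polytope, while the interior square is shared as a face only by the two full-dimensional cells containing the pyramids, and for either of those cells the corresponding $F_{\mathbf{c}}^i$ equals the full pyramid rather than just the square. All remaining candidate sub-faces of $F_{\mathbf{c}}$---the boundary triangles, edges, and vertices of the octahedron---contain no antipodal pair, and hence cannot support a GP violation, completing the case analysis.
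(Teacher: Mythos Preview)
Your overall approach mirrors the paper's proof closely, and both directions are set up correctly. The forward direction is fine. In the reverse direction, however, the ``geometric step'' you invoke to rule out $F_{\mathbf c}^i$ being the equatorial square is wrong, not merely under-justified.

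Your claim is that the interior square is a face of only the two full-dimensional cells whose $\mathbf c$-faces are the two pyramids. This fails already for a simple matroid subdivision of $U_{2,5}$. Take the common refinement of the two splits $\{x_1+x_2\le 1\}$ and $\{x_4+x_5\le 1\}$ of $\Delta(2,5)$; it has three maximal cells $M_1,M_2,M_3$, where $M_1=\{x_1+x_2\le 1,\ x_4+x_5\le 1\}$. For the octahedron face $F'$ obtained by setting $x_3=0$ (vertices $12,14,15,24,25,45$), one computes $F'\cap M_1=\{14,15,24,25\}$, exactly the square, while $F'\cap M_2$ and $F'\cap M_3$ are the two pyramids. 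So a full-dimensional cell can meet an octahedron face in the square, and there can be more than two full-dimensional cells touching that square.

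The paper does not try to exclude this case. Instead, when $F'$ is the square it simply observes that the square is not a face of the octahedron, so its presence as a face of the induced subdivision forces the subdivision to be the non-trivial one with this particular square as the common base; restricting $\rchi$ to either pyramid then still has the two surviving products $T_{k_1},T_{k_2}$ of the same sign and the third product zero, i.e.\ the subdivision is forbidden. You can patch your argument in exactly this way: drop the attempt to rule out the square, and note that whether $F_{\mathbf c}^i$ is a pyramid or the square, the induced subdivision on $F_{\mathbf c}$ is the one separating $\{B_k,B_k'\}$, and your sign hypothesis on $T_{k_1},T_{k_2}$ is precisely the forbidden condition.
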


\begin{proof}
  Suppose the restriction of the matroid subdivision to an octahedron face is forbidden with (3-dimensional) cells $C, C'$.
  Pick a full-dimensional cell $M_i$ that contains $C$, then the 3-term GP relation corresponding to the vertices of $C$ is violated in $M_i$.

Conversely, suppose some 3-term GP relation (\ref{eq:3GP}) is violated in $M_i$; denote by $F$ and $F'$ the faces of (the matroid polytopes of) $M$ and $M_i$ that contain bases involved in (\ref{eq:3GP}), respectively.
Since $\rchi$ is a chirotope but $\rchi_i$ is not, some bases of $M$ involved in (\ref{eq:3GP}) must be non-bases of $M_i$, so we have $F'\subsetneq F$.
Suppose $F$ is an octahedron.
$F'$ is either a pyramid or a square, and $F$ must have been subdivided in the matroid subdivision.
If $F'$ is a pyramid, then the subdivision of $F$ is by definition forbidden; if $F'$ is a square, pick any pyramid of the subdivision and the 3-term GP relation is still violated there, so the subdivision is also forbidden.
Now suppose $F$ is a pyramid.
$F'$ must be a square, but in such case $F$ would have violated the 3-term GP relation as $F'$ does.
\end{proof}

Proposition~\ref{prop:local_global_converse} could be quite hard to check algorithmically as there could be exponentially many octahedron faces, but if $\rchi$ has a nice description, one might expect to understand the situation better by analyzing the relative position of the octahedron faces together with their forbidden subdivisions.
For example, \cite[Theorem~4.3]{SpeyerWilliams:2020}  is a special case of our criterion.

\subsection{Duality} \label{sec:duality+chirotopes}

Another important concept for oriented matroids is \emph{duality}.
Recall from Example~\ref{ex:realizable+oriented+matroid} that an oriented matroid arising from a rectangular matrix can be represented in the form $(I_{d,d}|B)$, where we let $B$ be a $d \times n$ matrix for now.
Following~\cite[\S 7.2.5]{Handbook+DCG:2004}, its dual oriented matroid is represented by $(-\transpose{B}|I_{n \times n})$.
We mimic this by fixing a triangulations $\mathcal{T}$ of $\ssimplex_{d-1} \times \ssimplex_{n-1}$ as well as a sign matrix $A \in \{-1,+1\}^{d \times n}$, and constructing a pair of pointed polyhedral matching fields that gives a dual pair of oriented matroids.
This illustrates an advantage of considering pointed matching fields: they allow a more satisfactory duality theory.
\footnote{The duality for tropical oriented matroids in the sense of~\cite{ArdilaDevelin:2009} is obtained by essentially swapping the two node sets of size $d$ and $n$ of the bipartite graphs.
  This seems to be in contrast to the duality of classical oriented matroids, where both share the same ground set and have complementary ranks.
  However, it is resolved nicely by observing our construction below.
  In the duality of tropical oriented matroids, it is also crucial which part of the node set can have isolated nodes, as the two node sets have different roles. 
  Not only the two node sets but also this restriction is moved from one to the other side.
  But by passing to the pointed matching fields of size $(d,d+n)$ and $(n,n+d)$ as we do below, one sees that the isolated nodes implicitly give rise to a common ground set of size $d+n$. 
}

The paragraph on ``Duality'' in~\cite[\S 3.5]{BLSWZ:1993} gives the following condition for two chirotopes to be dual.

\begin{lemma} \label{lem:duality+chirotopes}
  Let $\ground$ be a ground set of size $d + n$. 
  Let $\rchi \colon \ground^d \rightarrow\{+,-,0\}$ and $\rchi' \colon \ground^n\rightarrow\{+,-,0\}$ be two chirotopes of a uniform oriented matroid of rank $d$ and $n$, respectively. 
  They are dual if and only if for each ordering $(x_1,\dots,x_{d+n})$ of the ground set $\ground$ we have
  \[
  \rchi(x_1,\dots,x_d) \cdot \rchi'(x_{d+1},\dots,x_{d+n}) = \sign(x_1,\dots,x_{d+n}) ,
  \]
  where the latter sign is the sign of the permutation represented by the ordering of the $x_k$. 
\end{lemma}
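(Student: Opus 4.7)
The plan is to show that the condition stated in the lemma is merely a rewriting of the standard definition of the dual chirotope given in~\cite[\S 3.5]{BLSWZ:1993}, so the proof reduces to unwinding definitions and checking that the sign bookkeeping is consistent.

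First I would recall the standard construction: given a chirotope $\rchi$ of rank $d$ on $\ground$ with $|\ground|=d+n$, the \emph{dual chirotope} $\rchi^{*}$ of rank $n$ is defined by
\[
\rchi^{*}(y_1,\dots,y_n) \;=\; \sign(x_1,\dots,x_d,y_1,\dots,y_n)\cdot\rchi(x_1,\dots,x_d),
\]
where $(x_1,\dots,x_d)$ is any ordering of the complement $\ground\setminus\{y_1,\dots,y_n\}$. The key point is that this is well defined: swapping two $x_i$'s changes both $\sign(x_1,\dots,x_d,y_1,\dots,y_n)$ and $\rchi(x_1,\dots,x_d)$ by a factor of $-1$, so the product is unchanged. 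It is proved in BLSWZ that $\rchi^{*}$ is itself a chirotope and that this operation implements oriented-matroid duality; I would simply cite this.

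Next I would observe that the displayed identity in the lemma, after multiplying both sides by $\rchi(x_1,\dots,x_d)\in\{+,-\}$, becomes
\[
\rchi'(x_{d+1},\dots,x_{d+n}) \;=\; \sign(x_1,\dots,x_{d+n})\cdot\rchi(x_1,\dots,x_d),
\]
which is exactly the defining equation of $\rchi^{*}$ with $(y_1,\dots,y_n):=(x_{d+1},\dots,x_{d+n})$. Hence the stated condition holds for every ordering if and only if $\rchi'=\rchi^{*}$, proving both directions of the equivalence.

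Two small points remain to be justified. The alternating property of $\rchi'$ is automatically consistent with the condition, since swapping two of the $x_{d+1},\dots,x_{d+n}$ flips both $\rchi'$ and the permutation sign, while swapping two of $x_1,\dots,x_d$ flips both $\rchi$ and the permutation sign; and swapping an $x_i$ ($i\le d$) with an $x_j$ ($j>d$) simply re-chooses the subset defining the dual value, which we already saw is well defined. The uniformity assumption guarantees that $\rchi$ and $\rchi'$ are nowhere zero on ordered tuples of distinct elements, so no degenerate cases appear. The only real content of the lemma is therefore the cited fact from BLSWZ that $\rchi^{*}$ genuinely encodes the dual oriented matroid; the main obstacle is purely notational, namely keeping the permutation signs straight when moving between the symmetric and the ordered definitions.
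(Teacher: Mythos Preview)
Your proposal is correct. The paper does not actually prove this lemma at all: it simply introduces the statement with the sentence ``The paragraph on `Duality' in \cite[\S 3.5]{BLSWZ:1993} gives the following condition for two chirotopes to be dual'' and then states the lemma without proof. Your argument, which unwinds the standard definition $\rchi^{*}(y_1,\dots,y_n)=\sign(x_1,\dots,x_d,y_1,\dots,y_n)\rchi(x_1,\dots,x_d)$ and identifies it with the displayed identity, is exactly the verification one would carry out to see that the lemma is a restatement of that definition; it is more explicit than what the paper provides but entirely in the same spirit.
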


As mentioned for the definition of a pointed polyhedral matching field (Definition~\ref{def:pointed+polyhedral}), one has to take the information of \emph{all} matchings occurring in the trees of $\mathcal{T}$ into account.
This corresponds to looking at all minors of $B$.
Equally, one can associate two different pointed polyhedral matching fields with $\mathcal{T}$ by swapping the factors of the product of simplices.
This leads to a pointed polyhedral matching field $(M_{\sigma})$ of size $(d,n+d)$ and $(N_{\sigma})$ of size $(n,d+n)$. 

$(M_{\sigma})$ and  $(N_{\sigma})$ are related combinatorially as follows.
Let $\mu$ be a matching on $\rows \sqcup \ground$.
Augment $\rows$ by another copy $\overline{\ground}$ of $\ground$ and $\ground$ by another copy $\overline{\rows}$ of $\rows$.
This leads to the two nodes sets $U = \rows \cup \overline{\ground}$ and $V = \overline{\rows} \cup \ground$, each union in this order.
Next, introduce another copy $\overline{\mu}$ between $\overline{\ground}$ and $\overline{\rows}$ by adding the edge $(\overline{j},\overline{i})$ for each edge $(i,j)$ in $\mu$.
Then, connect each isolated node to its copy.
We obtain a perfect matching $\tau$ on $U \sqcup V$, which can be thought as a permutation in $S_{d+n}$.
The two restricted matchings $\tau_{|\rows}$ and $\tau_{|\overline{\ground}}$ form a dual pair of matchings in $(M_{\sigma})$ and $(N_{\sigma})$, respectively.
The terminology is justified as they have complementary supports in $V$.
The notation is visualized in Figure~\ref{fig:extended+permutations+matchings}.

Finally, we let $\rchi_1$ be the chirotope induced by the matching field $(M_{\sigma})$ on $\rows \sqcup V = \rows \sqcup (\overline{\rows} \cup \ground)$ with the sign matrix $(I_{d,d} | A)$ and let $\rchi_2$ be the chirotope induced by the matching field $(N_{\sigma})$ on $\overline{\ground} \sqcup V = \overline{\ground} \sqcup (\overline{\rows} \cup \ground)$ with the sign matrix $(-\transpose{A} | I_{n,n})$.
This can also be interpreted as the sign matrix 
\[
H = 
\begin{pmatrix}
  I_{d,d} & A \\
  -\transpose{A} & I_{n,n}
\end{pmatrix}
\]
on the bipartite graph $K_{d+n,d+n}$.

\begin{figure}
  \centering
\newcommand{\bigraphsevencoord}[5]{
  \coordinate (u0) at (#1,#2+3*#3);
  \coordinate (u1) at (#1,#2+2*#3);
  \coordinate (u2) at (#1,#2+#3);
  \coordinate (u3) at (#1,#2);
  \coordinate (u4) at (#1,#2-#3);
  \coordinate (u5) at (#1,#2-2*#3);
  \coordinate (u6) at (#1,#2-3*#3);

    \coordinate (v0) at (#4,#2+3*#3);
  \coordinate (v1) at (#4,#2+2*#3);
  \coordinate (v2) at (#4,#2+#3);
  \coordinate (v3) at (#4,#2);
  \coordinate (v4) at (#4,#2-#3);
  \coordinate (v5) at (#4,#2-2*#3);
  \coordinate (v6) at (#4,#2-3*#3);

    \coordinate (w0) at (#5,#2+3*#3);
  \coordinate (w1) at (#5,#2+2*#3);
  \coordinate (w2) at (#5,#2+#3);
  \coordinate (w3) at (#5,#2);
  \coordinate (w4) at (#5,#2-#3);
  \coordinate (w5) at (#5,#2-2*#3);
  \coordinate (w6) at (#5,#2-3*#3);
}
\newcommand{\bigraphsevennodes}{
\node[VertexStyle]  at (u0){1};
\node[VertexStyle]  at (u1){2};
\node[VertexStyle]  at (u2){3};
\node[BoxVertex]  at (u3){4};
\node[BoxVertex]  at (u4){5};
\node[BoxVertex]  at (u5){6};
\node[BoxVertex]  at (u6){7};

\node[VertexStyle]  at (v0){1};
\node[VertexStyle]  at (v1){2};
\node[VertexStyle]  at (v2){3};
\node[BoxVertex]  at (v3){4};
\node[BoxVertex]  at (v4){5};
\node[BoxVertex]  at (v5){6};
\node[BoxVertex]  at (v6){7};

\node[VertexStyle]  at (w0){1};
\node[VertexStyle]  at (w1){2};
\node[VertexStyle]  at (w2){3};
\node[BoxVertex]  at (w3){4};
\node[BoxVertex]  at (w4){5};
\node[BoxVertex]  at (w5){6};
\node[BoxVertex]  at (w6){7};
}

\begin{tikzpicture}[scale=0.3]

  \bigraphsevencoord{0}{2}{2}{6}{12}

  \draw[EdgeStyle] (u0) to (v5);
  \draw[EdgeStyle] (u2) to (v4);
  \draw[EdgeStyle] (u4) to (v2);
  \draw[EdgeStyle] (u5) to (v0);

  \draw[EdgeStyle,dashed] (u1) to (v1);
  \draw[EdgeStyle,dashed] (u3) to (v3);
  \draw[EdgeStyle,dashed] (u6) to (v6);

  \draw[EdgeStyle] (v0) to (w3);
  \draw[EdgeStyle] (v1) to (w0);
  \draw[EdgeStyle] (v2) to (w4);
  \draw[EdgeStyle] (v3) to (w5);
  \draw[EdgeStyle] (v4) to (w1);
  \draw[EdgeStyle] (v5) to (w2);
  \draw[EdgeStyle] (v6) to (w6);
  
  \bigraphsevennodes

  \node at (3,8){$\tau$};
  \node at (9,8){$\pi$};

  \node at (3,4.1){$\mu$ \hspace{1.4em} $\overline{\mu}$};
  
\end{tikzpicture}
  \caption{The matchings arising in the proof of Proposition~\ref{prop:duality+chirotopes}. In this picture, we have $|\rows| = 3$ and $|\ground| = 4$. }
  \label{fig:extended+permutations+matchings}
\end{figure}

\begin{proposition} \label{prop:duality+chirotopes}
  The two chirotopes $\rchi_1$ and $\rchi_2$ describe a dual pair of oriented matroids. 
\end{proposition}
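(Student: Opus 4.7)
The plan is to verify the duality criterion of Lemma~\ref{lem:duality+chirotopes}: for every decomposition $V = \sigma \sqcup \sigma'$ with $|\sigma|=d$ and $|\sigma'|=n$, I must show that $\rchi_1(\sigma)\,\rchi_2(\sigma') = \sign(\sigma,\sigma')$, where $\sigma$ and $\sigma'$ are each listed in the natural order of $V$. Fix such a decomposition and write $I = \sigma \cap \overline{\rows}$ and $J = \sigma \cap \ground$. The combinatorial description preceding the proposition tells us that $M_\sigma$ and $N_{\sigma'}$ are exactly the two halves of a single perfect matching $\tau$ on $U \sqcup V$: $M_\sigma$ consists of the trivial edges $(i,\overline{i})$ for $\overline{i}\in I$ together with an underlying matching $\mu$ between $\rows\setminus I$ and $J$ arising from the triangulation $\mathcal{T}$, while $N_{\sigma'}$ consists of the trivial edges $(\overline{j},j)$ for $j\in\ground\setminus J$ together with the reverse matching $\overline{\mu} = \{(\overline{j},\overline{i}) : (i,j)\in\mu\}$.

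Next I compute the two sign products edge by edge. For $\rchi_1$ with sign matrix $(I_{d,d}\,|\,A)$ the trivial edges of $M_\sigma$ each contribute $+1$ while the edges of $\mu$ contribute $\prod_{(i,j)\in\mu}A_{i,j}$. For $\rchi_2$ with sign matrix $(-\transpose{A}\,|\,I_{n,n})$ the trivial edges of $N_{\sigma'}$ again contribute $+1$, while each reverse edge $(\overline{j},\overline{i})$ contributes $-A_{i,j}$. Since $A_{i,j}\in\{\pm1\}$, multiplying yields
\[
\rchi_1(\sigma)\,\rchi_2(\sigma') = (-1)^{|\mu|}\,\sign(M_\sigma)\,\sign(N_{\sigma'}).
\]

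To finish I will compute $\sign(\tau)$ in two ways. Viewing $\tau$ as a bijection $U \to V$ and reading off its image in the natural order of $U$ produces the concatenation of the image of $\rows$ under $M_\sigma$ with the image of $\overline{\ground}$ under $N_{\sigma'}$; sorting each half separately (a shuffle argument via Definition~\ref{def:sign+of+a+matching}) and then interleaving into the natural order of $V$ gives the identity
\[
\sign(\tau) = \sign(M_\sigma)\,\sign(N_{\sigma'})\,\sign(\sigma,\sigma').
\]
On the other hand, under the position-by-position identification $U \leftrightarrow V$ sending $i \leftrightarrow \overline{i}$ and $\overline{j} \leftrightarrow j$, the permutation $\tau$ fixes the positions labeled by $I$ and by $\ground\setminus J$ and swaps, for each $(i,j)\in\mu$, the position of $i\in\rows$ with the position of $j\in\ground$. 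These $|\mu|$ transpositions are pairwise disjoint, so $\sign(\tau) = (-1)^{|\mu|}$. Combining the two expressions for $\sign(\tau)$ with the first display gives $\sign(\sigma,\sigma') = \rchi_1(\sigma)\rchi_2(\sigma')$, as required.

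The only delicate point is the combinatorial correspondence in the first paragraph, which encapsulates precisely how the "swapped" matching field $(N_\sigma)$ relates back to $(M_\sigma)$ via a single underlying matching from $\mathcal{T}$; once this is in hand, the rest of the verification is transparent sign bookkeeping that packages cleanly through the intermediate permutation $\tau$.
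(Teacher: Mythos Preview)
Your proof is correct and follows essentially the same route as the paper's: both verify Lemma~\ref{lem:duality+chirotopes} by assembling $M_\sigma$ and $N_{\sigma'}$ into the single perfect matching $\tau$ on $U\sqcup V$, identify the edge-sign product as $(-1)^{|\mu|}$, and compute $\sign(\tau)$ independently. The paper packages the shuffle via an auxiliary permutation $\pi$ and obtains $\sign(\tau)=(-1)^{|\mu|}$ by counting inversions, whereas you factor through $\sign(M_\sigma)\sign(N_{\sigma'})$ directly and read off $\sign(\tau)$ from its decomposition into $|\mu|$ disjoint transpositions; these are cosmetic variations of the same argument.
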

\begin{proof}
  Let an arbitrary partition of $V\cong[d+n]$ into two ordered sets $P_1 := \{x_1 < \dots < x_d\}$ and $P_2 := \{x_{d+1} < \dots < x_{d+n}\}$ be given.
  This defines the bijection $\pi$ in $S_{d+n}$ which maps $(x_1,\dots,x_d,x_{d+1},\dots,x_{d+n})$ to $(1,2,\dots,d+n)$. 

  Let $\mu_1$ be the matching in $(M_{\sigma})$ on $P_1$ and let $\mu_2$ be the matching in $(N_{\sigma})$ on~$P_2$.
  By construction, their union is a perfect matching $\tau$ on $U\sqcup V$, coming from a matching $\mu$ on $\rows \sqcup \ground$. 
  Using Definition~\ref{def:sign+of+a+matching} and~\eqref{eq:sign+map+matching+field}, we obtain
  \begin{align*}
    \rchi_1(P_1) = \sign(\pi\cdot\tau_{|\rows})\prod_{e\in \tau_{|\rows}}H_e \quad \text{ and } \quad \rchi_2(P_2) = \sign(\pi\cdot\tau_{|\overline{\ground}})\prod_{e\in \tau_{|\overline{\ground}}}(-H_e) \;.
  \end{align*}
  As the concatenation $\pi \cdot \tau$ of the matchings $\tau$ and $\pi$ is disjoint on $\rows$ and $\overline{\ground}$, we obtain
  \[
  \rchi_1(P_1)\cdot \rchi_2(P_2) = \sign(\pi) \sign(\tau)\prod_{e\in \tau_{|\overline{\ground}}}(-H_e)\prod_{e\in \tau_{|\rows}}H_e \,.
  \]
  Using the definition of $\tau$ based on $\mu$, we get
  \[
  \prod_{e\in \tau_{|\overline{\ground}}}(-H_e)\prod_{e\in \tau_{|\rows}}H_e = \prod_{e\in \mu}(-H_e)\prod_{e\in \mu}H_e = (-1)^{|\mu|} .
  \]
Now we compute the sign of $\tau$ by counting inversions.
Every inversion within $\mu$ is paired up with an inversion within $\overline{\mu}$ and vice versa, while there are $|\mu|^2$ ($\equiv |\mu| \mod{2}$) many inversions between $\mu$ and $\overline{\mu}$, so $\sign(\tau) = (-1)^{|\mu|}$.
 Since $\pi$ is just the inverse of the permutation encoded by $(x_1,\dots,x_d,x_{d+1},\dots,x_{d+n})$, the proposition follows from Lemma~\ref{lem:duality+chirotopes}.
\end{proof}

\subsection{Signed Circuits and Cocircuits} \label{sec:signed+co+circuits}

Knowing that $\rchi$ is a chirotope, one can use the equivalence of axiom systems as described in \cite[Chapter 3]{BLSWZ:1993} to convert $\rchi$ into other objects associated to the oriented matroid. 
However, it is also possible to construct many of these objects more directly using graphs in the theory of matching fields.
We shall describe the construction of signed circuits and cocircuits here, while Section~\ref{sec:topes+covectors} will study covectors in general.
In particular, linkage pd-graphs form the analogue of signed circuits and Chow pd-graphs form the analogue of signed cocircuits.
Observe that a matching field can be recovered from linkage pd-graphs and equally well from Chow pd-graphs, see~\cite{LohoSmith:2020}, in analogy to the cryptomorphic description of an oriented matroid by signed circuits or signed cocircuits. 
Let $\tau\subset \ground$ be a $(d+1)$-subset.
Recall from Example~\ref{ex:linkage+matching+field} that the linkage pd-graph $T_{\tau}:=\bigcup_{\sigma\subset\tau} M_\sigma$ is a spanning tree of $K_{\rows,\tau}$ in which every node in $\rows$ has degree 2.
One can then define an auxiliary tree $\widetilde{T}_\tau$ whose node set is $\tau$ and two nodes are connected by an edge if they are both incident to the same $r\in \rows$ in $T_{\tau}$. 
Note that this is the same as identifying a node $j$ in $\tau$ with the matching on $\rows \sqcup (\tau \setminus \{j\})$ and connecting two matchings if they only differ by one edge. 
Such a tree $\widetilde{T}_\tau$, together with an edge labeling using $\rows$, is a {\em linkage tree} in~\cite{SturmfelsZelevinsky:1993}.  
We equip $\widetilde{T}_\tau$ with a sign labeling in the following way.
To the edge $(u,v) \in \widetilde{T}_\tau$ associated to the two edges $(u,r),(v,r)\in T_\tau$, we assign the negative of the product of the signs on $(u,r)$ and $(v,r)$. 

\begin{proposition} \label{prop:construction+signed+circuits}
For any $\tau$, there exist precisely the two sign assignments of $\tau$ such that two adjacent nodes in $\widetilde{T}_\tau$ are of the same sign if and only if the edge between them is positive. 
These are the two signed circuits supported on $\tau$. 
\end{proposition}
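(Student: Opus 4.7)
The plan is to reduce the statement to a kernel computation for a realization of the restricted oriented matroid, avoiding direct permutation-sign bookkeeping.

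For existence and uniqueness of the two compatible sign assignments, observe that $\widetilde{T}_\tau$ is a tree on $\tau$, so the compatibility condition propagates uniquely from any fixed root vertex: the sign is preserved across a positive edge and flipped across a negative edge. This gives exactly two assignments, related by a global sign flip, and both trivially satisfy the condition.

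For the identification with signed circuits, I would construct a real $d \times (d+1)$ matrix $B$ with rows indexed by $\rows$ and columns by $\tau$, setting $B_{r,j} = A_{r,j}$ when $(r,j) \in T_\tau$ and $0$ otherwise. Since $T_\tau$ is a tree, for each $d$-subset $\sigma \subset \tau$ the unique $\rows$-saturating matching of $T_\tau$ onto $\sigma$ is $M_\sigma$ itself (any other such matching would produce a cycle in the symmetric difference). So the expansion of $\det(B_\sigma)$ has exactly one non-zero term, whose sign equals $\sign(M_\sigma)\prod_{e \in M_\sigma} A_e = \rchi(\sigma)$. Hence the realizable chirotope of $B$ coincides with the restriction of $\rchi$ (from Theorem~\ref{thm:main}) to $\binom{\tau}{d}$, and the two signed circuits on $\tau$ of our oriented matroid are $\pm C$, where $C$ spans the one-dimensional kernel of $B$.

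Finally, the compatibility condition drops out by reading the rows of $B$. Each row $r$ has exactly two non-zero entries $A_{r,u}, A_{r,v}$, where $u,v$ are the two $T_\tau$-neighbours of $r$ and hence joined in $\widetilde{T}_\tau$ by the edge of sign $-A_{r,u}A_{r,v}$. The kernel equation $A_{r,u}C(u) + A_{r,v}C(v) = 0$ rearranges to $C(u)C(v) = -A_{r,u}A_{r,v}$, which is exactly the edge sign. By the tree argument of the first step, $\pm C$ are then the only two sign assignments with the required property. The one mildly delicate step is checking $\sign \det(B_\sigma) = \rchi(\sigma)$ for every $\sigma$; everything else is routine unpacking of the kernel structure.
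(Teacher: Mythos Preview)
Your proof is correct and takes a genuinely different route from the paper's. The paper invokes the chirotope--circuit conversion formula \cite[Theorem~3.5.5]{BLSWZ:1993} and then verifies, via explicit permutation-sign bookkeeping, that for two adjacent nodes $e_1,e_2$ in $\widetilde{T}_\tau$ the product $-\rchi(\tau\setminus\{e_2\})\rchi(\tau\setminus\{e_1\})$ equals the edge label $-A_{r_1e_1}A_{r_1e_2}$; the general case follows by induction along paths in the tree. You instead realize the restriction of $\rchi$ to $\binom{\tau}{d}$ by the $d\times(d+1)$ matrix $B$ supported on $T_\tau$ (this is exactly the local mechanism of Lemma~\ref{lem:TM_real}, applied to the linkage pd-graph rather than to a tree in $\Ical$), and then read the signed circuit off as the sign pattern of $\ker B$; the row equations of $B$ give the compatibility condition on $\widetilde{T}_\tau$ for free. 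Your approach is more self-contained (no external circuit formula needed) and is pleasingly consistent with the realization technique already central to the paper; the paper's argument, on the other hand, stays purely within oriented-matroid axiomatics and makes the dependence on $\rchi$ rather than on any particular realization more transparent. The one point you might make explicit is why the signed circuit of the realizable matroid of $B$ coincides with that of the ambient oriented matroid $\orientedmatroid$: this is immediate since, in a uniform oriented matroid, the signed circuit on a $(d{+}1)$-set $\tau$ is determined by the chirotope values on $\binom{\tau}{d}$, and these agree by your determinant computation.
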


\begin{proof}
Pick an arbitrary root for $\widetilde{T}_\tau$ and choose a sign for it, then the sign of every other node is uniquely determined by its parent. 
We show that a signed circuit with respect to $\rchi$ satisfies the constraint described in the statement.

Let $C$ be a signed circuit supported on $\tau$. Write $\rows=\{r_1<\ldots<r_d\},\tau=\{e_1<\ldots<e_{d+1}\}$ and let $x=e_i, y=e_j$ be any two nodes. 
By \cite[Theorem~3.5.5]{BLSWZ:1993}, 
$$C(e_j)=-C(e_i)\rchi(e_i,e_2,\ldots,\widehat{e_i},\ldots,\widehat{e_j},\ldots,e_{d+1})\rchi(e_j,e_2,\ldots,\widehat{e_i},\ldots,\widehat{e_j},\ldots,e_{d+1}).$$
By permuting elements in $\rows$ and $\tau$ if necessary, we may assume $i=1,j=2$, and $M_{\tau\setminus\{e_1\}}$ matches each $r_k$ to $l_{k+1}$ . 
We first assume that $x,y$ are adjacent in $\widetilde{T}_\tau$, in particular, $M_{\tau\setminus\{e_2\}}=M_{\tau\setminus\{e_1\}}\setminus\{r_1e_2\}\cup\{r_1e_1\}$.
If the edges $r_1e_1$ and $r_1e_2$ have the same sign, then $-\rchi(e_1,e_3,\ldots,e_{d+1})\rchi(e_2,\ldots,e_{d+1})=-\sign(M_{\tau\setminus\{e_2\}})\sign(M_{\tau\setminus\{e_1\}})=-1$ as the two chirotopes have the same products of signs of edges.
Now $-1=-A_{r_1e_1}A_{r_1e_2}$ is what we have labeled the edge between $x,y$ in $\widetilde{T}_\tau$.
The opposite case when $r_1e_1$ and $r_1e_2$ having opposite signs is similar, and we can extend the comparison of signs to any pair of nodes by induction.
\end{proof}

Now we consider signed cocircuits of our oriented matroid.
Recall that cocircuits correspond to vertices in an arrangement encoding an oriented matroid.
These appear in the framework of abstract tropical linear programming~\cite{Loho:2016} as certain trees with prescribed degree sequence, see also Remark~\ref{rem:connection+tropical+simplex}. 
The statement of \cite[Theorem~12.9]{Postnikov:2009} identifies the correct tree for our purpose.

\begin{proposition} \label{prop:chow+tree}
For any $(n-d+1)$-subset $\rho$, there exists a unique spanning tree $T_\rho$, encoding a cell in the triangulation, such that every node in $\rho$ is of degree 1 and every node in $\ground\setminus\rho$ is of degree $2$.
\end{proposition}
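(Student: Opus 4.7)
The plan is to translate the claim, via the Cayley trick, into a statement about the fine mixed subdivision $\Sigma$ of $n\ssimplex_{d-1}$ induced by the triangulation~$\mathcal{T}$. Recall that each tree $T$ encoding a full-dimensional simplex of $\mathcal{T}$ corresponds bijectively to a full-dimensional cell
\[
C_T = \sum_{j \in \ground} \conv\{\ee_i : i \in \neighbourhood_T(j)\}
\]
of $\Sigma$, with the degree of $j \in \ground$ in $T$ equal to $|\neighbourhood_T(j)|$. The required tree $T_\rho$ therefore corresponds to a full-dimensional cell of $\Sigma$ whose $j$-th Minkowski summand is a vertex of $\ssimplex_{d-1}$ for every $j \in \rho$ and a combinatorial edge of $\ssimplex_{d-1}$ for every $j \in \ground\setminus\rho$; such a cell is a translated $(d-1)$-zonotope.

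For existence, I would produce such a cell by analysing a generic point of $n\ssimplex_{d-1}$ of the form $q_\rho = \sum_{j\in\rho} v_j + \sum_{j\in\ground\setminus\rho} w_j$, where each $v_j$ is a generic vertex of $\ssimplex_{d-1}$ and each $w_j$ is a generic interior point of an edge. A dimension count (vertex summand contributes $0$, edge summand contributes $1$, total $d-1$) guarantees that the cell of $\Sigma$ containing $q_\rho$ in its relative interior is full-dimensional, and that its Minkowski decomposition exhibits exactly the prescribed pattern of vertex and edge summands.

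For uniqueness, the first route is to invoke condition~(3) of Proposition~\ref{prop:char-triang}: two distinct trees in $\mathcal{T}$ realising the same degree sequence on $\ground$ would yield two distinct zonotopal cells of $\Sigma$ of the same combinatorial shape, and comparing the $\rows$-saturating matchings they contain on suitably chosen bipartite subgraphs $I\sqcup J$ with $|I|=|J|$ would force $T=T'$, a contradiction. More cleanly, the entire proposition follows directly from \cite[Theorem~12.9]{Postnikov:2009}, which furnishes the bijection between $(n-d+1)$-subsets $\rho$ of $\ground$ and the trees of $\mathcal{T}$ with the prescribed degree sequence.

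The main obstacle is the uniqueness step: ruling out that two distinct zonotopal cells of $\Sigma$ realise the same Minkowski-decomposition pattern. The cleanest way around this is to appeal to Postnikov's theorem as a black box; a self-contained combinatorial argument would require exploiting the rigid structure of fine mixed subdivisions of $n\ssimplex_{d-1}$ beyond what follows from the three compatibility axioms of Proposition~\ref{prop:char-triang} alone.
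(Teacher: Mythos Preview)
The paper does not give its own proof of this proposition; it simply states that \cite[Theorem~12.9]{Postnikov:2009} ``identifies the correct tree for our purpose'' and records the proposition without further argument. Your proposal ultimately lands in the same place, explicitly invoking Postnikov's theorem as the clean route, so in that sense you match the paper.

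That said, the self-contained existence sketch you offer has a genuine gap. You propose to pick a point $q_\rho = \sum_{j\in\rho} v_j + \sum_{j\in\ground\setminus\rho} w_j$ with generic vertices $v_j$ and generic edge-interior points $w_j$, and then read off the cell of $\Sigma$ containing $q_\rho$. The problem is that the Minkowski decomposition of a cell of $\Sigma$ is dictated by the triangulation, not by the way you happened to write down $q_\rho$; almost every point of $n\ssimplex_{d-1}$ admits many expressions of the proposed form (with various choices of which vertex or which edge to use for each $j$), and there is no reason the cell of $\Sigma$ through $q_\rho$ should have summands matching your chosen $v_j,w_j$. The dimension count only says that \emph{if} a cell with the prescribed summand pattern exists then it is full-dimensional; it does not produce one. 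Producing one is precisely the content of Postnikov's result, so your sketch is circular unless you supply an independent argument that the prescribed zonotopal cell actually occurs in $\Sigma$. You correctly flag uniqueness as the harder self-contained step, but existence is not free either.
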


We say we {\em flip} a node in $\rows$ if we negate the row of $A$ indexed by the node.

\begin{proposition}\label{prop:cocircuit}
For any $\rho$, there exist precisely two flippings of $\rows$ such that every node of $\ground\setminus\rho$ is incident to edges of different signs in $T_\rho$. 
In each case, the signs of the edges incident to $\rho$ together give a signed cocircuit supported on $\rho$.
\end{proposition}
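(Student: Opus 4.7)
The plan is to reduce to a linear-algebra computation on the realizable oriented matroid associated to $A(T_\rho)$ via Lemma~\ref{lem:TM_real}, then glue the resulting local cocircuit to the global chirotope $\rchi$. The first step is a structural observation about $T_\rho$: the subgraph $T_\rho^\circ$ induced on $\rows\cup(\ground\setminus\rho)$ is itself a tree. A short count using the prescribed degrees shows $T_\rho^\circ$ has $2d-1$ vertices and $2(d-1)$ edges; being acyclic (as a subgraph of the tree $T_\rho$), it is automatically connected. Contracting each degree-$2$ node $j\in\ground\setminus\rho$ (whose neighbors both lie in $\rows$) yields a tree $H_\rho$ on the vertex set $\rows$ with $d-1$ edges, each labeled by $-A_{r,j}A_{r',j}$. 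The condition of the proposition on a flipping $s\in\{+,-\}^{\rows}$ becomes exactly $s_r s_{r'}=-A_{r,j}A_{r',j}$ on every edge of $H_\rho$, and since $H_\rho$ is a tree, this system has precisely two consistent solutions, differing by a global sign.

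For the cocircuit claim, I would invoke Lemma~\ref{lem:TM_real}: $\rchi$ restricted to the bases of $\Mcal(T_\rho)$ is the chirotope of the realizable oriented matroid of any real lift $A(T_\rho)$. The $d-1$ columns of $A(T_\rho)$ indexed by $\ground\setminus\rho$ have their supports on the edges of $T_\rho^\circ$, and the tree structure forces them to be linearly independent (elimination from leaves, which must lie in $\rows$ since every $j\in\ground\setminus\rho$ has degree $2$ in $T_\rho^\circ$). Hence $\rho$ supports a cocircuit of this realizable oriented matroid, namely $v=\transpose{c}A(T_\rho)$ for any nonzero $c\in\RR^d$ with $\transpose{c}A(T_\rho)_j=0$ for all $j\in\ground\setminus\rho$. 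These orthogonality conditions read $c_rA_{r,j}+c_{r'}A_{r',j}=0$ and, upon setting $s_r:=\sign(c_r)$, recover exactly the flipping condition above; the two scalings of $c$ correspond to the two flippings. At $y\in\rho$ the cocircuit value is $v(y)=c_{r_y}A_{r_y,y}$, whose sign $s_{r_y}\sign(A_{r_y,y})$ is precisely the sign of the edge $(r_y,y)$ after flipping.

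Finally, to see that this signed cocircuit of $\rchi_{T_\rho}$ is also a signed cocircuit of the global $\rchi$, note that any signed cocircuit with support $\rho$ is determined (up to overall sign) by the chirotope's values on the bases $(\ground\setminus\rho)\cup\{y\}$ for $y\in\rho$. Since these bases lie in $\Mcal(T_\rho)$ by the polyhedral matching field structure (the matching $M_{(\ground\setminus\rho)\cup\{y\}}$ sits inside $T_\rho$), the values of $\rchi$ and $\rchi_{T_\rho}$ agree on them, so the two signed cocircuits coincide. The step I expect to be most delicate is the structural claim that $T_\rho^\circ$ is a tree: it simultaneously delivers the linear independence that makes $\rho$ a cocircuit support in $A(T_\rho)$ and the essential uniqueness of the flipping $s$. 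Once that is in place, everything else follows from linear algebra in one cell together with the restriction identity $\rchi_{T_\rho}=\rchi$ on the relevant bases.
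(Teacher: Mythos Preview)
Your proof is correct and takes a genuinely different route from the paper's. Both arguments handle the first claim (precisely two flippings) the same way: your tree $H_\rho$ on $\rows$, obtained by contracting the degree-$2$ nodes of $\ground\setminus\rho$, is exactly the paper's auxiliary tree $\widetilde{T}_\rho$, and the flipping condition becomes a sign assignment on a tree with prescribed edge products, yielding two solutions.

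For the cocircuit claim the approaches diverge. The paper defers to the covector machinery of Section~\ref{sec:topes+covectors}: it applies Corollary~\ref{coro:more+general+covectors} (which in turn rests on Mandel's tope characterization and the weak-map argument of Proposition~\ref{prop:tope+from+tope+graph}) to conclude that $\psi_A(S,T_\rho)$ is a covector of $\orientedmatroid$, and since its support is the $(n-d+1)$-set $\rho$ it must be a cocircuit. Your argument is more elementary and self-contained: you work directly in the realizable piece $A(T_\rho)$, exhibit the cocircuit as $\transpose{c}A(T_\rho)$ for the essentially unique $c$ orthogonal to the $d-1$ independent columns over $\ground\setminus\rho$, read off its signs as the flipped edge signs, and then transfer to the global $\rchi$ via the standard formula expressing a cocircuit in terms of the chirotope values $\rchi((\ground\setminus\rho)\cup\{y\})$. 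This bypasses Section~\ref{sec:topes+covectors} entirely; the paper's route, on the other hand, situates the result as a special case of a general covector construction.

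One technical wrinkle worth flagging: Lemma~\ref{lem:TM_real} is stated for $T\in\Ical$, but $T_\rho$ need not lie in $\Ical$ (a node of $\rows$ can have degree $1$ in $T_\rho$). This does not damage your argument---the only use of the hypothesis $T\in\Ical$ in that lemma's proof is to guarantee an $\rows$-saturating matching, and you exhibit such matchings in $T_\rho$ directly (match $y$ to its unique neighbour $r_y$, then peel $T_\rho^\circ$ from its $\rows$-leaves). But you should note this when invoking the lemma, or simply observe directly that for $\sigma=(\ground\setminus\rho)\cup\{y\}$ the determinant $\det A(T_\rho)_{|\sigma}$ has a single surviving term, namely $M_\sigma$, so its sign equals $\rchi(\sigma)$.
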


It is possible to give a combinatorial proof in the same vein of Proposition~\ref{prop:construction+signed+circuits}, but as cocircuits are special instances of covectors, we apply the result in Section~\ref{sec:topes+covectors} (with no circular argument).
In particular, we give a more formal definition of flips there.
We put a separate discussion here because of the extra structural properties of cocircuits.

\begin{proof}
  We construct an auxiliary tree $\widetilde{T}_\rho$ whose node set is $\rows$.
  Two nodes are connected by an edge if they are both incident to some node in $\ground\setminus\rho$ in $T_\rho$ and we label the edge by the said node in $\ground\setminus\rho$.
Pick an arbitrary root for $\widetilde{T}_\rho$ and choose a flipping decision for it, then the flipping decision of every other node is uniquely determined by its parent.
Apply Corollary~\ref{coro:more+general+covectors} to $T_\rho$ and any of the two flipping decisions, we know that the signs of the edges incident to $\rho$ is a covector of the oriented matroid supported on $\rho$, i.e., it is a cocircuit.
\end{proof}

\begin{remark}
  Perhaps a more intuitive way to understand Proposition~\ref{prop:cocircuit} is to use the geometric picture in the follow-up of our paper (see Section~\ref{sec:ringel} for a rank 3 example).
  The mixed cell representing $T_\rho$, reflected to the correct orthant in the patchworking complex specified by the flipping, is dual to the intersection of $d-1$ pseudohyperplanes.
  Thus it recovers the intuition from classical hyperplane arrangements.
\end{remark}

\begin{example}
  We illustrate the construction of the signed circuits and cocircuits by continuing Example~\ref{ex:diagonal+2+4+signs}.
  The left graph in Figure~\ref{fig:covectors+with+signs} shows the linkage pd-graph $T_{\tau}$ for $\tau = \{1,2,3\}$ used in Proposition~\ref{prop:construction+signed+circuits}.
  The signs on the nodes in $\rows$ are chosen negative so that one can think of multiplying the signs along paths instead of forming a signed linkage tree as constructed before Proposition~\ref{prop:construction+signed+circuits}. 

  The right graph in Figure~\ref{fig:covectors+with+signs} depicts the spanning tree $T_{\rho}$ for $\rho = \{1,2,4\}$ as constructed in Proposition~\ref{prop:cocircuit}.
  The sign vector on $\rows$ corresponds to the flippings. 
  Note that the two sign vectors $(+,+)$ and $(-,-)$ on the nodes in $\rows$ correspond to the two orthants in which the (pseudo-)spheres corresponding to the elements in $\ground \setminus \rho$ intersect.  
\end{example}


\begin{figure}
  \centering
  \begin{tikzpicture}[scale=0.6]

\bigraphtwofourcoord{0}{0}{1.2}{0.6}{2.5};
\draw[EdgeStyle] (v1) to node[above=-0.1cm] {$+$} (w1);
\draw[EdgeStyle] (v2) to node[above=-0.1cm] {$-$} (w1);
\draw[EdgeStyle] (v2) to node[above=-0.1cm] {$-$} (w2);
\draw[EdgeStyle] (v3) to node[above=-0.1cm] {$-$} (w2);
\bigraphtwofournodes

\node[left=0.4cm of w1] {$-$};
\node[left=0.4cm of w2] {$-$};

\node[right=0.4cm of v1] {$+$};
\node[right=0.4cm of v2] {$+$};
\node[right=0.4cm of v3] {$-$};


\bigraphtwofourcoord{7}{0}{1.2}{0.6}{2.5};
\draw[EdgeStyle] (v1) to node[above=-0.1cm] {$+$}(w1);
\draw[EdgeStyle] (v2) to node[above=-0.1cm] {$-$}(w1);
\draw[EdgeStyle,red] (v3) to node[above=-0.1cm] {$+$}(w1);
\draw[EdgeStyle,red] (v3) to node[above=-0.1cm] {$-$}(w2);
\draw[EdgeStyle] (v4) to node[above=-0.1cm] {$+$} (w2);
\bigraphtwofournodes

\node[left=0.4cm of w1] {$+$};
\node[left=0.4cm of w2] {$+$};

\node[right=0.4cm of v1] {$+$};
\node[right=0.4cm of v2] {$-$};
\node[right=0.4cm of v4] {$+$};

\end{tikzpicture}
  \caption{Two pd-graphs corresponding to a signed circuit and a signed cocircuit. }
   \label{fig:covectors+with+signs}
\end{figure}

We give a further interpretation of the trees from Proposition~\ref{prop:chow+tree} in terms of {\em Chow pd-graphs}.
Recall that the latter is a minimal subset of the edges of $K_{d,n}$ intersecting all matchings. 
These were introduced in \cite[Eqn.~5.1]{SturmfelsZelevinsky:1993} and they were called `Chow covectors' in~\cite{LohoSmith:2020}. 

\begin{proposition}
The collection of edges $\Omega_\rho$ incident to $\rho$ in $T_\rho$ is the Chow pd-graph supported on $\rho$. 
\end{proposition}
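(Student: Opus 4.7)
I would show that $\Omega_\rho$ is a minimal transversal of the matching field $(M_\sigma)$ whose ground-support equals $\rho$; this identifies it with the Chow pd-graph of support $\rho$ (unique by the classification in \cite{LohoSmith:2020, SturmfelsZelevinsky:1993}). The support claim is immediate: since each $j \in \rho$ has degree one in $T_\rho$, $\Omega_\rho$ contains exactly one edge per $j \in \rho$, and the set of $\ground$-endpoints of $\Omega_\rho$ is precisely $\rho$.

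\emph{Transversal property.} Fix $\sigma \in \binom{\ground}{d}$. A pigeonhole count gives $|\sigma \cap \rho| \geq d + (n-d+1) - n = 1$. My aim is to exhibit $j \in \sigma \cap \rho$ with $(i_j,j) \in M_\sigma$, where $(i_j,j)$ is the unique edge of $T_\rho$ incident to $j$. To do so, I would let $T$ be a tree of the triangulation containing $M_\sigma$ and invoke the compatibility condition~(3) of Proposition~\ref{prop:char-triang}. The concrete plan is to construct a matching $M^*$ inside $T_\rho$, supported on some $\rows' \sqcup \sigma^*$ with $\rows' \subseteq \rows$, $\sigma^* \subseteq \sigma$, $|\rows'|=|\sigma^*|$, and $j \in \sigma^* \cap \rho$, by a leaf-peeling argument on $T_\rho$ that first removes the leaves in $\rho \setminus \sigma$ (in the spirit of Lemma~\ref{lem:tranversal+full+rank}). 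Property~(3) then forces $M^* = (M_\sigma)|_{\rows' \sqcup \sigma^*}$, which puts $(i_j,j) \in M_\sigma$.

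\emph{Minimality.} For each $j \in \rho$, set $\sigma_j := \{j\} \cup (\ground \setminus \rho)$; then $|\sigma_j| = d$ and $\sigma_j \cap \rho = \{j\}$. By the transversal property just established, $M_{\sigma_j} \cap \Omega_\rho \neq \emptyset$, and since the only edge of $\Omega_\rho$ whose $\ground$-endpoint lies in $\sigma_j$ is $(i_j,j)$, we obtain $M_{\sigma_j} \cap \Omega_\rho = \{(i_j,j)\}$. Deleting $(i_j,j)$ would therefore destroy the covering of $M_{\sigma_j}$, so $\Omega_\rho$ is inclusion-minimal.

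\textbf{Main obstacle.} The crux is the transversal step. The delicate point is that $T_\rho$ generically does \emph{not} contain $M_\sigma$ (equivalently, $\sigma$ need not be a basis of the transversal matroid $\Mcal(T_\rho)$), so property~(3) cannot be applied to the pair $(\rows, \sigma)$ directly. The combinatorial work lies in choosing the restricted sets $(\rows', \sigma^*)$ so that both $T_\rho$ and $T$ contain matchings on $\rows' \sqcup \sigma^*$ and that $\sigma^*$ meets $\rho$; I expect this step to require either an inductive leaf-peeling or a pigeonhole argument across the leaves of $T_\rho$ that lie in $\rho \setminus \sigma$.
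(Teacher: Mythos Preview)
Your plan takes a substantially harder route than the paper and, as you yourself note, is not complete: the transversal step is identified as the main obstacle but not carried out. The paper avoids this obstacle entirely by appealing to the \emph{original} definition of a Chow pd-graph from \cite[Eqn.~5.1]{SturmfelsZelevinsky:1993}, which does not ask for a transversal of all matchings. It only requires, for each $x\in\rho$ with neighbour $r$ in $T_\rho$, that the matching $M_{\sigma_x}$ on $\sigma_x=(\ground\setminus\rho)\cup\{x\}$ contain the edge $(r,x)$. The equivalence with ``minimal transversal of the matchings'' is then imported from \cite[Theorem~1]{BernsteinZelevinsky:1993}, so there is no need to handle an arbitrary~$\sigma$.

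The paper's argument is then a two-line construction inside $T_\rho$: using the auxiliary tree $\widetilde{T}_\rho$ (rooted at the missing node), one writes down a perfect matching between any $(d-1)$-subset $\rows'\subset\rows$ and $\ground\setminus\rho$; adjoining the leaf edge $(r,x)$ gives an $\rows$-saturating matching on $\sigma_x$ inside $T_\rho$, which by compatibility (Proposition~\ref{prop:char-triang}(3)) must equal $M_{\sigma_x}$. Notice that this is precisely the content of your minimality step, except that you deduce $M_{\sigma_j}\cap\Omega_\rho=\{(i_j,j)\}$ from the unproven transversal property, whereas the paper constructs $M_{\sigma_j}$ explicitly in $T_\rho$ and reads off the edge $(i_j,j)$ directly. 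If you replaced your appeal to ``the transversal property just established'' by this explicit construction, you would recover the paper's proof and could dispense with the transversal argument altogether.
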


\begin{proof}
We claim that there is a (unique) perfect matching $\widetilde{M}_{\rows'}$ between any $(d-1)$-subset $\rows'\subset \rows$ and $\ground\setminus\rho$ in $T_\rho$:
set the missing node of $\rows'$ as the root of $\widetilde{T}_\rho$ in the proof of Proposition~\ref{prop:cocircuit}, and match every other node with (the label of) the edge towards its parent.
From this, for any $x\in\rho$ with adjacent node $r$, $\widetilde{M}_{\rows\setminus r}\cup\{xr\}$ is a perfect matching between $\rows$ and $\ground\setminus\rho\cup\{x\}$.
This gives rise to the matchings required in the definition~\cite[Eqn.~5.1]{SturmfelsZelevinsky:1993} of a Chow pd-graph. 
\end{proof}

The next corollary follows from \cite[Theorem~1]{BernsteinZelevinsky:1993}, and is a matching field analogue of the matroidal fact that cocircuits are precisely the minimal transversals of bases.

\begin{corollary} \label{coro:cocircuit_transversal}
  The minimal transversals of a polyhedral matching field are given by the leaves of $T_{\rho}$ in $\rho$ where $\rho$ ranges over all $(n-d+1)$-subsets. 
\end{corollary}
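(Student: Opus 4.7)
The plan is to deduce the corollary as an essentially immediate consequence of the preceding proposition and the cited theorem of Bernstein and Zelevinsky. Recall that \cite[Theorem~1]{BernsteinZelevinsky:1993} asserts, for any linkage matching field, that the minimal transversals of the collection of matchings are precisely the Chow pd-graphs. Since every polyhedral matching field is linkage (as discussed after Definition~\ref{ex:polyhedral+matching+field}), this characterization applies to our setting.

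Given that, the proof amounts to matching up two parametrizations. On one hand, the minimal transversals, being Chow pd-graphs, are indexed by their supports in $\ground$, which are exactly the $(n-d+1)$-subsets $\rho$ (this is part of the Bernstein--Zelevinsky characterization, reflecting the fact that one needs at least $n-d+1$ columns of $K_{\rows,\ground}$ to block every $\rows$-saturating matching). On the other hand, Proposition~\ref{prop:chow+tree} produces, for each such $\rho$, a unique tree $T_\rho$ of the triangulation in which every element of $\rho$ is a leaf; and the immediately preceding proposition identifies $\Omega_\rho$, the set of edges of $T_\rho$ incident to these leaves, with the Chow pd-graph on $\rho$.

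Putting these together, the bijection $\rho \mapsto \Omega_\rho$ carries $(n-d+1)$-subsets of $\ground$ to Chow pd-graphs, hence to minimal transversals, and every minimal transversal arises this way. So the minimal transversals are exactly the edge sets incident to the leaves of the various $T_\rho$ lying in $\rho$, which is the statement of the corollary.

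The only step that is not literally a reference is verifying that no structure is lost when invoking \cite{BernsteinZelevinsky:1993} in the polyhedral (rather than general linkage) setting; but this is automatic, since the theorem is a purely combinatorial statement about linkage matching fields. Consequently there is no real obstacle: the corollary is a composition of Proposition~\ref{prop:chow+tree}, the preceding proposition on $\Omega_\rho$, and the Bernstein--Zelevinsky characterization.
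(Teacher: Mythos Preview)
Your proposal is correct and follows essentially the same approach as the paper: the paper simply states that the corollary follows from \cite[Theorem~1]{BernsteinZelevinsky:1993}, and you have spelled out exactly how, namely by combining the Bernstein--Zelevinsky identification of minimal transversals with Chow pd-graphs (valid for linkage, hence polyhedral, matching fields) together with the preceding proposition identifying $\Omega_\rho$ as the Chow pd-graph supported on $\rho$.
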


We also have a matching field analogue of the orthogonality of circuits and cocircuits, i.e., a circuit can not intersect a cocircuit by exactly one element.
Let $\tau, \rho\subset \ground$ be of size $d+1$ and $n-d+1$, respectively, and let $T_{\tau}$ and $T_{\rho}$ be the trees constructed for Proposition~\ref{prop:construction+signed+circuits} and Proposition~\ref{prop:cocircuit}, respectively.

\begin{proposition}
  $T_{\tau}$ contains at least two leafs of $T_{\rho}$.
\end{proposition}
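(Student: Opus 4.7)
The plan is to reduce the statement to a pigeonhole bound on $|\tau \cap \rho|$, since the leaves of $T_\rho$ lying in $\ground$ are precisely the elements of $\rho$. Explicitly, Proposition~\ref{prop:chow+tree} prescribes that every element of $\rho$ has degree exactly $1$ in $T_\rho$, so every element of $\rho$ is a leaf of $T_\rho$. The linkage pd-graph $T_\tau$ has vertex set $\rows \sqcup \tau$, so a leaf of $T_\rho$ belongs to $T_\tau$ as soon as it belongs to $\rho \cap \tau$. Thus it suffices to show $|\rho \cap \tau| \geq 2$.

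This in turn is a direct cardinality bound inside the $n$-element ground set $\ground$. Since $|\tau| = d+1$ and $|\rho| = n-d+1$, inclusion--exclusion gives
\[
|\rho \cap \tau| \;\geq\; |\rho| + |\tau| - |\ground| \;=\; (n-d+1) + (d+1) - n \;=\; 2 \enspace .
\]
Hence $\rho \cap \tau$ contributes at least two leaves of $T_\rho$ that are vertices of $T_\tau$, completing the argument.

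Conceptually, this is the matching-field reflection of the fact that for the uniform matroid $U_{d,n}$ a circuit (of size $d+1$) and a cocircuit (of size $n-d+1$) must intersect in at least two elements, strengthening the general oriented-matroid orthogonality that forbids a one-element intersection. I do not anticipate any serious obstacle: the real content of the proposition is the combinatorial identification already carried out in Propositions~\ref{prop:construction+signed+circuits} and~\ref{prop:cocircuit}, namely that the supports of the signed circuit and signed cocircuit produced by the matching field are exactly $\tau$ and $\rho$, so that the abstract circuit--cocircuit intersection property translates into the leaf-containment statement here.
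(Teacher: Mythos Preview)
Your argument hinges on reading ``leaf of $T_\rho$'' as a \emph{vertex} of degree~$1$, and then noting that any such vertex in $\rho\cap\tau$ is automatically a vertex of $T_\tau$. With that reading the inclusion--exclusion bound $|\rho\cap\tau|\ge 2$ is of course correct. However, this is not what the proposition is asserting. In this paper, as in Corollary~\ref{coro:cocircuit_transversal}, ``the leaves of $T_\rho$'' denotes the \emph{leaf edges}, i.e.\ the edges of the Chow pd-graph $\Omega_\rho\subset T_\rho$ incident to the degree-$1$ vertices in~$\rho$. The claim is that the edge set of $T_\tau$ contains at least two edges of $\Omega_\rho$. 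This is the genuine matching-field analogue promised in the preamble: the matroidal statement $|\tau\cap\rho|\ge 2$ concerns only the supports, whereas the proposition is about the pd-graphs themselves.

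Your counting does not establish the edge version. Knowing $e\in\tau\cap\rho$ tells you only that $e$ is a vertex of $T_\tau$; the unique edge $(r,e)$ of $T_\rho$ at $e$ need not be among the edges of $T_\tau$ at~$e$. The paper closes this gap via the transversal property: $\Omega_\rho$ is a minimal transversal of the matching field (Corollary~\ref{coro:cocircuit_transversal}), so any matching $M_\sigma\subset T_\tau$ must share an edge $(r,e)$ with $\Omega_\rho$; then the matching $M_{\tau\setminus\{e\}}\subset T_\tau$ avoids the vertex $e$ entirely, hence must meet $\Omega_\rho$ in a second, different edge. That step---using two matchings inside $T_\tau$ to hit $\Omega_\rho$ twice---is precisely what is missing from your proposal.
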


\begin{proof}
Pick an arbitrary $\sigma\subset\tau$ of size $d$.
By Corollary~\ref{coro:cocircuit_transversal}, $M_\sigma\subset T_{\tau}$ contains at least an edge $re$ of $T_{\rho}$.
Now $M_{\tau\setminus\{e\}}\subset T_{\tau}\setminus\{re\}$ must contain at least another.
\end{proof}

\subsection{Topes and Covectors} \label{sec:topes+covectors}

In the development of tropical oriented matroids, the tropical analogue of \emph{topes} and \emph{covectors} played an important role.
We adapt the notion to our terminology and provide the connection with the oriented matroid $\orientedmatroid$ represented by the chirotope~$\rchi$. 

Fix a polyhedral matching field $(M_{\sigma})$ extracted from the special trees $\Ical$ of a triangulation $\mathcal{T}$ of $\ssimplex_{d-1}\times\ssimplex_{n-1}$, and a sign matrix $A\in\{-1,1\}^{\rows\times\ground}$.
A \emph{covector pd-graph} is a subgraph of a tree in $\mathcal{T}$ with no isolated node in $\ground$.
A \emph{tope pd-graph} is a covector pd-graph for which each node in $\ground$ has degree $1$; it is called \emph{inner tope pd-graph}, if it is actually a subgraph of a tree in $\Ical$.
Note that linkage pd-graphs and the trees identifying Chow pd-graphs of a polyhedral matching field are special cases of covector pd-graphs. 


\begin{definition} \label{def:signed+forest}
Given $S\in\{-1,0,1\}^{\rows}$ and $F\subseteq\rows\times\ground$, we define the sign matrix $SA_{F}\in\left\{ -1,0,1\right\} ^{\rows\times\ground}$
by
\[
(SA_{F})_{i,j}=\begin{cases}
S_{i}A_{i,j}, & (i,j)\in F,\\
0, & \text{otherwise.}
\end{cases}
\]
\end{definition}

\begin{definition} \label{def:triang+to+covectors+OM}
  Given a subgraph $F$ of $K_{\rows,\ground}$
  and a sign vector $S\in\{-1,0,1\}^{\rows}$, define the sign vector $\psi_A(S,F)= Z \in\{-1,0,1\}^{\ground}$, where 
\[
Z_{j}=
\begin{cases}
0, & \text{column \ensuremath{j} of \ensuremath{SA_{F}} has positive and negative entries, or all zeros}\\
1, & \text{column \ensuremath{j} of \ensuremath{SA_{F}} has only non-negative entries, and is non-zero}\\
-1, & \text{column \ensuremath{j} of \ensuremath{SA_{F}} has only non-positive entries, and is non-zero.}
\end{cases}
\]
\end{definition}

\begin{example} \label{ex:covector+tope+signs}
  The graphs in Figure~\ref{fig:reducing+covector+to+tope} together with the sign vector $S = (0,-1,1)$ and sign matrix $\widetilde{A} = (I_{3,3} | A)$ with 
  \[
  A =
  \begin{pmatrix}
    1 & 1 & -1 \\
    -1 & -1 & 1 \\
    -1 & -1 & -1
  \end{pmatrix} 
  \]
  give rise to the matrices 
  \[
  F = 
  \begin{pmatrix}
    0 & 0 & 0 & 0 & 0 & 0 \\
    0 & -1 & 0 & 1 & 1 & 0 \\
    0 & 0 & 1 & -1 & 0 & -1
  \end{pmatrix}
  \quad
  \text{ and }
  \quad
  T = 
    \begin{pmatrix}
    -1 & 0 & 0 & 0 & 0 & 0 \\
    0 & -1 & 0 & 1 & 1 & 0 \\
    0 & 0 & 1 & 0 & 0 & -1
  \end{pmatrix} . 
    \]
    Their images are
    \[
    X = \psi_{\widetilde{A}}(S,F) = (0,-1,1,0,1,-1) \text{ and } Y = \psi_{\widetilde{A}}(S,F) = (-1,-1,1,1,1,-1) .
    \]
    These sign vectors fulfill $Y \geq X$ and highlight the construction used in the proof of Proposition~\ref{prop:covectors+from+topes}. 
\end{example}

\begin{figure}[htb]
  \centering
\begin{tikzpicture}[scale=0.5]

  \bigraphthreesixcoord{0}{0}{1.2}{1.3}{2.5}
  \draw[EdgeStyle] (w2) to (v2);
  \draw[EdgeStyle] (w2) to (v4);
  \draw[EdgeStyle] (w2) to (v5);
  \draw[EdgeStyle] (w3) to (v3);
  \draw[EdgeStyle] (w3) to (v4);
  \draw[EdgeStyle] (w3) to (v6);
  \bigraphthreesixnodes

  \bigraphthreesixcoord{8}{0}{1.2}{1.3}{2.5}
  \draw[EdgeStyle] (w1) to (v1);
  \draw[EdgeStyle] (w2) to (v4);
  \draw[EdgeStyle] (w2) to (v2);
  \draw[EdgeStyle] (w2) to (v5);
  \draw[EdgeStyle] (w3) to (v3);
  \draw[EdgeStyle] (w3) to (v6);
  \bigraphthreesixnodes

\end{tikzpicture}

  \caption{On the left is a graph on $\rows\sqcup\widetilde{\ground}$ with $|\rows| = |\ground| = 3$ and on the right is a related inner tope pd-graph discussed in Example~\ref{ex:covector+tope+signs}. }
  \label{fig:reducing+covector+to+tope}
\end{figure}

For an oriented matroid $\orientedmatroid$, let $\mathcal{V}^*(\orientedmatroid)$ denote the poset of covectors of $\orientedmatroid$. 
There is a \emph{weak map} $\orientedmatroid\rightsquigarrow\omtwo$ between two oriented matroids $\orientedmatroid,\omtwo$ on $\ground$, if for all covectors $X$ of $\omtwo$, there exists a covector $Y$ of $\orientedmatroid$ such that $Y\geq X$.
\footnote{See \cite[Theorem 0.1]{Anderson:2001} for a more modern definition: there is a weak map if there is a surjective poset map $g:\mathcal{V}^*(\orientedmatroid)\rightarrow \mathcal{V}^*(\omtwo)$ such that $g(X)\leq X$ for all $X\in\mathcal{V}^*(\orientedmatroid)$. }

If $\orientedmatroid$ and $\omtwo$ have the same rank, then this is the same as saying that, up to a global sign change, $\rchi_{\orientedmatroid}\geq\rchi_{\omtwo}$ by \cite[Proposition 7.7.5]{BLSWZ:1993}.
In particular, if $\rchi_{\omtwo}$ is a chirotope obtained by restricting $\rchi_{\orientedmatroid}$ to a cell in some matroid subdivision  of $\orientedmatroid$, such as in Lemma \ref{lem:TM_real}, then $\orientedmatroid\rightsquigarrow\omtwo$ is a weak map.

\begin{proposition} \label{prop:tope+from+tope+graph}
  For each sign vector $S\in\{-1,1\}^{\rows}$ and each inner tope pd-graph~$F$, $\psi_A(S,F)$ is a tope of the oriented matroid given by the chirotope $\rchi$.
\end{proposition}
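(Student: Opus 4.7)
The plan is to establish $\psi_A(S, F)$ as a tope of the realizable oriented matroid underlying any choice of matrix $A(T)$ as in Lemma~\ref{lem:TM_real}, for some $T \in \Ical$ with $F \subseteq T$, and then lift this to a tope of $\orientedmatroid$ via the weak map $\orientedmatroid \rightsquigarrow \orientedmatroid_{A(T)}$ noted in the paragraph preceding the proposition. Such a $T$ exists by the very definition of an inner tope pd-graph.

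To realize $\psi_A(S, F)$ concretely as a tope, I would pick a small parameter $\epsilon > 0$ and set
\[
A(T)_{ij} \;:=\; \begin{cases} A_{ij} & \text{if } (i, j) \in F, \\ \epsilon\, A_{ij} & \text{if } (i, j) \in T \setminus F, \\ 0 & \text{otherwise.} \end{cases}
\]
By Lemma~\ref{lem:TM_real}, this is a valid realization of $\rchi_T$ for any $\epsilon > 0$. Next, take $c := S \in \RR^d$. For every $j \in \ground$ let $f(j) \in \rows$ denote the unique neighbour of $j$ in $F$ (existing because $j$ has degree $1$ in the tope pd-graph $F$). Then
\[
(c^\top A(T))_j \;=\; S_{f(j)} A_{f(j), j} \;+\; \epsilon \sum_{i :\, (i,j) \in T \setminus F} S_i\, A_{ij},
\]
whose leading term $S_{f(j)} A_{f(j), j} = \psi_A(S, F)_j$ is already equal to $\pm 1$. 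Hence for every sufficiently small $\epsilon > 0$ we have $\sign(c^\top A(T)) = \psi_A(S, F)$, so $\psi_A(S, F)$ is a tope of the realizable oriented matroid represented by $A(T)$.

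It remains to apply the weak map. Since $\psi_A(S, F)$ is in particular a covector of the realizable oriented matroid, the weak map produces a covector $Y$ of $\orientedmatroid$ with $Y \geq \psi_A(S, F)$; because $\psi_A(S, F)$ already has no zero entries, this forces $Y = \psi_A(S, F)$. As $\orientedmatroid$ is uniform and in particular has no loops, any zero-free covector is a tope, so $\psi_A(S, F)$ is a tope of $\orientedmatroid$. The only mildly delicate point is the perturbative choice of $A(T)$ making the $F$-edges dominate the $T \setminus F$-edges in every column simultaneously, but since there are only finitely many strict inequalities, a single sufficiently small $\epsilon$ suffices; I do not anticipate a substantive obstacle.
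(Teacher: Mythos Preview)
Your proposal is correct and follows essentially the same approach as the paper: pick $T\in\Ical$ with $F\subseteq T$, choose a realization $A(T)$ in which the $F$-entries dominate the $T\setminus F$-entries (the paper uses $\pm 2d$ versus $\pm 1$ rather than your $\pm 1$ versus $\pm\epsilon$, which is the same idea up to scaling), read off $\psi_A(S,F)$ as the sign pattern of $S^\top A(T)$, and then lift via the weak map $\orientedmatroid\rightsquigarrow\omtwo$. Your final paragraph spelling out why a full-support covector of $\orientedmatroid$ must be a tope is a detail the paper leaves implicit.
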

\begin{proof}
Pick a tree $T \in \Ical$ that contains $F$ and denote by $\omtwo$ the oriented matroid representing $\rchi_T$.
By Lemma~\ref{lem:TM_real}, $\omtwo$ is realized by any matrix of the form $A(T)$, in which we choose $\pm 1$ for entries in $T$ but not $F$, and $\pm 2d$ for entries in $F$.
This ensures that the sign pattern of the row space element $\transpose{S} \cdot A(T)$ equals $\psi_A(S,F)$, hence $\psi_A(S,F)$ is a tope of $\omtwo$.
From the discussion above, there is a weak map $\orientedmatroid\rightsquigarrow\omtwo$ , so $\psi_A(S,F)$ is a tope of $\orientedmatroid$ as well.
\end{proof}

To extend this correspondence to more general covectors, we make use of a result by Mandel.

\begin{theorem}[{\cite[Thm.~4.2.13]{BLSWZ:1993}}]
  Let $\Topes$ be the set of topes of the oriented matroid~$\orientedmatroid$.
  Then $X \in \{-1,0,1\}^{\ground}$ is a covector of $\orientedmatroid$ exactly if $X \circ \Topes \subseteq \Topes$. 
\end{theorem}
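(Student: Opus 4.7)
The forward direction follows immediately from the standard covector axioms. If $X$ is a covector and $T$ is a tope, then $X \circ T$ is a covector by the composition axiom, and its support is $\underline{X} \cup \underline{T}$. Since $X$ vanishes on every loop (as any covector does) and $T$, being a tope, already achieves the maximal possible covector support, we have $\underline{X \circ T} = \underline{T}$, so $X \circ T$ is again a tope.

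For the reverse direction my plan is to argue the contrapositive: if $X$ is not a covector, exhibit a tope $T$ with $X \circ T \notin \Topes$. The tool I would invoke is that the tope set of an oriented matroid already determines $\orientedmatroid$ (via the Las Vergnas tope axioms), so the condition $X \circ \Topes \subseteq \Topes$ should be strong enough to force $X$ into $\mathcal{V}^*(\orientedmatroid)$. Concretely, the hypothesis, combined with symmetry applied to $-T$, says that both $X \circ T$ and $(-X) \circ T$ are topes for every $T \in \Topes$; one may then attempt to recover $X$ as the coordinatewise ``agreement'' of these two topes, since they differ exactly off $\underline{X}$ and match $X$ on $\underline{X}$. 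Once $X$ is realized in this form, the covector elimination axiom for $X$ paired with an arbitrary covector $Y$ should be derivable by extending $Y$ to a tope $T \geq Y$, applying tope-level elimination to $X \circ T$ and $(-X) \circ T$, and restricting back.

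The main obstacle is precisely this last step: promoting tope-level eliminations to eliminations involving $X$ against a general covector $Y$. A cleaner but heavier alternative is to pass through the Folkman--Lawrence topological representation, where topes are the chambers of a pseudosphere arrangement. In that language, the hypothesis says that $X$ specifies a consistent way to ``snap'' every chamber onto a common face, which is geometrically only possible when $X$ itself is a face of the arrangement; equivalently, $X$ is a covector. Either route requires structural input beyond what the compositional condition alone provides, which is why this statement is invoked here as a nontrivial cited result rather than derived from scratch.
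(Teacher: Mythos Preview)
The paper does not prove this theorem at all; it is quoted as a result of Mandel from \cite[Thm.~4.2.13]{BLSWZ:1993} and used as a black box to obtain the subsequent corollary. There is therefore no in-paper proof to compare your proposal against, and you correctly recognize this in your final sentence.

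On the substance of your attempt: the forward direction is fine and standard. For the reverse direction you give only an outline, and the gap you flag is real. Knowing that $X\circ T$ and $(-X)\circ T$ are both topes for every tope $T$ does let you recover $X$ as their coordinatewise agreement, but it does not by itself let you verify the covector elimination axiom for $X$ against an arbitrary covector $Y$; the step ``extend $Y$ to a tope, eliminate at the tope level, then restrict back'' loses exactly the control over the zero set that elimination requires. The actual proof in \cite{BLSWZ:1993} is not short and draws on the structure theory of the big face lattice developed earlier in that chapter. Your proposed fallback via the Topological Representation Theorem would work in principle but is at least as deep as the statement itself, so it is not a genuine shortcut.
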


Recall that for $X,Y \in \{-1,0,1\}^{\ground}$, the composition $X \circ Y$ agrees with $X$ in all positions $e \in \ground$ with $X_e \neq 0$, and agrees with $Y$ otherwise.

\begin{corollary} \label{coro:covectors+defined+by+topes}
  Let $X\in\{-1,0,1\}^{\ground}$ be a sign vector such that every $Y\in\{-1,1\}^{\ground}$ satisfying $Y\geq X$ is a tope of $\orientedmatroid$. Then $X$ is a covector of $\orientedmatroid$.
\end{corollary}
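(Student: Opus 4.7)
The plan is to apply Mandel's theorem (quoted immediately above the corollary) directly: it suffices to show $X\circ Y\in\Topes$ for every tope $Y\in\Topes$.

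First I would unpack the composition: by definition $X\circ Y$ agrees with $X$ on every coordinate where $X$ is nonzero, and agrees with $Y$ elsewhere. Since $Y\in\{-1,1\}^{\ground}$ has no zero coordinate, neither does $X\circ Y$, so $X\circ Y\in\{-1,1\}^{\ground}$. Moreover, by construction $(X\circ Y)_e=X_e$ whenever $X_e\neq 0$, so $X\circ Y\geq X$ in the sign-vector partial order (where $0$ is below $\pm 1$).

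Now the hypothesis of the corollary applies: every element of $\{-1,1\}^{\ground}$ that lies above $X$ is a tope of $\orientedmatroid$. In particular $X\circ Y\in\Topes$. This shows $X\circ\Topes\subseteq\Topes$, and Mandel's theorem then yields that $X$ is a covector of $\orientedmatroid$.

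There is no real obstacle here: the corollary is essentially a direct repackaging of Mandel's characterization, since the set of sign vectors $Y\in\{-1,1\}^{\ground}$ with $Y\geq X$ is exactly the set $\{X\circ Y : Y\in\{-1,1\}^{\ground}\}$, which contains $X\circ\Topes$. The only subtlety to flag is that Mandel's statement requires $X\circ Y\in\Topes$ for \emph{all} topes $Y$, not merely for one; but this is automatic from the hypothesis, which asks for the stronger condition that every full sign vector above $X$ be a tope.
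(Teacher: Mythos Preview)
Your proof is correct and matches the paper's intended argument: the paper states the corollary immediately after Mandel's theorem without an explicit proof, precisely because it follows by the short computation you give---for any tope $Y$ the composition $X\circ Y$ is a full sign vector above $X$, hence a tope by hypothesis, so $X\circ\Topes\subseteq\Topes$. The only implicit assumption you use is that topes are full sign vectors, which holds here since the oriented matroids in play are uniform (hence loop-free).
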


We now consider the pointed polyhedral matching field $\widetilde{(M_{\sigma})}$ encoding the starting triangulation, as well as the sign matrix $\widetilde{A} := (I_{d,d} | A)$. 
By Theorem~\ref{thm:main}, they induce an oriented matroid $\widetilde{\orientedmatroid}$.
Let $F$ be a subgraph of a tree in $\widetilde{\mathcal{T}}$ without isolated nodes in $\ground \subset \widetilde{\ground}$ and such that a node in $\widetilde{\rows} \subset \widetilde{\ground}$ is isolated only if the corresponding node in $\rows$ is isolated as well. 
Furthermore, let $S \in \{-1,0,1\}^{\rows}$ be a sign vector whose support contains the set of non-isolated nodes of $F$ in $\rows$.

\begin{proposition} \label{prop:covectors+from+topes}
  The sign vector $\psi_{\widetilde{A}}(S,F)$ is a covector of $\widetilde{\orientedmatroid}$. 
\end{proposition}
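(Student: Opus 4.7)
The plan is to apply Corollary~\ref{coro:covectors+defined+by+topes} to $X:=\psi_{\widetilde{A}}(S,F)$: it suffices to show that every $Y\in\{-1,1\}^{\widetilde{\ground}}$ with $Y\geq X$ is a tope of $\widetilde{\orientedmatroid}$. Since the pointed polyhedral matching field is itself a polyhedral matching field on the enlarged ground set $\widetilde{\ground}$, Proposition~\ref{prop:tope+from+tope+graph} applies directly with $\widetilde{A}$ in place of $A$, and I only have to exhibit an inner tope pd-graph $F'$ of the pointed triangulation together with a sign vector $S'\in\{-1,1\}^{\rows}$ such that $\psi_{\widetilde{A}}(S',F')=Y$.

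To construct $F'$, first fix a tree $T\in\widetilde{\mathcal{T}}$ containing $F$; by the pointing construction every $\rows$-node of $T$ has degree at least two, so $T\in\widetilde{\Ical}$. For each $j\in\widetilde{\ground}$ I select one incident edge $(i_j,j)\in T$ as follows. If $j$ is non-isolated in $F$, the support hypothesis on $S$ forces column $j$ of $SA_F$ to be non-zero; when $X_j\neq 0$ the column is unisigned with sign $X_j=Y_j$ and any $(i_j,j)\in F$ works, while when $X_j=0$ both signs occur and I choose $(i_j,j)\in F$ with $S_{i_j}\widetilde{A}_{i_j,j}=Y_j$. If $j$ is isolated in $F$, the hypothesis on $F$ forces $j\in\widetilde{\rows}$ and its $\rows$-copy $i$ to be isolated as well; set $i_j:=i$ so that $(i_j,j)\in T$ is the pointing edge. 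Let $F':=\{(i_j,j):j\in\widetilde{\ground}\}$ and define $S'\in\{-1,1\}^{\rows}$ by $S'_i:=S_i$ when $i$ is non-isolated in $F$ and $S'_i:=Y_{i'}$ (with $i'\in\widetilde{\rows}$ the copy of $i$) otherwise.

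By construction $F'\subseteq T\in\widetilde{\Ical}$ and every $\widetilde{\ground}$-node has degree one in $F'$, so $F'$ is an inner tope pd-graph. In each column $j$ of $S'\widetilde{A}_{F'}$ the unique non-zero entry sits at row $i_j$, and a short case check---using $\widetilde{A}_{i,i'}=1$ in the pointing-edge case---gives $(S'\widetilde{A}_{F'})_{i_j,j}=Y_j$; hence $\psi_{\widetilde{A}}(S',F')=Y$ as required. The main delicacy is to verify that the ``all-zero column'' scenario arises only at isolated nodes in $\widetilde{\rows}$---never in $\ground$ and never at non-isolated nodes---so that the pointing structure of $T$ supplies a canonical substitute edge on which $S'$ is unconstrained by $F$. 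This is exactly where the isolation hypotheses on $F$ and the support hypothesis on $S$ combine, and it is what guarantees that the free assignments of $S'$ at the isolated rows remain mutually consistent.
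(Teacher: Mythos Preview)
Your argument is correct and follows essentially the same route as the paper: reduce to Corollary~\ref{coro:covectors+defined+by+topes}, then for each full $Y\geq X$ manufacture a pair $(S',F')$ with $F'$ an inner tope pd-graph of the pointed triangulation and $\psi_{\widetilde{A}}(S',F')=Y$, by completing $S$ on the isolated rows via the $\widetilde{\rows}$-coordinates of $Y$ and trimming each $\widetilde{\ground}$-column of $F$ down to a single edge with the correct sign. Your case analysis is in fact a bit more careful than the paper's sketch: you define $S'$ according to whether $i\in\rows$ is isolated in $F$ rather than whether $S_i=0$, which cleanly handles the possibility that $\supp(S)$ strictly contains the non-isolated $\rows$-nodes of $F$.
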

\begin{proof}
  By Corollary~\ref{coro:covectors+defined+by+topes}, it suffices to show that every full sign vector $Y\geq \psi_{\widetilde{A}}(S,F)$ is a tope of $\widetilde{\orientedmatroid}$.
  This amounts to finding a full sign vector $S'$ and an inner tope pd-graph $T$ such that $Y=\psi_{\widetilde{A}}(S',T)$ by Proposition~\ref{prop:tope+from+tope+graph}, since the extended trees in $\widetilde{\mathcal{T}}$ have degree at least $2$ everywhere in $\rows$. 
We first construct $S'$ by setting each zero coordinate $i\in\rows$ of $S$ to $Y_i$, and we include those missing edges between $\rows$ and its copy $\widetilde{\rows}$.
Now for each coordinate $j\in\ground$ of $\psi_{\widetilde{A}}(S,F)$, the $j$-th column of $S\widetilde{A}_{F}$ must contain an entry whose sign equals $Y_j$, so we remove all edges incident to $j$ except the one corresponding to that entry.
This results in an inner tope pd-graph $T$, and from our construction it is easy to see that $Y=\psi_{\widetilde{A}}(S',T)$ as desired.
\end{proof}

Restricting the covectors to $\ground$ yields the following. 

\begin{corollary} \label{coro:more+general+covectors}
  For a covector pd-graph $F$, $\psi_A(S,F)$ is a covector of $\orientedmatroid$ for every sign vector $S$.
\end{corollary}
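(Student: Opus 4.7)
The strategy is to lift the claim to the pointed matching field, apply Proposition~\ref{prop:covectors+from+topes} there, and then restrict back to $\ground$. For this I would first argue that $\orientedmatroid$ is the deletion $\widetilde{\orientedmatroid}\setminus\widetilde{\rows}$. Indeed, for every $\sigma\in\binom{\ground}{d}$, the matching $\widetilde{M}_\sigma$ of the pointed matching field coincides with $M_\sigma$, and since $\widetilde{A}|_{\rows\times\ground}=A$, the defining formula~\eqref{eq:sign+map+matching+field} shows that the chirotope of $\widetilde{\orientedmatroid}$ restricts on $\binom{\ground}{d}$ to the chirotope $\rchi$ of $\orientedmatroid$. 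By the standard behavior of covectors under deletion, every covector of $\widetilde{\orientedmatroid}$ restricts to a covector of $\orientedmatroid$ on $\ground$.

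Given $F$ as in the statement (a subgraph of some $T\in\mathcal{T}$ with no isolated $\ground$-node), and $S$ whose support meets the hypothesis of Proposition~\ref{prop:covectors+from+topes}, I would lift $F$ to $\widetilde{F}:=F\cup\{(i,\widetilde{i}) : i\in\rows\text{ non-isolated in }F\}\subseteq\widetilde{T}\in\widetilde{\mathcal{T}}$. Three checks are needed: $\widetilde{F}$ has no isolated $\ground$-node (inherited from $F$); a node $\widetilde{i}\in\widetilde{\rows}$ is isolated in $\widetilde{F}$ exactly when $i$ is isolated in $F$ (equivalently, in $\widetilde{F}$); and the non-isolated $\rows$-nodes of $\widetilde{F}$ are the same as those of $F$, so the required support hypothesis on $S$ transfers unchanged. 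Proposition~\ref{prop:covectors+from+topes} then yields that $\psi_{\widetilde{A}}(S,\widetilde{F})$ is a covector of $\widetilde{\orientedmatroid}$.

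Finally, I would verify that the restriction to $\ground$ coincides with $\psi_A(S,F)$: for $j\in\ground$ the edges of $\widetilde{F}$ contributing to the $j$-th column of $S\widetilde{A}_{\widetilde{F}}$ are exactly the edges of $F$ incident to $j$, and $\widetilde{A}$ agrees with $A$ on $\rows\times\ground$, so the two columns have identical sign patterns. Combined with the deletion identification from the first paragraph, this identifies $\psi_A(S,F)$ as a covector of $\orientedmatroid$. The main obstacle I anticipate is the first step: one must unpack the construction of $\widetilde{\orientedmatroid}$ carefully enough to see that deletion of $\widetilde{\rows}$ truly recovers $\orientedmatroid$, and to invoke the correct cryptomorphism between covectors and chirotopes compatible with deletion. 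Once this compatibility is recorded, the remainder is a routine bookkeeping exercise through the definition of $\psi$.
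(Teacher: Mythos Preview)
Your proposal is correct and follows exactly the route the paper takes: its entire proof is the single line ``Restricting the covectors to $\ground$ yields the following,'' and you have spelled out the three ingredients that make this work---the identification $\orientedmatroid=\widetilde{\orientedmatroid}\setminus\widetilde{\rows}$ via the chirotope restriction, the lift $\widetilde{F}=F\cup\{(i,\widetilde{i}):i\text{ non-isolated in }F\}\subseteq\widetilde{T}$, and the check that $\psi_{\widetilde{A}}(S,\widetilde{F})|_{\ground}=\psi_A(S,F)$.

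One remark on your caveat about $S$: you are right to impose the support hypothesis from Proposition~\ref{prop:covectors+from+topes}, and this is not merely a technicality of your chosen approach. The corollary's phrase ``for every sign vector $S$'' should be read with that implicit constraint carried over from the setup preceding the proposition; taken literally it would be false. For instance, in the diagonal $(2,4)$-matching field with $A$ the all-$+$ matrix, the tree $T=\{(1,1),(1,2),(2,2),(2,3),(2,4)\}\in\mathcal{T}$ together with $S=(+,0)$ gives $\psi_A(S,T)=(+,+,0,0)$, and a rank-$2$ uniform oriented matroid on four elements has no covector of support size two. So your restriction on $S$ is not a gap in your argument but the correct reading of the statement, and your proof then matches the paper's.
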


We shall see in the follow-up of our paper that the map is actually surjective.
The strategy of understanding the oriented matroid via ``local pieces'' makes it difficult to combinatorially capture all covectors in the oriented matroid.
Instead, our proof uses topological method, particularly the Borsuk--Ulam theorem together with the Topological Representation Theorem.

\begin{remark} \label{rem:connection+tropical+simplex}
  Now that we can transfer the covector pd-graphs to covectors of the oriented matroid $\orientedmatroid$, we give a brief overview on the connection with simplex-like algorithms for tropical linear programming.

  The iteration in the framework of abstract tropical linear programming~\cite{Loho:2016} is described in the language of trees encoding a triangulation.
  The main object in each iteration is a \emph{basic covector}, which represents the analogue of a basic point in the simplex method.
  These are formed by a subclass of those trees from Proposition~\ref{prop:chow+tree} and, from our new perspective, give rise to certain cocircuits of~$\orientedmatroid$. 

  One can go even further and consider the tropicalization of the simplex method~\cite{ABGJ-Simplex:A}.
  Using their genericity assumption, one sees that the covector pd-graph of the basic point defined in~\cite[Proposition-Definition~3.8]{ABGJ-Simplex:A} is also such a tree.
  Their pivoting depends on the signs of the tropical reduced costs, which are deduced from a ``Cramer digraph''~\cite[\S 5]{ABGJ-Simplex:A}.
  The latter can also be considered as covector pd-graphs and hence give rise to covectors for $\rchi$. 
\end{remark}

\section{Realizability}
\label{sec:OM_MF}

\subsection{Sets of Oriented Matroids}

Starting from a polyhedral matching field $(M_\sigma)$,
we can associate an oriented matroid for each sign matrix $A \in \{+,-\}^{d \times n}$.
More generally, we denote the sign map defined in~\eqref{eq:sign+map+matching+field} by $\rchi((M_{\sigma}),A)$.   
This leads to the set of sign maps
\[
\omm\left[(M_{\sigma})\right] \ =\ \{\rchi((M_{\sigma}),A) \colon A \in \{+,-\}^{d \times n}\} \enspace ,
\]
or just $\omm$ if the matching field is clear.
By the \emph{support} $\supp\left[(M_{\sigma})\right]$ of a matching field, we mean the union of all edges occurring in a matching. 
Clearly, only the signs in the entries corresponding to elements in the support of the matching field influence the resulting sign map. 

\begin{example} \label{ex:single+matching}
  A $(d,d)$-matching field consists of a single matching. Hence, among the $2^d$ sign matrices differing on the support of the matching field, we get $+$ for half of the matrices and $-$ for the other half. 
\end{example}

\begin{example} \label{ex:linkage+matching+field+covector}
  A linkage $(d,d+1)$-matching field $(M_{\sigma})$, see Example~\ref{ex:linkage+matching+field} and Section~\ref{sec:signed+co+circuits}, can be identified with its linkage pd-graph $T$.
  Its support has cardinality $d + (d+1) - 1 = 2d$.
  To determine all oriented matroids in $\omm$, it suffices to fix the signs on a matching.
  The remaining degrees of freedom correspond to the $d$ edges of the linkage tree $\widetilde{T}$.

  In particular, all $2^d$ assignments of $+$ or $-$ to the edges of $\widetilde{T}$ yield different sign maps:
  Let $k$ be the node in $\widetilde{T}$ corresponding to the fixed matching and let $A$, $A'$ be different assigments.
  Then there is a path emerging from $k$ on which the signs assigned by $A$ and $A'$ differ.
  Let $(u,v)$ be the edge closest to $k$ on this path, where $u$ denotes the node closer to $k$; observe that $u$ could be equal to $k$. 
  Then the signs $\rchi((M_{\sigma}),A)(\sigma_v)$ and $\rchi((M_{\sigma}),A')(\sigma_v)$ differ, where $\sigma_v$ is the subset of $\ground$ corresponding to the node $v$ in $\widetilde{T}$.
\end{example}

We summarize from Theorem~\ref{thm:main}.

\begin{corollary}
  If $(M_{\sigma})$ is a polyhedral matching field, then all sign maps in $\omm\left[(M_{\sigma})\right]$ are chirotopes. 
\end{corollary}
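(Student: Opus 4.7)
The plan is to observe that this corollary is essentially a direct re-packaging of Theorem~\ref{thm:main}: by the definition of $\omm[(M_\sigma)]$, every element of this set is of the form $\rchi((M_\sigma),A)$ for some $A\in\{+,-\}^{d\times n}$, which is exactly the sign map defined in equation~\eqref{eq:sign+map+matching+field}. So the whole task reduces to verifying that the hypotheses of Theorem~\ref{thm:main} apply verbatim in the current setting.

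The one hypothesis to check is the ``full sign matrix'' requirement of Theorem~\ref{thm:main}, namely that every entry of $A$ corresponding to an edge lying in the support of the matching field is nonzero. First, I would note that since $A\in\{+,-\}^{d\times n}$ by definition of $\omm[(M_\sigma)]$, \emph{every} entry of $A$ is nonzero, so in particular the entries on $\supp[(M_\sigma)]\subseteq\rows\times\ground$ are nonzero; the support hypothesis is therefore automatically satisfied. Next, I would invoke Theorem~\ref{thm:main} to conclude that $\rchi((M_\sigma),A)$ is the chirotope of a uniform oriented matroid. Since $A$ was arbitrary in $\{+,-\}^{d\times n}$, this proves that every sign map in $\omm[(M_\sigma)]$ is a chirotope.

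There is no real obstacle here; the content of the corollary lies entirely in Theorem~\ref{thm:main}, and the statement merely packages the conclusion for the whole family $\{\rchi((M_\sigma),A)\}_{A\in\{+,-\}^{d\times n}}$ at once. The only thing worth mentioning for the reader is that taking $A$ from the full sign matrices (rather than allowing zero entries) trivializes the compatibility-of-supports condition that is otherwise needed to guarantee that $\underline{\rchi}=\binom{\ground}{d}$.
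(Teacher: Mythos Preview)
Your proposal is correct and matches the paper's treatment exactly: the paper simply writes ``We summarize from Theorem~\ref{thm:main}'' before stating the corollary and gives no further argument, since the statement is nothing more than Theorem~\ref{thm:main} applied to each $A\in\{+,-\}^{d\times n}$. Your observation that a full sign matrix automatically satisfies the support hypothesis is the only point worth making, and it is indeed trivial.
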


For the subclass of coherent matching fields, see Example~\ref{ex:coherent+matching+field}, we know that the matchings are given as weight maximal matchings induced by a weight matrix $M = (m_{ij})_{(i,j) \in [d] \times [n]} \in \RR^{d \times n}$.
As a commonly used trick in tropical geometry, we consider the matrix $(t^M) := (t^{m_{ij}})_{(i,j) \in [d] \times [n]} \in \RR^{d \times n}$ for a sufficiently large parameter $t > 0$.
Then the maximal term in the expansion of the determinant corresponds to the weight maximal matching of the coherent matching field.

\begin{corollary}
  If $(M_{\sigma})$ is a coherent matching field, then all sign maps in $\omm\left[(M_{\sigma})\right]$ are chirotopes of oriented matroids realizable over $\RR$. 
\end{corollary}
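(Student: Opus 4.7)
The plan is to realize the oriented matroid explicitly using the weight matrix underlying the coherent matching field, following the tropical perturbation idea already sketched in the paragraph preceding the corollary.

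Fix a generic weight matrix $M = (m_{ij}) \in \RR^{d \times n}$ that induces the coherent matching field $(M_\sigma)$, and fix a sign matrix $A \in \{+,-\}^{d \times n}$. For a real parameter $t > 0$, I would form the matrix
\[
B(t) := \bigl(A_{ij} \cdot t^{m_{ij}}\bigr)_{(i,j) \in [d] \times [n]} \in \RR^{d \times n}.
\]
The heart of the argument is to analyze each maximal minor via the Leibniz expansion. For a fixed $\sigma \in \binom{\ground}{d}$, identifying permutations with perfect matchings $\pi$ between $\rows$ and $\sigma$, we have
\[
\det(B(t)_{|\sigma}) = \sum_{\pi} \sign(\pi)\Bigl(\prod_{e \in \pi} A_e\Bigr)\, t^{w(\pi)},
\qquad w(\pi) := \sum_{(i,j) \in \pi} m_{ij}.
\]

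By the definition of coherence (Example~\ref{ex:coherent+matching+field}) and the genericity of $M$, the matching $M_\sigma$ is the unique maximizer of $w(\pi)$ among matchings between $\rows$ and $\sigma$. Hence for $t$ sufficiently large, the term indexed by $M_\sigma$ strictly dominates in absolute value, and
\[
\sign\det(B(t)_{|\sigma}) \;=\; \sign(M_\sigma)\prod_{e \in M_\sigma} A_e \;=\; \rchi((M_\sigma),A)(\sigma).
\]
Since there are only finitely many subsets $\sigma \in \binom{\ground}{d}$, one can pick a single $t_0 > 0$ for which the dominance holds simultaneously for every $\sigma$. Then by Example~\ref{ex:realizable+oriented+matroid}, the sign map $\sigma \mapsto \sign\det(B(t_0)_{|\sigma})$ is the chirotope of the oriented matroid realized by the real matrix $B(t_0)$, and it coincides with $\rchi((M_\sigma),A)$.

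There is no real obstacle here; the only thing to be careful about is choosing $t_0$ uniformly, which is immediate from finiteness of $\binom{\ground}{d}$ together with the uniqueness of the dominant matching. The corollary then follows from Theorem~\ref{thm:main} (giving that the resulting sign map is a chirotope) combined with the explicit realization by $B(t_0)$.
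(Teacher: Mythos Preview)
Your proof is correct and follows exactly the approach the paper sketches in the paragraph immediately preceding the corollary: form the matrix $\bigl(A_{ij}\,t^{m_{ij}}\bigr)$, expand each maximal minor, and use genericity of $M$ so that for large $t$ the dominant term of $\det(B(t)_{|\sigma})$ is the one indexed by $M_\sigma$. The only (harmless) redundancy is the final appeal to Theorem~\ref{thm:main}; once $B(t_0)$ realizes $\rchi((M_\sigma),A)$ via Example~\ref{ex:realizable+oriented+matroid}, the chirotope axioms are automatic and Theorem~\ref{thm:main} is not needed.
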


\begin{example} \label{ex:alternating+matroid}
  For the diagonal $(d,n)$-matching field, we obtain all reorientations of the unique uniform positroid, the alternating matroid; see also~\cite[\S 8.2]{BLSWZ:1993}. 
\end{example}

The observation of Example~\ref{ex:linkage+matching+field+covector} can be extended to arbitrary linkage matching fields.
Note that the sign map does not have to be a chirotope if the matching field is not polyhedral. 
We fix a linkage matching field $(M_{\sigma})$. 
By~\cite[Thm.~3.2]{SturmfelsZelevinsky:1993}, each node of $\rows$ in the support of a linkage matching field, considered as a subgraph of $K_{d,n}$, has $n-d+1$ neighbours.
In particular, the support has cardinality $d \times (n-d+1)$.
We fix a $d$-subset $\sigma_0$ of $\ground$.
Let $A$ and $A'$ be two different sign matrices supported on the support $\supp\left[(M_{\sigma})\right]$ which agree on $\sigma_0$. 

\begin{proposition} \label{prop:distinct+sign+maps+different+matrices}
  The sign maps $\rchi((M_{\sigma}),A)$ and $\rchi((M_{\sigma}),A')$ are distinct. 
\end{proposition}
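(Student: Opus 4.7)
I pass to $\mathbb{F}_{2}$: set $D := \{e \in \supp[(M_{\sigma})] : A_{e} \neq A'_{e}\}$, so the hypotheses give $D \neq \emptyset$ and $D \cap (\rows \times \sigma_{0}) = \emptyset$. Since $\rchi((M_{\sigma}),A)(\sigma) = \rchi((M_{\sigma}),A')(\sigma)$ precisely when $|M_{\sigma} \cap D|$ is even, it suffices to exhibit some $\sigma \in \binom{\ground}{d}$ with $|M_{\sigma} \cap D|$ odd. I will argue by contradiction, assuming every $|M_{\sigma} \cap D|$ is even.

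The first step is a \emph{Pair-Parity Lemma}: for each $(d+1)$-subset $\tau \subseteq \ground$ and each row $r \in \rows$, if $\{a, b\} \subseteq \tau$ are the two $T_{\tau}$-neighbours of $r$, then $(r, a) \in D$ iff $(r, b) \in D$. The ``exactly two neighbours'' property follows from $T_{\tau}$ being a spanning tree on $2d + 1$ nodes with $2d$ edges (hence $\sum_{r \in \rows} \deg_{T_{\tau}}(r) = 2d$), together with the observation that a single edge $(r, c)$ cannot account for row $r$ in all $d + 1$ matchings $\{M_{\tau \setminus \{j\}}\}_{j \in \tau}$ (since $M_{\tau \setminus \{c\}}$ uses no edge in column $c$): this forces each row's $T_{\tau}$-degree to be at least $2$, and hence exactly $2$. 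Writing row $r$'s edge in the linkage tree $\widetilde{T}_{\tau}$ as $\{a, b\}$, the two matchings $M_{\tau \setminus \{a\}}$ and $M_{\tau \setminus \{b\}}$ correspond to the two root choices for this single edge, differing only in the swap $(r, a) \leftrightarrow (r, b)$. Hence their parities of intersection with $D$ differ by $\bigl|\{(r, a), (r, b)\} \cap D\bigr| \pmod{2}$; both being even forces this quantity to be $0$, giving the desired equivalence.

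Let $f_{r}(\sigma) \in \sigma$ denote the column matched to row $r$ in $M_{\sigma}$, and put $D_{r} := \{c : (r, c) \in D\}$. Pick $(i_{0}, j_{0}) \in D$, so $j_{0} \in D_{i_{0}} \subseteq \ground \setminus \sigma_{0}$, and choose $\pi_{0} \in \binom{\ground}{d}$ with $f_{i_{0}}(\pi_{0}) = j_{0}$ (this exists since $(i_{0}, j_{0}) \in \supp$). Build a chain $\pi_{0}, \pi_{1}, \ldots, \pi_{m} = \sigma_{0}$ in $\binom{\ground}{d}$ where each $\pi_{k+1}$ is obtained from $\pi_{k}$ by swapping some element of $\pi_{k} \setminus \sigma_{0}$ for some element of $\sigma_{0} \setminus \pi_{k}$, terminating when $\pi_{k} = \sigma_{0}$. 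At each step, setting $\tau_{k} := \pi_{k} \cup \pi_{k+1}$ (a $(d+1)$-subset), the inclusions $M_{\pi_{k}}, M_{\pi_{k+1}} \subseteq T_{\tau_{k}}$ imply $f_{i_{0}}(\pi_{k}), f_{i_{0}}(\pi_{k+1}) \in \{a_{k}, b_{k}\}$, the two $T_{\tau_{k}}$-neighbours of $i_{0}$. The Pair-Parity Lemma gives $a_{k} \in D_{i_{0}} \iff b_{k} \in D_{i_{0}}$, so $f_{i_{0}}(\pi_{k}) \in D_{i_{0}}$ implies $f_{i_{0}}(\pi_{k+1}) \in D_{i_{0}}$. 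Iterating from $f_{i_{0}}(\pi_{0}) = j_{0} \in D_{i_{0}}$ yields $f_{i_{0}}(\sigma_{0}) \in D_{i_{0}}$, contradicting $f_{i_{0}}(\sigma_{0}) \in \sigma_{0}$ and $D_{i_{0}} \cap \sigma_{0} = \emptyset$. The main obstacle is the careful setup of the Pair-Parity Lemma---specifically showing that each row has exactly two $T_{\tau}$-neighbours and that $M_{\tau \setminus \{a\}}, M_{\tau \setminus \{b\}}$ agree outside the swap $(r, a) \leftrightarrow (r, b)$; once these are in place, the $D$-propagation along the exchange chain follows by direct induction on $k$.
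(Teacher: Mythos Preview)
Your proof is correct and follows the same underlying idea as the paper's: propagate the information ``$A$ and $A'$ differ on this edge'' from an arbitrary matching back to $M_{\sigma_0}$ via the linkage structure, obtaining a contradiction. The paper's version walks in the \emph{flip graph} on matchings (adjacent matchings differ by a single edge) and cites \cite{LohoSmith:2020} for its connectedness; at each step the swapped edge either preserves or breaks agreement of the two sign maps, and the first break gives the desired $\sigma$. Your version instead walks on $\binom{\ground}{d}$ via single element exchanges $\pi_k \to \pi_{k+1}$, which is trivially connected, and compensates for the fact that $M_{\pi_k}$ and $M_{\pi_{k+1}}$ may differ in many rows by tracking only row $i_0$ through your Pair-Parity Lemma. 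This makes your argument more self-contained (no external connectedness citation) at the cost of isolating and proving the lemma. The one place you are a bit terse is the assertion that $M_{\tau\setminus\{a\}}$ and $M_{\tau\setminus\{b\}}$ differ only by the swap $(r,a)\leftrightarrow(r,b)$; this is exactly the statement that the two descriptions of edges in $\widetilde{T}_\tau$ (``share a row-neighbour in $T_\tau$'' versus ``matchings differ by one edge'') coincide, which the paper also uses without proof in Section~\ref{sec:signed+co+circuits}. Your ``two root choices'' remark points at the right argument: rooting $T_\tau$ at $a$ versus $b$ and matching each row to its unique child shows the two matchings agree outside row $r$, since removing $r$ disconnects $T_\tau$ into an $a$-side and a $b$-side on each of which the parent/child orientation is unaffected by the root change.
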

\begin{proof}
  Consider the flip graph of the linkage matching field as introduced in~\cite[\S3.5]{LohoSmith:2020}; that is the graph which has the matchings as nodes and two nodes are connected if the matchings differ by exactly one edge.
  Let $\mu_1$ and $\mu_2$ be two matchings such that $A$ and $A'$ differ on the matchings. 
  Since the linkage graphs cover the flip graph, there is a path from $\mu_1$ to $\mu_2$.
  As in Example~\ref{ex:linkage+matching+field+covector}, the subset of $\ground$ corresponding to the node in the flip graph, where the signs of $A$ and $A'$ differ for the first time on the path, shows that also the sign vectors differ. 
\end{proof}

Taking the consideration of Example~\ref{ex:single+matching} into account, we obtain the cardinality of $\omm$.

\begin{corollary}
  For a linkage matching field, the set $\omm$ contains $2^{d(n-d)+1}$ different sign maps. 
\end{corollary}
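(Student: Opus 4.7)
The plan is to count $|\omm[(M_\sigma)]|$ by analyzing the fibers of the map $A \mapsto \rchi((M_\sigma), A)$. Since the sign map depends only on entries of $A$ on $\supp[(M_\sigma)]$, which has cardinality $d(n-d+1)$ by the cited Sturmfels--Zelevinsky theorem, the domain effectively consists of $2^{d(n-d+1)}$ matrices. It therefore suffices to show every fiber has size exactly $2^{d-1}$, yielding an image of size $2^{d(n-d+1)}/2^{d-1} = 2^{d(n-d)+1}$. Two matrices $A, A'$ share a fiber iff the entrywise ratio $B = A \odot A' \in \{+,-\}^{\supp[(M_\sigma)]}$ satisfies $\prod_{e \in M_\sigma} B_e = +$ for every $d$-subset $\sigma$; the set $F$ of such $B$ is a subgroup under pointwise multiplication, and the fibers are its cosets.

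For the lower bound $|F| \geq 2^{d-1}$: the ``row-flip'' matrices $B$ defined by $B_e = g_i$ for every $e$ incident to row $i$ (with $g \in \{+,-\}^{\rows}$) satisfy $\prod_{e \in M_\sigma} B_e = g_1 g_2 \cdots g_d$ independently of $\sigma$, since each $M_\sigma$ is $\rows$-saturating. Hence exactly the $2^{d-1}$ even-parity row-flips lie in $F$, and their pairwise distinctness is immediate.

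For the upper bound $|F| \leq 2^{d-1}$: fix any $d$-subset $\sigma_0$ and consider the restriction map $F \to \{+,-\}^{M_{\sigma_0}}$, $B \mapsto B|_{M_{\sigma_0}}$. The constraint at $\sigma_0$ itself forces $\prod_{e \in M_{\sigma_0}} B_e = +$, so $B|_{M_{\sigma_0}}$ has even parity and its image has at most $2^{d-1}$ elements. The restriction is moreover injective: if $B, B' \in F$ restrict to the same pattern on $M_{\sigma_0}$, then $A \odot B$ and $A \odot B'$ agree on $M_{\sigma_0}$ and share the sign map $\rchi(A)$, so Proposition~\ref{prop:distinct+sign+maps+different+matrices} forces $A \odot B = A \odot B'$, whence $B = B'$.

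Combining the bounds yields $|F| = 2^{d-1}$ and the corollary follows. The main delicate step is invoking Proposition~\ref{prop:distinct+sign+maps+different+matrices} to secure the injectivity of the restriction to $M_{\sigma_0}$; once this is in place, the remainder is a routine parity count interwoven with the row-flip symmetry of~$F$, exactly paralleling the explicit analysis carried out for $(d,d)$-matching fields in Example~\ref{ex:single+matching} and for $(d,d+1)$-matching fields in Example~\ref{ex:linkage+matching+field+covector}.
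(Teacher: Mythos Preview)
Your proof is correct and follows essentially the same route as the paper: the upper bound on the fiber size is exactly Proposition~\ref{prop:distinct+sign+maps+different+matrices} (interpreting ``agree on $\sigma_0$'' as agreement on the matching $M_{\sigma_0}$, which is the reading the paper uses, cf.\ Example~\ref{ex:linkage+matching+field+covector}), while the lower bound via even-parity row flips makes explicit the parity count that the paper invokes only by reference to Example~\ref{ex:single+matching}. The subgroup/coset framing is a helpful elaboration of what the paper compresses into a single sentence.
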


  Two chirotopes for uniform matroids are isomorphic, if they differ by a reorientation or a relabeling; see~\cite[\S 3]{BLSWZ:1993} for more details on these notions. 
  In particular, for a fixed chirotope, there are at most $2^n \cdot n!$ isomorphic chirotopes.

\begin{corollary}
  If $(M_{\sigma})$ is a polyhedral matching field, then $\omm\left[(M_{\sigma})\right]$ contains at least $2^{d(n-d)-n+1} / n!$ non-isomorphic chirotopes.
  For $2\log_2(n) < d < n / \log_2(n)$ with $n > 8$, this shows that there are non-isomorphic chirotopes. 
\end{corollary}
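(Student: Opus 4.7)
The plan is to combine the previous two corollaries: a polyhedral matching field is in particular a linkage matching field, so $|\omm[(M_{\sigma})]| = 2^{d(n-d)+1}$, and by Theorem~\ref{thm:main} every element of $\omm$ is a chirotope. Dividing by the maximum size $2^n \cdot n!$ of an isomorphism class (recorded in the paragraph preceding the corollary) immediately yields the lower bound $2^{d(n-d)-n+1}/n!$ on the number of non-isomorphic chirotopes.

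For the second assertion, I would show that the hypotheses $2\log_2(n) < d < n/\log_2(n)$ and $n > 8$ force this bound to strictly exceed $1$. Using the elementary estimate $n! \le n^n$, so that $\log_2(n!) \le n\log_2(n)$, it suffices to verify
\[
d(n-d) - n + 1 \;>\; n\log_2(n) .
\]
Since $d \mapsto d(n-d)$ is concave in $d$, on the interval in question its minimum is attained at one of the endpoints, so I only need to check the two extremes. At $d = 2\log_2(n)$ one has $n - d \ge n(1 - 2/\sqrt{n})$ (in particular $n - d \ge n/2$ for $n > 8$), giving $d(n-d) \ge n\log_2(n)$; for the sharper inequality I would plug in $n > 8$ to absorb the lower order $-n+1$ term. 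At the other endpoint $d = n/\log_2(n)$, $d(n-d) \approx n^2/\log_2(n)$, which dominates $n\log_2(n)$ as soon as $n > (\log_2 n)^2$, a condition implied by $n > 8$.

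The main obstacle is purely numerical: one must check that the slack in the estimate $n! \le n^n$ (or alternatively Stirling's formula) together with the concavity argument still leaves enough room at both endpoints of the allowed range of $d$. I would do this by splitting into the two cases above and in each case verifying that $d(n-d) - n + 1 - n\log_2(n) > 0$ via a short explicit computation, using $n > 8$ to handle the lower order terms uniformly. Once that inequality holds, $2^{d(n-d)-n+1}/n! > 1$ and hence $\omm[(M_{\sigma})]$ must contain at least two non-isomorphic chirotopes, completing the proof.
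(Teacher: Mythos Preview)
Your argument for the first assertion is exactly the paper's: divide the count $2^{d(n-d)+1}$ of distinct chirotopes by the bound $2^n\cdot n!$ on the size of an isomorphism class.

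For the numerical second part you take a different route. The paper does \emph{not} split into endpoint cases via concavity; instead it uses the two hypotheses on $d$ simultaneously, bounding
\[
d(n-d) \;>\; 2\log_2(n)\cdot\Bigl(n-\tfrac{n}{\log_2(n)}\Bigr) \;=\; 2n\log_2(n)-2n,
\]
so that the exponent satisfies
\[
d(n-d)-n+1-n\log_2(n) \;>\; n\log_2(n)-3n+1,
\]
which is at least $1$ precisely when $\log_2(n)>3$, i.e.\ $n>8$. This single product bound replaces your two-case analysis and explains cleanly where the threshold $n>8$ comes from.

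Your concavity approach is valid in principle, but the sketch has genuine numerical slips. The claim ``$n-d\ge n(1-2/\sqrt{n})$'' at $d=2\log_2(n)$ amounts to $\log_2(n)\le\sqrt{n}$, which \emph{fails} for $4<n<16$; and ``$n-d\ge n/2$ for $n>8$'' needs $n\ge 16$, not $n>8$. More seriously, your bound $d(n-d)\ge n\log_2(n)$ at that endpoint is not enough: you need strictly more than $n\log_2(n)+n-1$, and ``plugging in $n>8$'' alone will not produce the missing $n-1$ slack. To rescue the argument you must also invoke the nonemptiness of the interval, i.e.\ $2(\log_2 n)^2<n$, to control the $(\log_2 n)^2$ term---at which point you essentially recover the paper's inequality $n\log_2(n)-3n+1$ anyway. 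So your strategy can be made to work, but the paper's direct product of the two bounds is both shorter and avoids these pitfalls.
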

\begin{proof}
  The first part of the claim just follows from the estimate on the number of isomorphic chirotopes. 
  
  For the second part, we use the estimate $n! < 2^{n\log_2 n}$ and get
  \[
  2^{d(n-d)-n+1} / n! > 2^{d(n-d)-n+1-n\log_2(n)} \enspace .
  \]
  Thus, we derive a bound for the exponent, using the specified range of $d$: 
  \begin{align*}
    d(n-d)-n+1-n\log_2(n) &>& 2 \log_2(n) (n - \frac{n}{\log_2(n)}) - n + 1 - n\log_2(n) \\
    &=& 2 n\log_2(n) - 2 n - n + 1 - n\log_2(n) \\
    &=& n \log_2(n) - 3n + 1 \geq 1 .
  \end{align*}
\end{proof}

\begin{question}
  For a polyhedral matching field, how many non-isomorphic chirotopes does $\omm$ contain?
\end{question}

We can also vary the matching field and observe how the set of sign maps differs.
If two matching fields do not have the same support, one can choose a sign matrix $A \in \{+,-\}^{d \times n}$ such that $\rchi((M_{\sigma}),A)$ and $\rchi((M'_{\sigma}),A)$ are distinct; that is simply by modifying the entries which are in the difference of the supports.
Though, for two (even coherent!) matching fields with different support still the set of chirotopes $\omm$ can agree as the next example shows.

\begin{example}
  The two coherent $(3,4)$-matching fields depicted in Figure~\ref{fig:two+linkage+covectors} by their linkage pd-graphs give rise to the same set $\omm$.
  They agree on all matchings except for the one on $\{2,3,4\}$; however, they differ there by a permutation with positive sign. 
\end{example}


\begin{figure}
  \centering
  \begin{tikzpicture}[scale=0.6]

\bigraphthreefourcoord{0}{0}{1.2}{1.3}{2.5};
\draw[EdgeStyle] (v1) to (w1);
\draw[EdgeStyle] (w1) to (v2);
\draw[EdgeStyle] (v2) to (w2);
\draw[EdgeStyle] (w2) to (v3);
\draw[EdgeStyle] (v3) to (w3);
\draw[EdgeStyle] (w3) to (v4);
\bigraphthreefournodes

\bigraphthreefourcoord{6}{0}{1.2}{1.3}{2.5};
\draw[EdgeStyle] (v1) to (w1);
\draw[EdgeStyle] (w1) to (v4);
\draw[EdgeStyle] (v2) to (w2);
\draw[EdgeStyle] (w2) to (v3);
\draw[EdgeStyle] (v3) to (w3);
\draw[EdgeStyle] (w3) to (v4);
\bigraphthreefournodes

\end{tikzpicture}
    \caption{Two coherent matching fields giving rise to the same set $\omm$. }
  \label{fig:two+linkage+covectors}
\end{figure}

\begin{conjecture}
  A matching field is polyhedral if and only if the sign map induced by each sign matrix is a chirotope. 
\end{conjecture}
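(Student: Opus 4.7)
The forward direction of the conjecture is the content of Theorem~\ref{thm:main}, so it remains to establish the converse. Arguing the contrapositive, suppose $(M_{\sigma})$ is not polyhedral; I aim to construct a full sign matrix $A$ such that $\rchi((M_{\sigma}), A)$ violates a $3$-term GP relation. By Proposition~\ref{prop:3GP} any such relation only involves the six matchings $M_{X \cup \{x_i, y_j\}}$ for some $(d-2)$-subset $X$ and some 4-subset $\{x_1, x_2, y_1, y_2\}$ of $\ground \setminus X$, so the plan is to localize the non-polyhedrality to one such ``octahedral'' slice and then exhibit $A$ directly on it.

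The first step is to show that non-polyhedrality descends to an octahedron: there exist $X$ and $\{x_1, x_2, y_1, y_2\}$ such that the six associated matchings cannot be realized as coming from any triangulation of $\ssimplex_1 \times \ssimplex_3$ (after quotienting by the fixed part $X$). For a matching field which is linkage but not polyhedral, this should follow by analyzing the compatibility condition of Proposition~\ref{prop:char-triang}(\ref{item:compatibility}) on $(d+2)$-subsets, in the spirit of the local-to-global principle of Theorem~\ref{thm:local_global} and Example~\ref{ex:matroid+subdivision+octahedron}. For a matching field which is not even linkage, the union of matchings on some $(d+1)$-subset $\tau$ fails to be a spanning tree, and a further descent from $\tau$ to a $(d+2)$-subset should again expose an octahedral obstruction. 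The second step is a direct verification on the octahedron: one computes the three products appearing in~\eqref{eq:3GP} as products of permutation signs times monomials in the entries of $A$ over the symmetric differences of the relevant matching pairs, and argues that when the six matchings fail to fit together into a local triangulation these monomials can be simultaneously tuned so that all three products agree in sign. The template is Example~\ref{ex:SZ_example}, where for the smallest non-polyhedral linkage matching field the all-positive sign matrix already witnesses the failure.

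The main obstacle I anticipate is the first step for non-linkage matching fields. While the linkage case is constrained enough to be handled by an octahedral analysis mirroring Proposition~\ref{prop:local_global_converse}, a non-spanning union on some $\tau$ is a more global failure, and it is not immediately clear which octahedron inside $\tau$ actually carries the obstruction; one would likely need a pigeonhole argument or a careful case analysis on how the matching union can fail to be a tree. A secondary concern is that not every non-polyhedral octahedron need fail under the all-positive sign matrix, so Step~2 may itself require a short case distinction to choose a suitable reorientation of $A$ tailored to the combinatorics of the failing octahedron.
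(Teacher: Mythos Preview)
This statement is presented in the paper as a \emph{conjecture}, not a theorem; the paper offers no proof of the converse direction, only the forward implication via Theorem~\ref{thm:main}. There is therefore no paper proof to compare your attempt against, and what you have written is an attempt at an open problem.

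Your plan has the right overall shape, but the gaps you yourself flag are not minor technicalities; they are the heart of the difficulty. In Step~1, the assertion that non-polyhedrality can always be witnessed on a single octahedral slice is unproven and quite possibly false in this form: polyhedrality is a \emph{global} condition on how all $\binom{n}{d}$ matchings assemble into one triangulation of $\ssimplex_{d-1}\times\ssimplex_{n-1}$, and nothing in the paper (nor in Proposition~\ref{prop:char-triang}) guarantees that a failure to glue must already be visible among six matchings sharing a common $(d-2)$-subset. Your phrase ``quotienting by the fixed part $X$'' is also ill-posed, since the six matchings $M_{X\cup\{a,b\}}$ need not restrict to the same matching on $\rows\sqcup X$, so there is no canonical $(2,4)$-matching field to pass to. In Step~2, Example~\ref{ex:SZ_example} handles one particular non-polyhedral linkage matching field with the all-positive matrix, but you give no general recipe for choosing $A$, and the promised case analysis is absent. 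As it stands, the proposal is a reasonable research outline rather than a proof; settling Step~1 alone would already constitute substantial progress on the conjecture.
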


\begin{remark}
  The study of coherent matching fields is closely related to the structure of the tropical maximal minors of rectangular matrices; see~\cite{FinkRincon:2015}.  
  In this case, considering subpolytopes of $\ssimplex_{d-1} \times \ssimplex_{n-1}$ can be thought as setting some entries of the height matrix to tropical zero. 
  Over the $\min$-plus semiring, this means that non-bases of the transversal matroid are those submatrices whose tropical determinant is $\infty$.
  It is natural to consider tropical singularity,  another tropical analogue of zero determinant.
  A square matrix is {\em tropically singular} if the minimum in its tropical determinant is achieved at least twice.
  The polyhedral interpretation is that we work with general subdivisions instead of triangulations, and only consider simplices in the subdivision as bases.
  However, the following example shows that this does not always produce a matroid.
  A conceptual explanation is that a system of tropical non-singularity conditions can not always be lifted back to non-singularity over a (valued) field.
\end{remark}

\begin{example}
Consider the matrix

\[
\begin{pmatrix}
0 & 0 & 0 & 0 & 0\\
0 & 1 & 1 & 2 & 2\\
0 & 0 & 1 & 1 & 2
\end{pmatrix} \;.
\]

The (column sets of) $\min$-tropically non-singular submatrices are
$$123, 124, 125, 145, 234, 235, 345,$$
which do not form a collection of bases of any matroid.
This can be seen from the pair of bases $123$ and $145$ by removing $2$ from the first set. 
\end{example}

\subsection{Ringel's non-realizable $(3,9)$ uniform oriented matroid} \label{sec:ringel}

There is a unique (up to isomorphism) non-realizable uniform oriented
matroid $\Rin$ of rank $3$ on $9$ elements, which goes back to
Ringel~\cite[\S 8.3]{BLSWZ:1993}. We show that $\Rin$ can be realized
as the oriented matroid $\widetilde{\orientedmatroid}$ from the pointed
polyhedral matching field $(\widetilde{M}_{\sigma})$ associated to
a triangulation of $\ssimplex_{5}\times\ssimplex_{2}$ ({\em a posteriori}
non-regular) together with an appropriate sign matrix. Any rank 3
oriented matroid can be represented geometrically as a \emph{pseudoline
arrangement}; that is, a collection of curves in the plane which pairwise
intersect exactly once. A pseudoline arrangement for $\Rin$ is depicted
in Figure~\ref{fig:Ringel_Gruenbaum} below; see also~\cite[p.42]{Gruenbaum:1972}.

\begin{figure}[H]
\centering{}\includegraphics[scale=0.3]{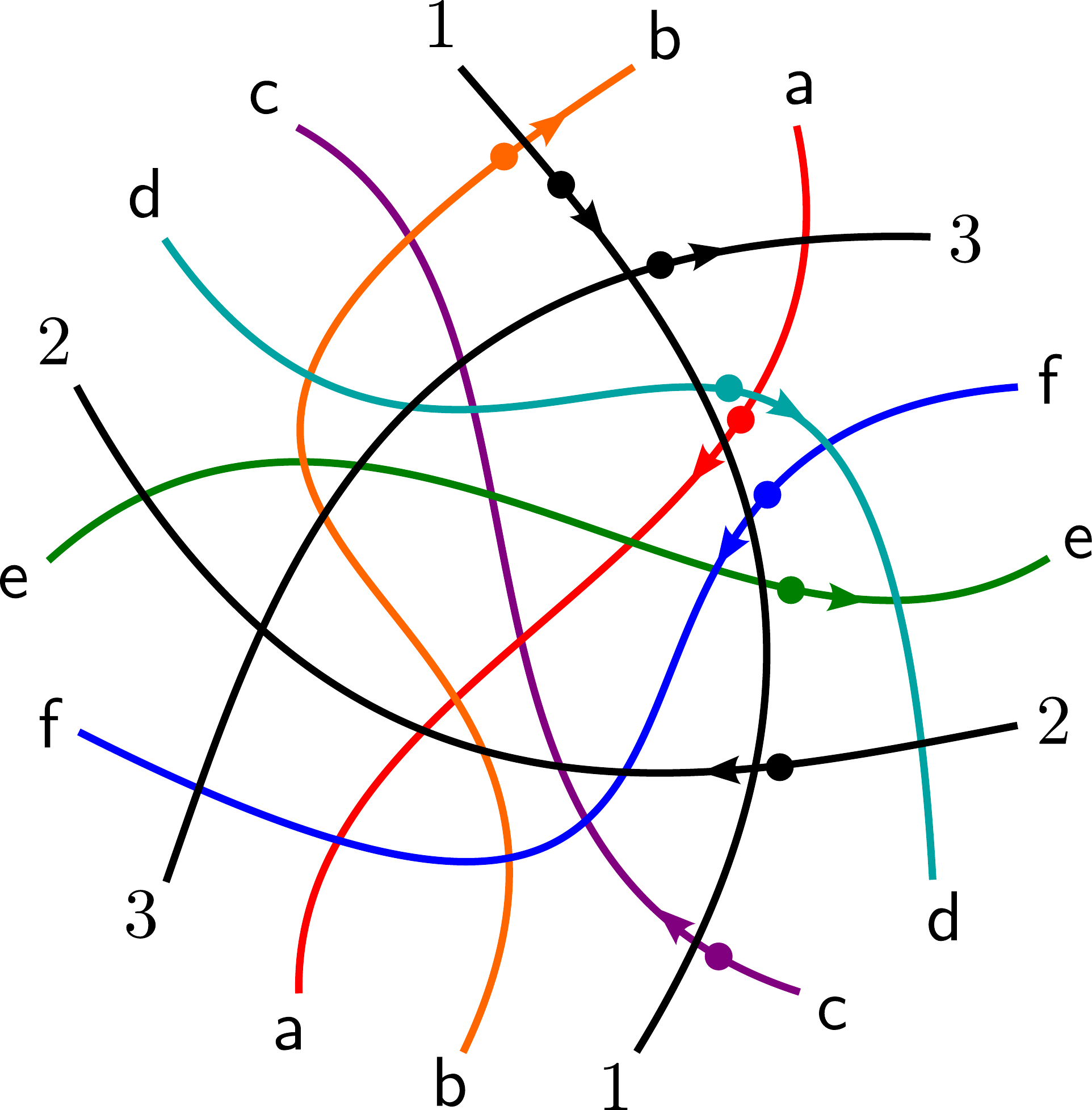}\caption{\label{fig:Ringel_Gruenbaum}A pseudoline arrangement representing
$\protect\Rin$.}
\end{figure}

The triangulation giving rise to $\Rin$ is shown below in Figure~\ref{fig:Ringel_Cayley_trick}
as a fine mixed subdivision of $6\ssimplex_{2}$ (cf. the Cayley trick
in Section~\ref{sec:triangulation}). It has appeared already in
the work of Ardila and Ceballos, where it was provided by Hwanchul
Yoo as a counterexample to a conjecture of theirs about acyclic systems
of permutations; see~\cite[Conjecture 5.7]{ArdilaCeballos:2013}
for further details.

\begin{figure}[H]
\centering{}\includegraphics[scale=0.7]{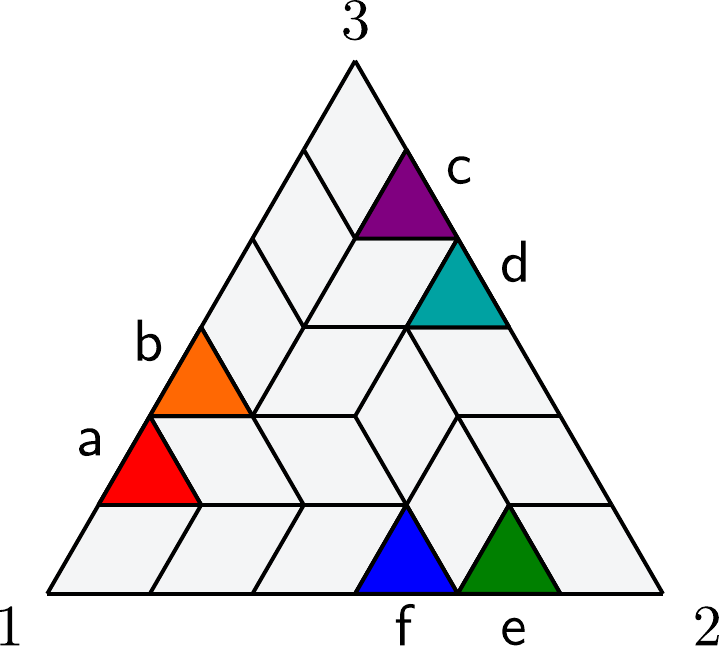}\caption{\label{fig:Ringel_Cayley_trick}The Cayley trick representation of
the triangulation.}
\end{figure}
We also fix the sign matrix

\[
A=\begin{pmatrix}\phantom{-}1 & -1 & -1 & \phantom{-}1 & \phantom{-}1 & \phantom{-}1\\
-1 & \phantom{-}1 & \phantom{-}1 & \phantom{-}1 & \phantom{-}1 & -1\\
\phantom{-}1 & \phantom{-}1 & \phantom{-}1 & -1 & -1 & \phantom{-}1
\end{pmatrix}
\]
and we set $\widetilde{A}=\left(I_{3\times3}\mid A\right)$. Identifying
the ground set $\ground=\left\{ \mathsf{a},\mathsf{b},\mathsf{c},\mathsf{d},\mathsf{e},\mathsf{f}\right\} $
with the six triangles constituting the Minkowski summands of $6\ssimplex_{2}$,
we may alternatively represent this data as shown in Figure~\ref{fig:Ringel_sign_matrix}, where the
solid vertices denote $+$ and the hollow vertices denote $-$:

\begin{figure}[H] 
\centering{}\includegraphics[scale=0.6]{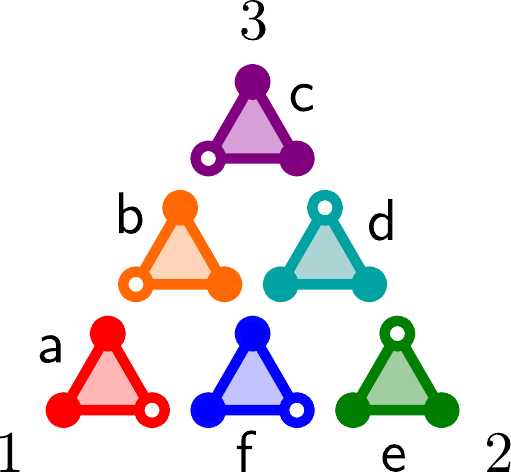}\caption{\label{fig:Ringel_sign_matrix}Another representation of the sign
matrix $A$}
\end{figure}
We depict in Figure~\ref{fig:Depicting_(S,F)} all pairs $(S,F)$
as in Proposition~\ref{prop:covectors+from+topes} in a way that
reflects the geometry of the given fine mixed subdivision of $6\ssimplex_{2}$.
How we do this is illustrated by example in Figure~\ref{fig:Depicting_(S,F)}(b)
below. Each forest $F$ corresponds to a cell in the fine mixed subdivision,
and we depict this cell by its Minkowski sum decomposition into faces
of $\ssimplex_{2}$. The sign vector $S$ then dictates the signs
on each vertex relative to the signs of Figure~\ref{fig:Ringel_sign_matrix}.
The support of $S$ is also shown in gray.


\begin{figure}[H]
  \centering{}
  \begin{subfigure}[h]{0.28\textwidth}
      \centering{}
    \includegraphics[scale=0.6]{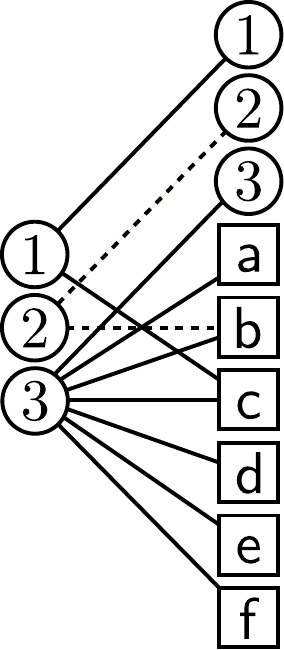}
      \caption{$F$ is defined to be the tree shown above including the dotted edges, and $F'$ is obtained from $F$ by deleting the dotted edges.}
  \end{subfigure}
  \quad
  \begin{subfigure}[h]{0.28\textwidth}
      \centering{}
  \includegraphics[scale=0.6]{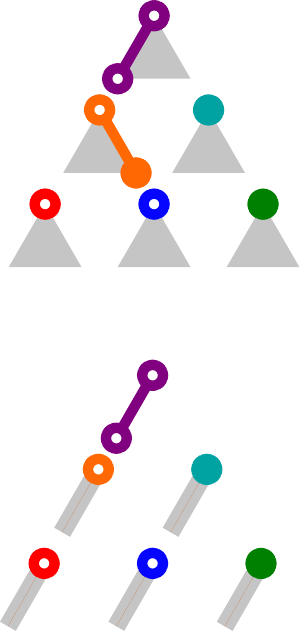}
  \caption{The pairs $(S,F)$ (top), and $(S',F')$ (bottom), where $S=\left(+,+,-\right)$ and $S'=\left(+,0,-\right)$.}
  \end{subfigure}
  \quad
  \begin{subfigure}[h]{0.28\textwidth}
      \centering{}
    \includegraphics[scale=0.6]{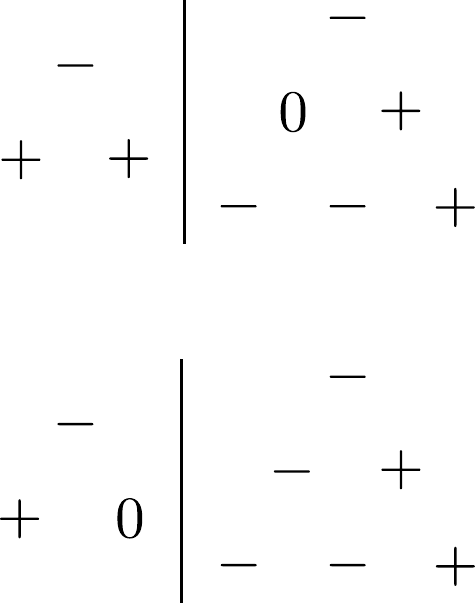}
    \caption{The two covectors $\varphi_{\widetilde{A}}(S,F)$ and $\varphi_{\widetilde{A}}(S',F')$. The sign vectors $S$ and $S'$ are shown on the left.}
  \end{subfigure}
  \caption{\label{fig:Depicting_(S,F)}Depicting the pairs $(S,F)$ as in Proposition~\ref{prop:covectors+from+topes}.}
\end{figure}

\begin{figure}[H]
  \begin{subfigure}[h]{\textwidth}
\centering
\includegraphics[height=0.9\textheight]{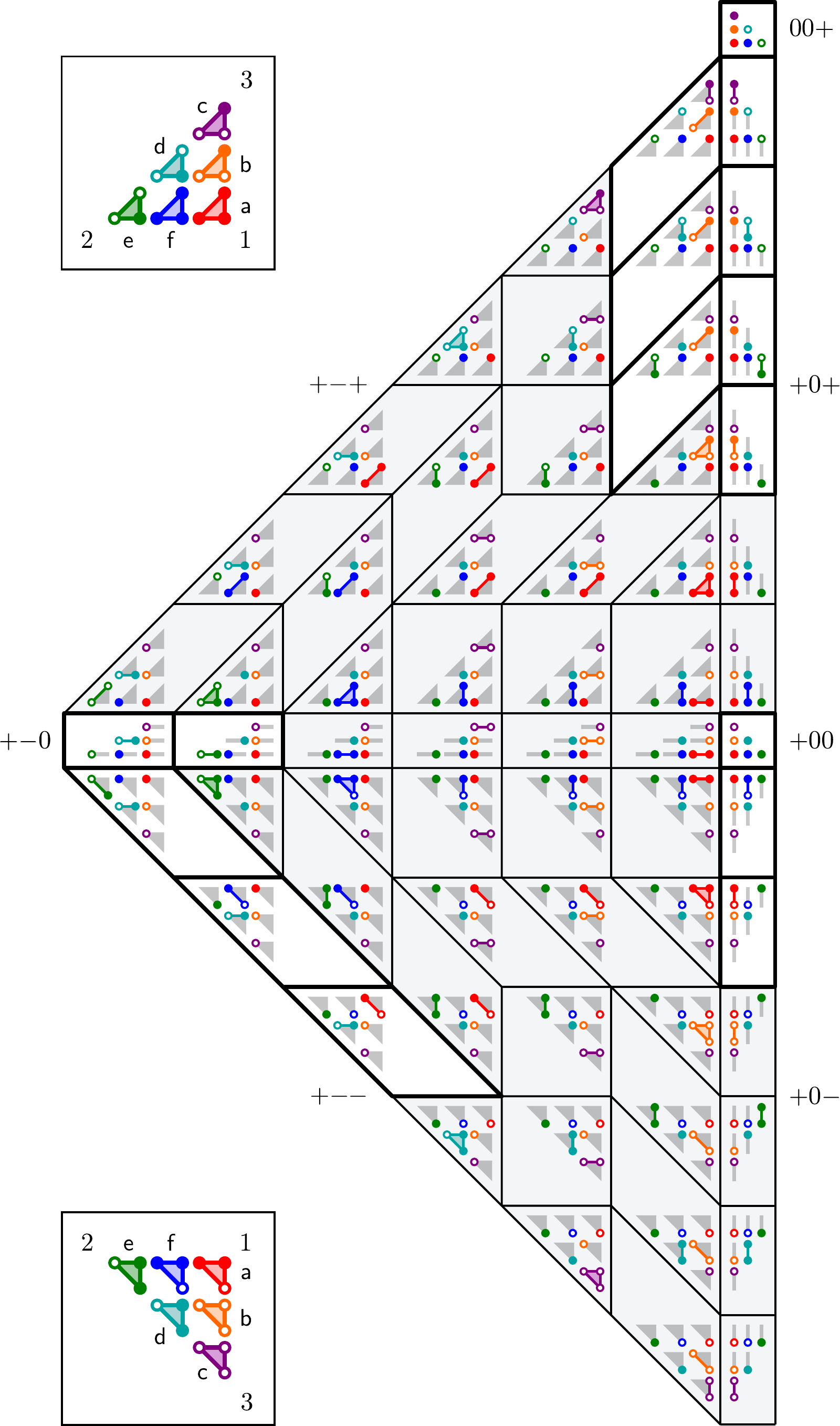}
\caption{Left part}
  \end{subfigure}
\end{figure}

\begin{figure}[H] \ContinuedFloat
  \begin{subfigure}[h]{\textwidth}
    \includegraphics[height=0.9\textheight]{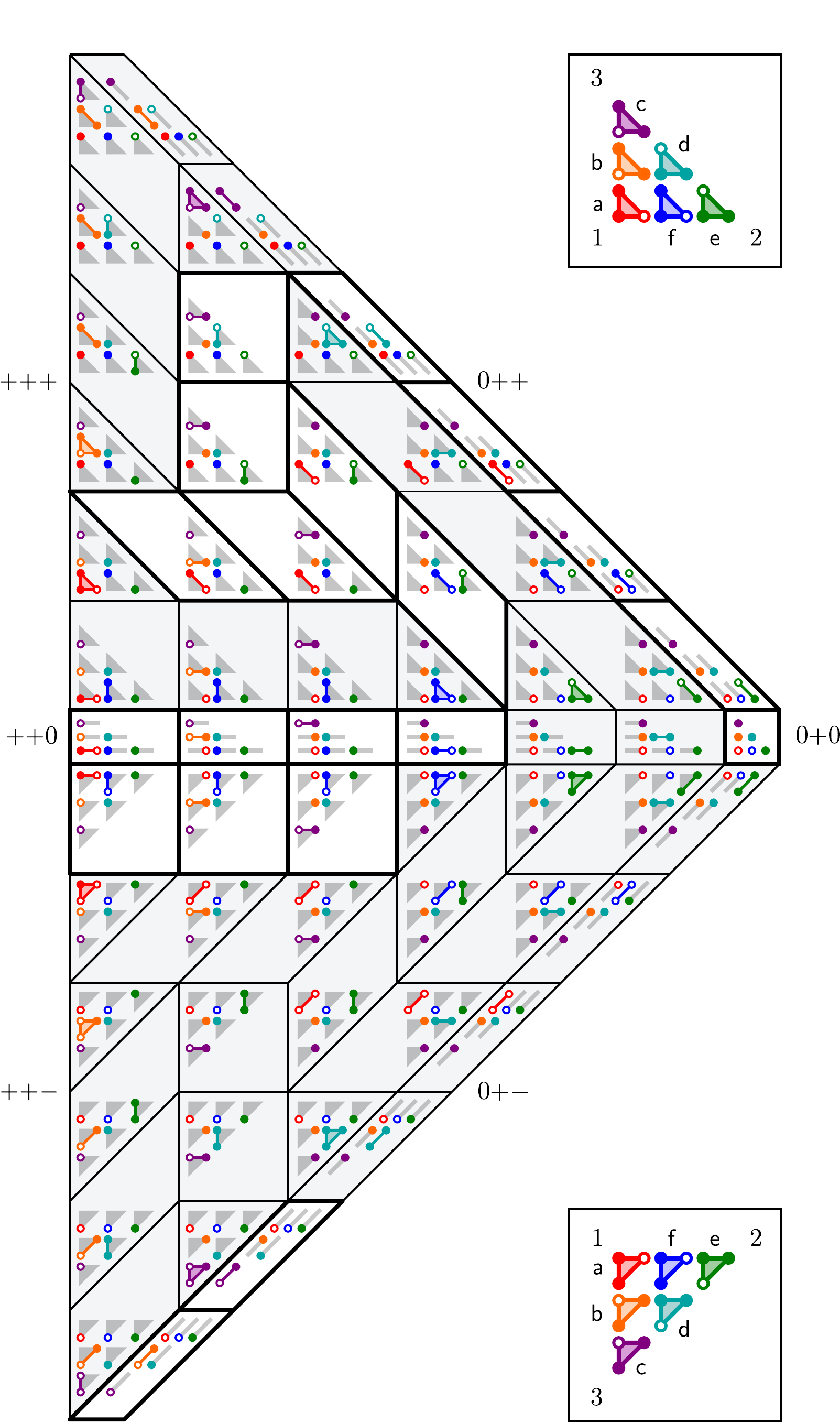}
    \caption{Right part}
  \end{subfigure}
  \caption{\label{fig:Ringel_singed_trees}Pairs $(S,F)$ grouped according to the sign vector $S$.}
\end{figure}


It follows from Proposition~\ref{prop:cocircuit} that a pair $(S,F)$
is mapped to a cocircuit of $\widetilde{\orientedmatroid}$ under
$\psi_{\widetilde{A}}$ if and only if there are exactly $\left|\supp(S)\right|-1$
Minkowski summands of $F$ that are segments with oppositely signed
vertices. In Figure~\ref{fig:Ringel_singed_trees}, such pairs are
shown in bold. Now consider the following pseudoline arrangement with
$9$ pseudolines, oriented so that the region containing the large
black dot is contained in the all $+$ tope:

\begin{figure}[H]
\centering{}\includegraphics[scale=0.35]{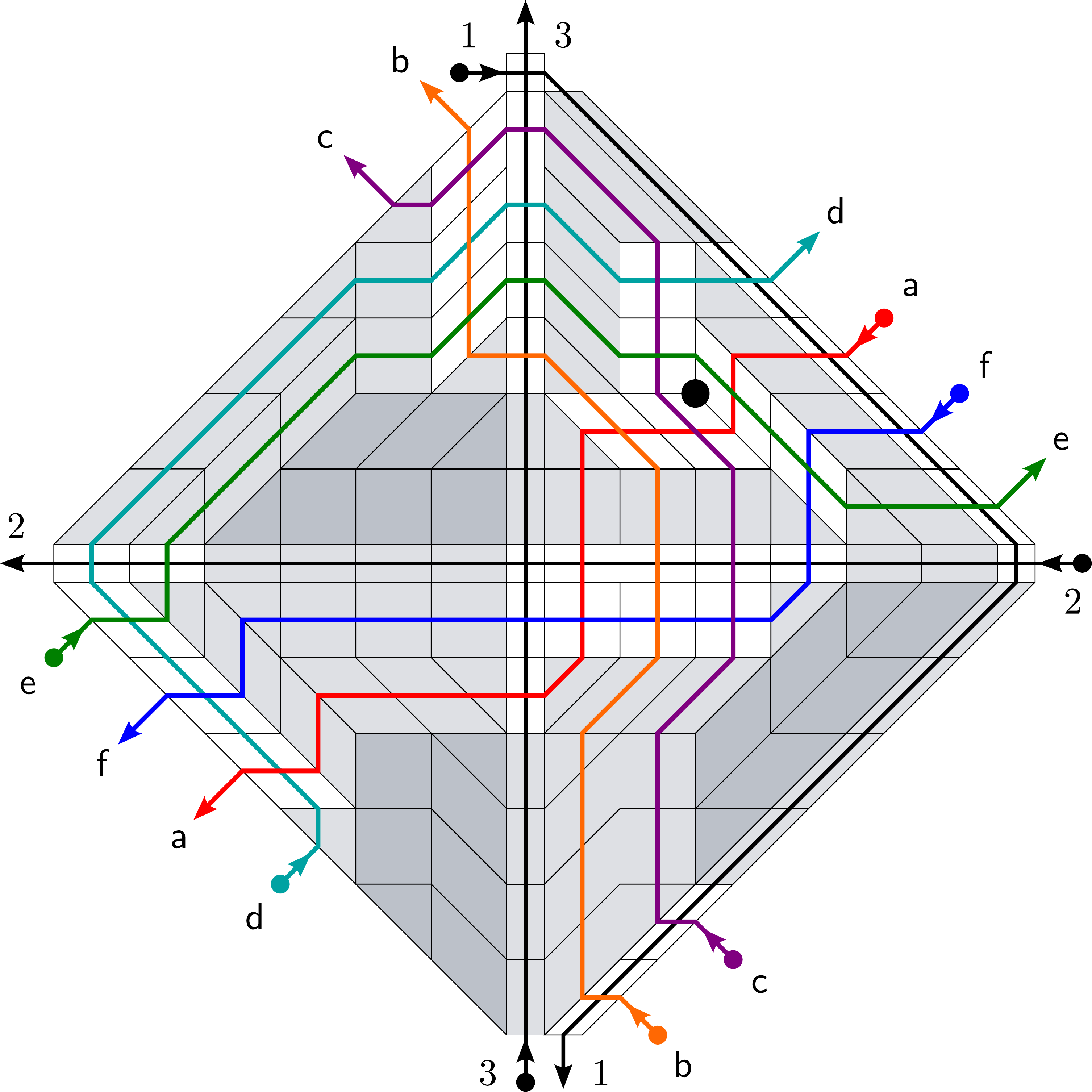}\caption{\label{fig:Ringel_pseudolines_patchworking}The pseudoline arrangement
derived from Figure~\ref{fig:Ringel_singed_trees}.}
\end{figure}
It can be verified that the $\tbinom{9}{2}$ cocircuits shown in Figure~\ref{fig:Ringel_pseudolines_patchworking}
above are precisely the same as those arising from Figure~\ref{fig:Ringel_singed_trees}.
Moreover, in both figures, the remaining $\tbinom{9}{2}$ cocircuits
``on the other side'' are obtained by negating the ones shown. As
oriented matroids are determined by their cocircuits, we see that
this pseudoline arrangement must therefore be a representation of
$\widetilde{\orientedmatroid}$.

The last step now is to verify that this pseudoline arrangement also
represents Ringel's oriented matroid $\Rin$. One may check these
pseudoline arrangements are homeomorphic by considering their \emph{pseudoline
sequences}. Each pseudoline is intersected by all other pseudolines,
giving rise to a (circular) sequence of elements of the oriented matroid
for each pseudoline. In~\cite[Thm.~5.2]{BokowskiMockStreinu:2001},
it was shown that the set of these sequences determines the oriented
matroid.

Using the starting dots and directions indicated, it can be verified
that the sequences for Figures~\ref{fig:Ringel_Gruenbaum}~and~\ref{fig:Ringel_pseudolines_patchworking}
agree. For example, the pseudoline sequence associated to $\mathsf{a}$
in both figures is $1\mathsf{ecb}2\mathsf{f}3\mathsf{d}$.

\section{Extension to Matroids over Hyperfields} \label{sec:hyperfield}

\subsection{Introduction to Matroids over Hyperfields}

\begin{definition}
A {\em hyperfield} $(\mathbb{H},\boxplus,\otimes,0,1)$ consists of a set $\mathbb{H}$ with distinguished elements $0\neq 1$, together with a possibly multi-valued hyperoperation $\boxplus:\mathbb{H}\times\mathbb{H}\rightarrow 2^{\mathbb H}\setminus\{\emptyset\}$ and an operation $\otimes:\mathbb{H}\times\mathbb{H}\rightarrow \mathbb{H}$, such that:
\begin{itemize}
\item $x\boxplus y = y\boxplus x,\forall x,y\in\mathbb{H}$.
\item $\bigcup_{a\in x\boxplus y} a\boxplus z=\bigcup_{b\in y\boxplus z} x\boxplus b=: x\boxplus y\boxplus z, \forall x,y,z\in\mathbb{H}$.
\item $0\boxplus x=\{x\},\forall x\in\mathbb{H}$.
\item For any $x\in\mathbb{H}$, there exists a unique $-x\in\mathbb{H}$ such that $0\in x\boxplus(-x)$.
\item $x\in y\boxplus z$ if and only if $z\in x\boxplus (-y)$.
\item $(\mathbb{H}\setminus\{0\},\otimes, 1)$ is an abelian group, and $0\otimes x=x\otimes 0=0, \forall x\in\mathbb{H}$.
\item $a\otimes(x\boxplus y)=(a\otimes x)\boxplus (a\otimes y), \forall a,x,y\in\mathbb{H}$.
\end{itemize}

Given two hyperfields $(\mathbb{H},\boxplus,\otimes,0,1), (\mathbb{H}',\boxplus',\otimes',0',1')$, a map $\varphi:\mathbb{H}\rightarrow\mathbb{H}'$ is a {\em hyperfield morphism} if $\varphi(0)=0',\varphi(1)=1', \varphi(x\otimes y)=\varphi(x)\otimes'\varphi(y), \forall x,y\in\mathbb{H}$, and $\varphi(x\boxplus y)\subset \varphi(x)\boxplus' \varphi(y),\forall x,y\in\mathbb{H}$.
\end{definition}

\begin{definition}
Let $\mathbb{H}=(\mathbb{H},\boxplus,\otimes,0,1)$ be a hyperfield. 
A {\em strong matroid over $\mathbb{H}$} (on $\ground$ and of rank $d$) is given by a non-zero, alternating function $\rchi:\ground^d\rightarrow\mathbb{H}$ such that for any $x_1,\ldots,x_{d-1},y_1,\ldots, y_{d+1}\in \ground$,
$$
0\in\boxplus_{k=1}^{d+1} (-1)^k \rchi(x_1,\ldots,x_{d-1},y_k)\rchi(y_1,\ldots,\widehat{y_k},\ldots,y_{d+1}).
$$

A non-zero alternating function $\rchi:\ground^d\rightarrow\mathbb{H}$ specifies a {\em weak matroid over $\mathbb{H}$} if $\underline{\rchi}$ is a matroid and that for any $x_1,x_2,y_1,y_2 \in \ground,X:=\{x_3,\ldots,x_d\}\subseteq \ground$,
$$
0\in \rchi(x_1,x_2,X)\rchi(y_1,y_2,X)\boxplus\rchi(x_1,y_1,X)\rchi(y_2,x_2,X)\boxplus\rchi(x_1,y_2,X)\rchi(x_2,y_1,X).
$$

In both cases, two functions differing by a non-zero scalar multiple represent the same matroid.
\end{definition}

Note the similarity between the definitions of a strong and a weak matroid and the two \emph{equivalent} characterizations of an oriented matroid given in Section~\ref{sec:OM}.

We state a proposition that the theory of matroids over hyperfields is functorial.

\begin{proposition}[{\cite[Lemma~3.40]{BakerBowler:2019}}] \label{prop:functorial}
Let $\varphi:\mathbb{H}\rightarrow\mathbb{H}'$ be a hyperfield morphism, and $\rchi$ be a (strong, respectively weak) matroid over $\mathbb{H}$.
Then $\varphi_*\rchi:=\varphi\circ\rchi$ is a (strong, respectively weak) matroid over $\mathbb{H}'$.
\end{proposition}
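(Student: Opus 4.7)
The plan is to verify that $\varphi_*\rchi$ inherits each defining property from $\rchi$ by pushing it through the morphism $\varphi$. There are essentially three things to check: (i) $\varphi_*\rchi$ is non-zero and alternating, (ii) it satisfies the (strong or 3-term) hyperfield Grassmann--Pl\"{u}cker relation, and (iii) in the weak case, $\underline{\varphi_*\rchi}$ is a matroid.

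First I would establish a few standard features of any hyperfield morphism. Because $\varphi$ is multiplicative with $\varphi(0)=0'$ and $\varphi(1)=1'$, its restriction is a group homomorphism $\mathbb{H}^{\times}\to(\mathbb{H}')^{\times}$, which in particular sends non-zero elements to non-zero elements. Applying $\varphi$ to the relation $0\in 1\boxplus(-1)$ and using $\varphi(x\boxplus y)\subseteq\varphi(x)\boxplus'\varphi(y)$ yields $0'\in 1'\boxplus'\varphi(-1)$, so by uniqueness of hyperinverses $\varphi(-1)=-1'$, and hence $\varphi(-x)=-\varphi(x)$ for all $x$. Together, these imply that $\varphi_*\rchi$ is non-zero whenever $\rchi$ is, and that $\varphi_*\rchi$ is alternating because $\rchi$ is.

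Next I would prove the key inclusion
\[
\varphi(a_1\boxplus a_2\boxplus\cdots\boxplus a_n)\ \subseteq\ \varphi(a_1)\boxplus'\varphi(a_2)\boxplus'\cdots\boxplus'\varphi(a_n)
\]
by induction on $n$. The base case $n=2$ is the morphism axiom. For the inductive step, unfold $a_1\boxplus\cdots\boxplus a_n$ as $\bigcup_{b\in a_1\boxplus\cdots\boxplus a_{n-1}}(b\boxplus a_n)$, apply $\varphi$ termwise, and use both the $n=2$ case and the inductive hypothesis; associativity of $\boxplus'$ repackages the result into the desired $n$-fold hyperoperation.

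With these tools, the main statement is immediate. In the strong case, start with
\[
0\ \in\ \boxplus_{k=1}^{d+1}(-1)^{k}\rchi(x_1,\dots,x_{d-1},y_k)\,\rchi(y_1,\dots,\widehat{y_k},\dots,y_{d+1}),
\]
apply $\varphi$ to both sides, and use $\varphi(0)=0'$, multiplicativity of $\varphi$, $\varphi(-1)=-1'$, and the $n$-fold inclusion above to conclude that $\varphi_*\rchi$ satisfies the strong GP relation for $\mathbb{H}'$. The weak case is identical with the three-term sum in place of the $(d+1)$-term sum; for the matroid condition on the support, note that since $\varphi$ does not kill non-zero elements, $\underline{\varphi_*\rchi}=\underline{\rchi}$, which is a matroid by assumption.

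The only non-trivial point is the inductive construction of the multi-term inclusion, since the definition of $\boxplus$ a priori only gives the two-term version; once associativity is used to make the $n$-fold hyperoperation well-defined, the rest of the argument is essentially functorial bookkeeping.
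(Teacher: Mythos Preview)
Your argument is correct: the key points are that a hyperfield morphism preserves units, sends $-1$ to $-1'$, and satisfies the $n$-fold inclusion $\varphi(\boxplus_i a_i)\subseteq\boxplus'_i\varphi(a_i)$ by induction, from which both the strong and the 3-term GP relations push forward immediately, while $\underline{\varphi_*\rchi}=\underline{\rchi}$ handles the support condition in the weak case. The paper itself does not give a proof of this proposition; it is simply quoted from \cite[Lemma~3.40]{BakerBowler:2019}, and your direct verification is exactly the standard one.
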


We collect a few essential examples here and refer the reader to \cite{BakerBowler:2019} for more, in particular their Examples~3.37 and~3.38, which show that in general, the notions of strong and weak matroids do not coincide.

\begin{example}
We omit the arithmetic of hyperfields in the list that can be directly deduced from the axioms.
\begin{itemize}
\item Every field is a hyperfield. 
A (strong or weak) matroid over a field is a linear subspace over the field (represented by its Pl\"{u}cker coordinates).
\item The {\em Krasner hyperfield} $\mathbb{K}=\{0,1\}$ with $1\boxplus 1=\{0,1\}$. 
A (strong or weak) matroid over $\mathbb{K}$ is a matroid in the usual sense.
\item The {\em sign hyperfield} $\mathbb{S}=\{0,1,-1\}$ with $1\boxplus 1=\{1\}, -1\boxplus-1=\{-1\}, 1\boxplus -1=\{0,1,-1\}$. 
A (strong or weak) matroid over $\mathbb{S}$ is an oriented matroid.
\item  The {\em $\min$-tropical hyperfield} $\mathbb{T}=\mathbb{R}\cup\{\infty\}$, where $0\in\mathbb{R}$ is the multiplicative identity and $\infty$ is the additive identity, $a\boxplus b=\min\{a,b\}$ if $a\neq b$, $a\boxplus a=[a,\infty]$, and $a\otimes b=a+b$. 
A (strong or weak) matroid over $\mathbb{T}$ is a {\em valuated matroid} (also know as a tropical linear space).
\item The {\em phase hyperfield} $\mathbb{P}=\{z\in\mathbb{C}:|z|=1\}\cup\{0\}$, where $w\boxplus z$ consists of the open minor arc between $w,z$ if $w\neq -z$ are non-zero, $w\boxplus(-w)=\{w,-w,0\}$, and $w\otimes z=wz$. 
Matroids over $\mathbb{P}$, known as {\em phase matroids}, were first considered by Anderson and Delucchi \cite{AndersonDelucchi:2012}. 
It is an important example because the notions of strong and weak matroids are different over $\mathbb{P}$.
\end{itemize}
\end{example}

\begin{remark}
The more general notion of {\em matroids over tracts} was considered in \cite{BakerBowler:2019}, and there is a straightforward generalization of our work at that generality. 
However, the exposition of such theory is more complicated and goes beyond the scope of this work, so we omit the details here.
\end{remark}

\subsection{Hyperfields with the Inflation Property} \label{sec:IP}

The following definition was probably first considered by Massouros \cite{Massouros:1991} under the name of {\em monogene hyperfields}, but we follow the terminology of Anderson \cite{Anderson:2019}.

\begin{definition}
A hyperfield $\mathbb{H}$ has the {\em inflation property} (IP) if $1\boxplus (-1)=\mathbb{H}$.
\end{definition}

The following proposition is \cite[Proposition 6.10]{Anderson:2019}.

\begin{proposition}\label{prop:IP}
The statements are equivalent for a hyperfield $\mathbb{H}$:
\begin{enumerate}
\item $\mathbb{H}$ has the IP.
\item $a\boxplus (-a)=\mathbb{H}$ for any $a\neq 0$.
\item $a\in a\boxplus b$ for any $a\neq 0$.
\item Suppose $0\in\boxplus_{i=1}^{k} a_i$ for some $a_1,\ldots,a_k$ that are not all zero. Then $0\in (\boxplus_{i=1}^{k} a_i)\boxplus a_{k+1}$ for any $a_{k+1}\in\mathbb{H}$.
\end{enumerate}
\end{proposition}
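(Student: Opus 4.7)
The plan is to establish the cycle $(1) \Leftrightarrow (2) \Leftrightarrow (3) \Rightarrow (4) \Rightarrow (1)$, using only the distributive law, the reversibility axiom $x \in y \boxplus z \Leftrightarrow z \in x \boxplus (-y)$, and associativity of the hyperoperation.

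The three easy steps are essentially formal. For $(1) \Leftrightarrow (2)$, distributivity gives $a \otimes (1 \boxplus (-1)) = a \boxplus (-a)$ for any $a \neq 0$; since multiplication by $a$ is a bijection of $\mathbb{H}$, (1) yields $a \boxplus (-a) = a \otimes \mathbb{H} = \mathbb{H}$, and the converse is the case $a = 1$. For $(2) \Leftrightarrow (3)$, reversibility makes the statement ``$a \in a \boxplus b$'' equivalent to ``$b \in a \boxplus (-a)$'', so (3) quantified over all $b$ is literally (2). For $(4) \Rightarrow (1)$, apply (4) with $k = 2$, $a_1 = 1$, $a_2 = -1$, and $a_3 = -b$ for an arbitrary $b \in \mathbb{H}$: the resulting $0 \in 1 \boxplus (-1) \boxplus (-b)$ together with reversibility forces $b \in 1 \boxplus (-1)$.

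The substantive implication is $(2),(3) \Rightarrow (4)$. Suppose $0 \in \boxplus_{i=1}^k a_i$ with some $a_j \neq 0$; by commutativity we may assume $j = 1$. Reversibility applied to $0 \in a_1 \boxplus (\boxplus_{i=2}^k a_i)$ produces an intermediate element of $\boxplus_{i=2}^k a_i$ equal to $-a_1$. Since $-a_1 \neq 0$, statement (3) gives $-a_1 \in -a_1 \boxplus a_{k+1}$ for any $a_{k+1}$, and hence $-a_1 \in \boxplus_{i=2}^{k+1} a_i$ because the latter is the union $\bigcup_{c \in \boxplus_{i=2}^k a_i} c \boxplus a_{k+1}$. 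One more use of reversibility yields $0 \in a_1 \boxplus (-a_1) \subseteq a_1 \boxplus \boxplus_{i=2}^{k+1} a_i = \boxplus_{i=1}^{k+1} a_i$, as required.

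The main subtlety is the careful set-theoretic reading of ``$-a_1 \in \boxplus_{i=2}^k a_i$'' as asserting the existence of an intermediate element in the iterated composition that is actually equal to $-a_1$; only then can one legitimately absorb an extra factor $a_{k+1}$ and re-apply reversibility at the outer level. Once this bookkeeping is made precise, no genuine induction on $k$ is needed, since (4) merely adds one term to an already null expression.
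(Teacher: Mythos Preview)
Your argument is correct. The cycle $(1)\Leftrightarrow(2)\Leftrightarrow(3)\Rightarrow(4)\Rightarrow(1)$ goes through exactly as you describe; the only minor quibble is that the final step ``$0\in a_1\boxplus(-a_1)$'' is not an application of reversibility but simply the defining property of the additive inverse.

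As for comparison with the paper: the paper does not give its own proof of this proposition at all. It is stated as \cite[Proposition~6.10]{Anderson:2019} and quoted without argument, so there is nothing to compare against. Your self-contained proof is therefore a genuine addition rather than a reworking of something already present.
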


We show that Theorem~\ref{thm:main} can be extended to (weak) matroids over hyperfields with the IP, this includes Theorem~\ref{thm:main} as a special case as $\mathbb{S}$ has the IP.

\begin{theorem}\label{thm:main_HF}
Let $(M_\sigma)$ be a polyhedral matching field. Let $\mathbb{H}$ be a hyperfield with the IP and $A$ be an $\mathbb{H}$-matrix with no zero entries. 
Then $\rchi:\ground^d\rightarrow\mathbb{H}$ given by $\sigma\mapsto \sign(M_\sigma)\otimes\bigotimes_{e\in M_\sigma}A_e$ is a weak matroid over $\mathbb{H}$.
\end{theorem}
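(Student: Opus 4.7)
The plan is to follow the pattern of Theorem~\ref{thm:main}: a local realizability step for each tree $T \in \Ical$, then a hyperfield analogue of the local-to-global principle Theorem~\ref{thm:local_global}. The inflation property of $\mathbb{H}$ is needed only for the second step. Since $A$ has no zero entries and $\mathbb{H}\setminus\{0\}$ is an abelian group under $\otimes$, every value $\rchi(\sigma)$ is nonzero, so $\underline{\rchi}$ is the uniform matroid $U_{d,n}$. All that remains is the hyperfield 3-term GP relation.

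\emph{Local step.} For each $T \in \Ical$ and each 3-term GP relation on $\Bcal(\Mcal(T))$ with products $E_1, E_2, E_3$, we show $0 \in E_1 \boxplus E_2 \boxplus E_3$. Because $T$ admits at most one $\rows$-saturating matching on each $d$-subset of $\ground$, every $\rchi_T(\sigma)$ is a single monomial in the entries of $A$ supported on $T$, and hence so is each $E_i$. Passing from $\mathbb{H}$ to the polynomial ring $\mathbb{Z}[y_e : e \in T]$ by $A_e \mapsto y_e$, the classical Plücker identity over a commutative ring reads $E_1 + E_2 + E_3 = 0$ with each $E_i$ a signed monomial or zero. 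Distinct monomials are linearly independent over $\mathbb{Z}$ and $\pm 1$-coefficients cannot sum to $0$ in threes, so the only possibilities are (i) all three $E_i$ vanish, or (ii) exactly one $E_i$ vanishes and the other two satisfy $E_a + E_b = 0$ as monomials. Translating back to $\mathbb{H}$, the relation $0 \in E_1 \boxplus E_2 \boxplus E_3$ follows in both cases from the basic hyperfield axiom $0 \in a \boxplus (-a)$. No use is made of IP here.

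\emph{Gluing step.} The hyperfield analogue of Theorem~\ref{thm:local_global} to be established is: if $M_1,\dots,M_k$ form a matroid subdivision of $M$ and each restriction $\rchi_i$ is a weak matroid over an IP-hyperfield $\mathbb{H}$, then so is $\rchi$. For any 3-term GP relation with products $E_1, E_2, E_3$, form the face $F_{\bf c}$ of the matroid polytope of $M$ spanned by the bases involved, exactly as in the proof of Theorem~\ref{thm:local_global}. If $F_{\bf c}$ is not an octahedron, or is an octahedron left unsubdivided, some cell $M_i$ already contains $F_{\bf c}$ and its local relation matches the global one. If $F_{\bf c}$ is an octahedron split into two pyramids, each pyramid contains exactly two of the three antipodal pairs fully (plus one apex of the third pair), so choosing $M_i$ to contain one such pyramid yields a local relation with exactly one of $E_1, E_2, E_3$ forced to zero; Proposition~\ref{prop:IP}(4) then reinserts the missing term without disturbing the containment of $0$. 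Combined with the local step and the matroid subdivision of $U_{d,n}$ by transversal matroids $\Mcal(T)$ furnished by Theorem~\ref{thm:matroid+subdivision+from+triangulation}, this proves the desired 3-term hyperfield GP relation for $\rchi$.

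The main obstacle is justifying the local step over a multi-valued hyperfield without any cancellation laws beyond the basic axioms. What rescues the argument is that tree-supported matrices have at most one nonzero term in every $d \times d$ minor, so the polynomial form of the Plücker identity collapses to a simple monomial coincidence; IP is then needed only in the gluing step, precisely because an octahedral subdivision forces one local relation to lose one of the three terms, and the absorbing property of Proposition~\ref{prop:IP}(4) is exactly what is required to compensate.
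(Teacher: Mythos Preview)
Your proof is correct and follows essentially the same approach as the paper: the local step treats the tree-supported entries as indeterminates so that each $3$-term Pl\"ucker identity collapses to either $0=0$ or a two-term cancellation of identical monomials, and the gluing step reproduces the polyhedral case analysis of Theorem~\ref{thm:local_global} with Proposition~\ref{prop:IP}(4) supplying the missing term when an octahedron face is split. The only cosmetic difference is that the paper phrases the gluing step contrapositively (a violation restricts to a violation), whereas you argue directly; the content is identical.
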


\begin{proof}
We show that the proof in Section~\ref{sec:main} can be adapted to this setting. 
First of all, Theorem~\ref{thm:matroid+subdivision+from+triangulation} is a pure polyhedral geometric statement which requires no modification.

Following the argument of Lemma~\ref{lem:TM_real}, each maximal minor of $A_T$ is either 0 or consists of one non-zero term, so they are all single valued and altogether realize $\rchi_T$.
Now we claim that the maximal minors of an $\mathbb{H}$-matrix whose support is a spanning tree induce a weak matroid over $\mathbb{H}$. 
Treating the non-zero entries as algebraically independent indeterminates and denoting such matrix as $\tilde{A}$, we still have the $3$-term GP relation:
$\det(\tilde{A}_{|x_1,x_2,X})\det(\tilde{A}_{|y_1,y_2,X})+\det(\tilde{A}_{|x_1,y_1,X})\det(\tilde{A}_{|y_2,x_2,X})+\det(\tilde{A}_{|x_1,y_2,X})\det(\tilde{A}_{|x_2,y_1,X})=0$. 
Since each determinant is either 0 or a monomial, either all three terms are zeros, or one term is zero while the other two are the same monomial with coefficient $1$ and $-1$, respectively. 
Replacing the indeterminates by values from $\mathbb{H}$ will still preserve the $3$-term GP relation over $\mathbb{H}$.

Finally, the only non-polyhedral geometric argument in the proof of Theorem~\ref{thm:local_global} is to show that the restriction of a violation of the 3-term GP relation (in the ambient matroid polytope) to a subpolytope is still a violation.
The only non-trivial restriction is from an octahedron face to a pyramid/square cell.
In the language of hyperfields this means that if $0\not\in a\boxplus b\boxplus c$ with $a,b,c\neq 0$, then $0$ is also not in the sum if we set one of the terms to zero.
But this is Property~(4) of Proposition~\ref{prop:IP} concerning hyperfields with the IP.
\end{proof}

\begin{example}
We note that the statement of Theorem~\ref{thm:main_HF} actually characterizes hyperfields with the IP. 
Suppose $\mathbb{H}$ does not have the IP, pick $a\in\mathbb{H}$ such that $-a\not\in 1\boxplus (-1)$, i.e., $0\not\in 1\boxplus (-1)\boxplus a$. 
Take $n=4, d=2$ and the diagonal matching field of the corresponding size depicted in Figure~\ref {fig:four+two+matching+field}. 
Also take the $\mathbb{H}$-matrix to be

$$
\begin{pmatrix}
1 & 1 & 1 & 1\\
1 & a & 1 & 1
\end{pmatrix}.
$$

The induced $\rchi$ is not a (weak) matroid over $\mathbb{H}$.
\end{example}

We list several examples and constructions of hyperfields that have the IP, besides $\mathbb{K}$ and $\mathbb{S}$.

\begin{itemize}
\item The {\em tropical phase hyperfield} $\Phi=\{z\in\mathbb{C}:|z|=1\}\cup\{0\}$, where $w\boxplus z$ consists of the closed minor arc between $w,z$ if they are both non-zero and not antipodal of each other, $w\boxplus(-w)=\Phi$ if $w\neq 0$, and $w\otimes z=wz$.
\item Let $(G,\otimes,1)$ be an arbitrary abelian group. Introduce a new element $0$ and define the hyperoperation $\boxplus$ as $a\boxplus b=\{a,b\}$ if $a,b\in G$ and $a\neq b$, $a\boxplus a=G\cup\{0\}$ if $a\in G$, and $a\boxplus 0=0\boxplus a=\{a\}$.
Then $(G\cup\{0\},\boxplus, \otimes, 0, 1)$ is a hyperfield with the IP. 
Such hyperfields are considered in \cite{Massouros:1991}.
\item Again start with an arbitrary abelian group $(G,\otimes,1)$ together with a new element $0$.
Define $\boxplus'$ as $a\boxplus' b=G$ if $a,b\in G$ and $a\neq b$, $a\boxplus' a=G\cup \{0\}$, and $a\boxplus' 0=0\boxplus' a=\{a\}$.
Then $(G\cup\{0\},\boxplus', \otimes, 0, 1)$ is a hyperfield with the IP. 
Such hyperfields are called {\em weak hyperfields} in \cite{BakerBowler:2019}.
\item Let $\mathbb{H}=(\mathbb{H},\boxplus,\otimes,0,1)$ be a hyperfield.
Define a new hyperfield $\widetilde{\mathbb H}$ with the same ground set and multiplicative structure as $\mathbb{H}$, but with the new hyperoperation $\widetilde{\boxplus}$ where $a\widetilde{\boxplus} b=(a\boxplus b)\cup\{a,b\}$ for $a,b\neq 0$ and $a\neq -b$, and $a\widetilde{\boxplus} (-a)=\mathbb{H}$ for non-zero $a$. 
See \cite{Massouros:1991} for a proof that it is indeed a hyperfield.
\end{itemize}

We further elaborate our last example above. 
We suggest the name {\em canonical inflation} for the construction of $\widetilde{\mathbb H}$ from $\mathbb{H}$: in view of Proposition~\ref{prop:IP}, this is the ``minimum'' change needed to make a hyperfield to become one that has the IP. 
More rigorously, the (set-theoretic) identity map $\iota:\mathbb{H}\rightarrow\widetilde{\mathbb H}$ is a hyperfield morphism and we have the following universal property:

\begin{proposition} \label{prop:IP_universal}
Let $\varphi:\mathbb{H}\rightarrow\mathbb{H}'$ be a hyperfield morphism with $\mathbb{H}'$ having the IP.
Then we have the factorization $\mathbb{H}\xrightarrow{\iota}\widetilde{\mathbb H}\xrightarrow{\varphi}\mathbb{H}'$ of hyperfield morphisms.
\end{proposition}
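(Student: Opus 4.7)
The plan is to observe that the factorization is essentially trivial set-theoretically, since $\iota$ is the identity on the underlying set of $\mathbb{H}$, and the underlying set and multiplicative structure of $\widetilde{\mathbb{H}}$ coincide with those of $\mathbb{H}$. So the ``new'' morphism $\varphi \colon \widetilde{\mathbb{H}} \to \mathbb{H}'$ is literally the same function as the original $\varphi$. The entire content of the proposition is therefore to verify that this function, now regarded as having domain $\widetilde{\mathbb{H}}$, still satisfies the hyperfield morphism axioms; the only non-trivial axiom to check is the compatibility with hyperaddition, namely $\varphi(x \widetilde{\boxplus} y) \subseteq \varphi(x) \boxplus' \varphi(y)$.

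First I would record two routine preliminaries. Since $\varphi$ is multiplicative and fixes $1$, its restriction to nonzero elements is a group homomorphism, so $\varphi(x) = 0'$ iff $x = 0$, and $\varphi(-x) = -\varphi(x)$ (the latter by applying $\varphi$ to $0 \in 1 \boxplus (-1)$ and using uniqueness of additive inverses in $\mathbb{H}'$). I would also quickly verify that $\iota \colon \mathbb{H} \to \widetilde{\mathbb{H}}$ is itself a hyperfield morphism, which is immediate from $x \boxplus y \subseteq x \widetilde{\boxplus} y$ in all cases.

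Then I would split the verification of $\varphi(x \widetilde{\boxplus} y) \subseteq \varphi(x) \boxplus' \varphi(y)$ into three cases. If $x = 0$ or $y = 0$, both sides collapse to the obvious singleton. If $x, y \neq 0$ with $x \neq -y$, we have $x \widetilde{\boxplus} y = (x \boxplus y) \cup \{x, y\}$. The image of $x \boxplus y$ lies in $\varphi(x) \boxplus' \varphi(y)$ because $\varphi$ was already a morphism $\mathbb{H} \to \mathbb{H}'$. For the two extra points, observe that $\varphi(x), \varphi(y) \neq 0'$, so Proposition~\ref{prop:IP}(3) applied in $\mathbb{H}'$ gives $\varphi(x), \varphi(y) \in \varphi(x) \boxplus' \varphi(y)$. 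Finally, if $x = -y \neq 0$, then $x \widetilde{\boxplus} (-x) = \mathbb{H}$, so we need $\mathbb{H}' \supseteq \varphi(\mathbb{H}) \subseteq \varphi(x) \boxplus' (-\varphi(x))$; but the right-hand side equals $\mathbb{H}'$ by Proposition~\ref{prop:IP}(2), since $\mathbb{H}'$ has the IP and $\varphi(x) \neq 0'$.

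The statement contains no real obstacle; everything is driven by the equivalent reformulations of the IP in Proposition~\ref{prop:IP}. The only point that requires a moment of care is ruling out the possibility that $\varphi$ sends a nonzero element to $0'$, which is what makes Proposition~\ref{prop:IP}(3) applicable to $\varphi(x)$ and $\varphi(y)$. Once that is handled, the case analysis above closes the proof.
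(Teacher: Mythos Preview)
Your proof is correct and follows essentially the same approach as the paper: both identify that the only non-trivial check is additivity of $\varphi\colon\widetilde{\mathbb H}\to\mathbb H'$, and both handle the cases $a\neq -b$ (using Proposition~\ref{prop:IP}(3) together with the original morphism property) and $a=-b$ (using Proposition~\ref{prop:IP}(2)) in the same way. Your version is slightly more explicit in recording that $\varphi$ preserves nonzero elements and negatives, which the paper uses tacitly.
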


\begin{proof}
The only non-trivial part is that $\widetilde{\mathbb H}\xrightarrow{\varphi}\mathbb{H}'$ preserves addition of two non-zero values.
Denote by $\boxplus, \widetilde{\boxplus}, \boxplus'$ the addition hyperoperators of $\mathbb{H}, \widetilde{\mathbb H}, \mathbb{H}'$, respectively.
Suppose $a\neq -b$ are non-zero. Then $\varphi(a)\boxplus'\varphi(b)$ contains $\varphi(a\boxplus b)$ (as $\mathbb {H}\xrightarrow{\varphi}\mathbb{H}'$ is a hyperfield morphism) as well as $\{\varphi(a), \varphi(b)\}$ (as $\mathbb{H}'$ has the IP), so $\varphi(a)\boxplus'\varphi(b)$ contains $\varphi(a\widetilde{\boxplus} b)$.
For $a\neq 0$, we have $\varphi(a\widetilde{\boxplus} (-a))=\varphi(\mathbb{H})\subset\mathbb{H}'=\varphi(a)\boxplus' \varphi(-a)$.
\end{proof}

\begin{corollary} \label{coro:main_HF}
Let $(M_\sigma)$ be a polyhedral matching field. Let $\mathbb{H}$ be an arbitrary hyperfield and $A$ be an $\mathbb{H}$-matrix with no zero entries. 
Then $\rchi:\ground^d\rightarrow\mathbb{H}$ given by $\sigma\mapsto \sign(M_\sigma)\otimes\bigotimes_{e\in M_\sigma}A_e$ is a weak matroid over the canonical inflation $\widetilde{\mathbb H}$ of $\mathbb{H}$.
In general, given a hyperfield morphism  $\varphi:\mathbb{H}\rightarrow\mathbb{H}'$ with $\mathbb{H}'$ having the IP, $\varphi_*\rchi$ is a weak matroid over $\mathbb{H}'$.
\end{corollary}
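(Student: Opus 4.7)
The plan is to obtain Corollary~\ref{coro:main_HF} as a routine consequence of Theorem~\ref{thm:main_HF}, Proposition~\ref{prop:functorial}, and Proposition~\ref{prop:IP_universal}, by funneling everything through the canonical inflation $\widetilde{\mathbb{H}}$.

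First I would handle the unnamed special case in the first sentence: that $\rchi$ is a weak matroid over $\widetilde{\mathbb H}$. Since $\widetilde{\mathbb H}$ has the same underlying set, zero, unit, and multiplication as $\mathbb H$ (only the hyperaddition $\widetilde{\boxplus}$ differs from $\boxplus$), the formula $\sigma \mapsto \sign(M_\sigma)\otimes\bigotimes_{e\in M_\sigma}A_e$ produces literally the same function whether interpreted in $\mathbb H$ or in $\widetilde{\mathbb H}$. The matrix $A$, having no zero entries as an $\mathbb H$-matrix, has no zero entries as an $\widetilde{\mathbb H}$-matrix either, and $\widetilde{\mathbb H}$ has the IP by construction. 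So Theorem~\ref{thm:main_HF} applied to $(M_\sigma)$, $\widetilde{\mathbb H}$, and $A$ gives that $\rchi$ is a weak matroid over $\widetilde{\mathbb H}$.

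Next I would deduce the general statement from Proposition~\ref{prop:IP_universal}: any hyperfield morphism $\varphi:\mathbb{H}\rightarrow\mathbb{H}'$ with $\mathbb{H}'$ having the IP factors through the canonical inflation as $\mathbb{H}\xrightarrow{\iota}\widetilde{\mathbb H}\xrightarrow{\varphi}\mathbb{H}'$. Because $\iota$ is the identity on underlying sets, $\iota_*\rchi$ and $\rchi$ are the same function, so the previous paragraph says $\iota_*\rchi$ is a weak matroid over $\widetilde{\mathbb H}$. Applying Proposition~\ref{prop:functorial} to the hyperfield morphism $\widetilde{\mathbb H}\xrightarrow{\varphi}\mathbb{H}'$ then shows that $\varphi_*(\iota_*\rchi)=(\varphi\circ\iota)_*\rchi=\varphi_*\rchi$ is a weak matroid over $\mathbb H'$, as desired.

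There is essentially no obstacle in the argument itself; the real content lies in Theorem~\ref{thm:main_HF} and in the construction of $\widetilde{\mathbb H}$ with its universal property. The only point requiring a moment of care is the verification that $\iota_*\rchi$ coincides with $\rchi$ as a set-theoretic function, which in turn reduces to the observation that $\mathbb H$ and $\widetilde{\mathbb H}$ share the same multiplicative monoid and that the defining formula for $\rchi$ involves only this multiplicative structure together with the sign $\pm 1$ of a permutation.
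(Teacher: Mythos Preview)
Your proposal is correct and follows essentially the same route as the paper: observe that the definition of $\rchi$ uses only the multiplicative structure, so one may reinterpret $A$ as a $\widetilde{\mathbb H}$-matrix and invoke Theorem~\ref{thm:main_HF}, and then derive the general statement from Proposition~\ref{prop:IP_universal} together with Proposition~\ref{prop:functorial}. Your write-up merely spells out in more detail what the paper states in two sentences.
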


\begin{proof}
Note that $\rchi$ does not involve the additive structure of $\mathbb{H}$ (respectively  $\widetilde{\mathbb H}$), so we can simply interpret $A$ as a $\widetilde{\mathbb H}$-matrix and apply Theorem~\ref{thm:main_HF} to $\rchi$.
The general statement follows from Proposition~\ref{prop:IP_universal} and Proposition~\ref{prop:functorial}.
\end{proof}

\begin{example} \label{ex:Phi_matroids}
Let $ph:\mathbb{C}\rightarrow\mathbb{P}$ be given by $z\mapsto z/|z|$ if $z\neq 0$ and $0\mapsto 0$. 
Then we have the hyperfield morphisms $\mathbb{C}\xrightarrow{ph}\mathbb{P}\xrightarrow{\iota}\Phi$, where the second map is the canonical inflation. 
A matroid over $\mathbb{P}$ or $\Phi$ is {\em $\mathbb{C}$-realizable} if it is the pushforward of some matroid over $\mathbb{C}$. 
Consider the $(2,4)$-diagonal matching field together with the matrix

$$
\begin{pmatrix}
1 & 1 & 1 & 1\\
1 & i & 1 & 1
\end{pmatrix}.
$$

The induced function $\rchi:\binom{[4]}{2}\rightarrow\{z\in\mathbb{C}:|z|=1\}$  takes the (ordered) basis $12$ to $i$ and every other basis to $1$. 
It is easy to check that $\rchi$ is a (weak) matroid over $\Phi$ but not over $\mathbb{P}$, nor it is $\mathbb{C}$-realizable. 
However, with the natural Euclidean topology of $\mathbb{C},\mathbb{P},\Phi$ and their corresponding Grassmannians (viewed as subsets of $\mathbb{C}P^5$), $\rchi$ is a limit point of the Grassmannians ${\rm Gr}_{\mathbb P}(2,4)$ as well as the subset of $\mathbb{C}$-realizable matroids: 
we can approximate $\rchi$ by $ph_*\rchi_R$ as $R\rightarrow\infty$, where $\rchi_R$ is the matroid over $\mathbb{C}$ realized by the matrix

$$
\begin{pmatrix}
e^R & e^R & e^R & e^R\\
1 & ie^R & e^{2R} & e^{3R}
\end{pmatrix}.
$$

This suggests the potential application of matroids induced by matching fields in the general theory of hyperfields and their Grassmannians (we refer the reader to \cite{AndersonDavis:2019} for further discussion).

\end{example}

\section{Conclusion} \label{sec:conclusion}


With a new point of view and machinery, we extend the connections between several prominent objects in matroid theory, discrete geometry, and tropical geometry beyond the ``realizable'' territory.
We also initiate the study of the interaction between matroid subdivisions and signs, or more generally, hyperfields.
As mentioned throughout in our paper, many of these ideas are interesting in its own right and could potentially be applied to other settings.
Recall that the original motivation of matching fields comes from combinatorial commutative algebra and the corresponding algebraic geometry, it is also interesting to see if something can be said in these areas.
We end with a few more open problems, focusing on aspects that were not fully discussed in the main content.

\smallskip

As already mentioned in the introduction, our work adds a new piece to the connection between the quest for a strongly polynomial algorithm for linear programming and a (weakly) polynomial algorithm for mean payoff games.

Many constructions of (sub)exponential instances for pivot rules for the simplex method are derived from parity or mean payoff games, respectively~\cite{Friedmann:2011}.
This was achieved by relating strategy iteration for these games with pivoting in the simplex method.
On the positive side, one can solve mean payoff games using their equivalence with tropical linear programming~\cite{AkianGaubertGuterman:2012} by a tropicalized version of the simplex method~\cite{ABGJ-Simplex:A}. 
Our work gives the manifestation of this correspondence on the level of oriented matroids and matching fields, as further discussed in Remark~\ref{rem:connection+tropical+simplex}. 

An indicator on the hardness of mean payoff games comes from the richness of the oriented matroids arising for coherent matching fields.
If only a subclass of uniform oriented matroids arises from coherent matching fields, this would exhibit a deep structural difference between linear programming and tropical linear programming. 
On the other hand, if all realizable uniform oriented matroids arise from coherent matching fields, then a combinatorial algorithm for tropical linear programming would already solve linear programming efficiently. 

\begin{question}
  Which oriented matroids are realizable from a coherent matching field or a polyhedral matching field?
\end{question}

This adds well to the problems posed in Section~\ref{sec:OM_MF} and the second part of the question goes even further in the direction to get tools for representing not-necessarily realizable oriented matroids.
This is already interesting for pseudoline arrangements which occur frequently in combinatorics.
In this case, the question reduces to the study of fine mixed subdivisions of dilated triangles $n \ssimplex_{2}$, which are substantially better understood than for arbitrary $d > 3$, see~\cite{ArdilaBilley:2007, Santos:2005}.

\smallskip

The space of all matrices which give rise to a prescribed coherent matching field is a full-dimensional cone of the normal fan of the Newton polytope of the product of all maximal minors of a matrix of indeterminates~\cite{SturmfelsZelevinsky:1993}. 
Hence, the space of matrices which induce the same oriented matroid by the construction of Theorem~\ref{thm:main+theorem+intro} is a union $U$ of cones.
On the other hand, the realization space of an oriented matroid is in general a very complicated object~\cite{Mnev:1988}.
One can consider such a realization space over Puiseux series and take its tropicalization $V$, which is also a polyhedral complex; see~\cite{Joswig:2020} for more on tropicalization. 

\begin{question}
  What is the relation of the two sets $U$ and $V$? 
\end{question}

An answer to the latter question might give a new approach for the understanding of realization spaces.
This is even interesting for the case $d = 3$, as already uniform oriented matroids of rank $3$ have arbitrarily complicated realization spaces. 

\smallskip

Our proof of Theorem~\ref{thm:local_global}, as well as many results in the literature on matroid subdivisions, relies crucially on the reduction to the 3-term GP relation, which is a local condition and rather easy to check.
This is only good enough to guarantee a weak matroid in Theorem~\ref{thm:main_HF}, and the conclusion poses the obvious question of whether we can say something stronger.

\begin{question}
Is the function $\rchi$ in Theorem~\ref{thm:main_HF} always a strong matroid over $\mathbb{H}$ as well?
\end{question}

If one wants to apply a similar polyhedral approach to this problem, it is likely that one has to analyze the global structure of matroid subdivisions.
On the other hand, we do not rule out the possibility that counterexamples exist over {\em imperfect} hyperfields (with the IP).
In any case, this problem might provide a polyhedral angle to understand the differences between strong and weak matroids.

\medskip

{\sc Acknowledgement.} The first author was funded by the Deutsche Forschungsgemeinschaft (DFG, German Research Foundation) under Germany's Excellence Strategy -- The Berlin Mathematics Research Center MATH+(EXC-2046/1, project ID: 390685689).
The second author was supported by the European Research Council (ERC) Starting Grant ScaleOpt, No.~757481.
He also thanks Xavier Allamigeon and Mateusz Skomra for inspiring discussions and Ben Smith for helpful comments.
The third author was supported by Croucher Fellowship for Postdoctoral Research and Netherlands Organisation for Scientific Research Vici grant 639.033.514 during his affiliation to Brown University and University of Bern, respectively.
The first and third author both thank Chris Eppolito for references and discussion on hyperfields.
The second and third author both thank the {\em Tropical Geometry, Amoebas, and Polyhedra} semester program at Institut Mittag-Leffler for the introduction that started the project.
All three authors thank Laura Anderson and Josephine Yu for their feedback on an earlier draft of this paper.

\bibliographystyle{amsplain}
\bibliography{pmf}

\end{document}